\documentclass[11pt]{article}

\usepackage{amsmath,amssymb,amsthm,mathtools}
\usepackage{geometry}
\usepackage{enumitem}
\usepackage{booktabs}
\usepackage{graphicx}
\usepackage{tikz}
\usepackage{float}
\usepackage{diagbox}
\usepackage[T1]{fontenc}
\usepackage{lmodern}
\usepackage{microtype}
\usepackage{hyperref}
\usepackage{tabularx}
\hypersetup{colorlinks=true, linkcolor=blue, citecolor=blue, urlcolor=blue,
  bookmarksdepth=3}

\newtheorem{theorem}{Theorem}
\newtheorem{lemma}{Lemma}
\newtheorem{corollary}{Corollary}
\newtheorem{proposition}{Proposition}
\newtheorem{definition}{Definition}
\newtheorem{remark}{Remark}
\newtheorem{example}{Example}

\newcommand{\Zm}{\mathbb Z_m}

\newcommand{\bulk}{\mathrm{bulk}}   % generic (non-stall) step; see \S\ref{app:routeE-cycle} for definition

\title{Hamilton decompositions of the directed 3-torus: a return-map and odometer view}
\author{Sanghyun Park, \\ Yonsei University}
\date{March 2026}

\begin{document}
\maketitle

\begin{abstract}
We prove that the directed three-dimensional torus
$D_3(m)=\mathrm{Cay}((\mathbb Z_m)^3,\{e_1,e_2,e_3\})\cong \vec C_m\square \vec C_m\square \vec C_m$,
equivalently the Cartesian product of three directed $m$-cycles,
admits a decomposition into three arc-disjoint directed Hamilton cycles for every integer $m\ge 3$.

The proof is organized around return maps rather than the full $m^3$-vertex dynamics.
Since each step increases the layer function $S=i+j+k\pmod{m}$ by $1$, Hamiltonicity reduces to the $m$-step return maps on the layer-zero plane $\{S=0\}$ of size $m^2$.
These return maps are modeled on a two-dimensional odometer, with one coordinate acting as a clock and the other recording the carry.

For odd $m$, five Kempe swaps of the canonical coloring produce return maps explicitly affine-conjugate to the odometer.
For even $m$, a sign-product invariant shows that no Kempe-from-canonical construction can work, so we build a different explicit low-layer assignment whose return maps are finite-defect perturbations of the same clock-and-carry mechanism.
A further first-return reduction converts the remaining closure into a finite splice analysis; the boundary case $m=4$ is handled by a direct finite witness.
The construction has been formalized in Lean~4.
\end{abstract}

\section{Introduction}

Fix an integer $m\ge 3$ and write $\mathbb Z_m=\mathbb Z/m\mathbb Z$.
Let $V=(\mathbb Z_m)^3$ and let $D_3(m)$ be the directed Cayley graph with arc set
\[
(i,j,k)\to(i+1,j,k),\qquad (i,j,k)\to(i,j+1,k),\qquad (i,j,k)\to(i,j,k+1),
\]
with all arithmetic modulo $m$.
Equivalently,
\[
D_3(m)=\mathrm{Cay}((\mathbb Z_m)^3,\{e_1,e_2,e_3\})\cong \vec C_m\square \vec C_m\square \vec C_m.
\]
A \emph{directed Hamilton decomposition} of $D_3(m)$ is a partition of its $3m^3$ arcs into three arc-disjoint directed Hamilton cycles.

Hamilton decompositions of products of cycles are classical on the undirected side.
Kotzig proved that $C_m\square C_n$ decomposes into Hamilton cycles~\cite{Kotzig1973}, and Foregger extended this to products of several cycles~\cite{Foregger1978}; see also the surveys of Bermond~\cite{Bermond1978} and Alspach, Bermond, and Sotteau~\cite{AlspachBermondSotteau1990}, together with Stong's work on graph products~\cite{Stong1991}.
For directed products, Trotter and Erd\H{o}s characterized when the Cartesian product of two directed cycles is Hamiltonian~\cite{TrotterErdos1978}, and Curran and Witte showed that the product of any three or more nontrivial directed cycles has a directed Hamilton cycle~\cite{CurranWitte1985}.
On the decomposition side, Keating determined when the product of two directed cycles decomposes into two directed Hamilton cycles~\cite{Keating1985}, while Bogdanowicz studied decompositions into directed cycles of prescribed common length and isolated Hamilton cycles under explicit arithmetic hypotheses~\cite{Bogdanowicz2017,Bogdanowicz2020}.
Darijani, Miraftab, and Morris resolved the corresponding problem for arc-disjoint Hamilton \emph{paths} in most cases and identified the three-factor product as the remaining boundary in that setting~\cite{DarijaniMiraftabMorris2022}.
As an abelian Cayley digraph, $D_3(m)$ also lies in the broader Hamiltonicity tradition surveyed by Witte--Gallian~\cite{WitteGallian1984} and Curran--Gallian~\cite{CurranGallian1996}.
The symmetric three-generator torus $D_3(m)$ therefore sits at a natural boundary between existing Hamiltonicity theorems, two-factor decomposition results, and the remaining three-factor decomposition problem.
\paragraph{Contemporaneous and independent work.}
During the preparation of this manuscript,
Knuth's note \emph{Claude's Cycles}~\cite{Knuth2026} gave a different construction for odd $m$; the revised March 2026 version also records independent solutions of the even case.
Independently, Aquino-Michaels~\cite{AquinoMichaels2026} developed another program for the same problem, giving different odd- and even-case constructions together with computational exploration.
The present paper gives a self-contained proof for all $m\ge 3$, based on a Kempe sign-product invariant and a unified return-map/odometer framework, and includes a Lean~4 formalization.

The proof is organized around a return-map viewpoint.
Define the layer function
\[
S(i,j,k)=i+j+k\pmod m.
\]
Every legal arc increases $S$ by $1$.
Thus, for any color, an orbit returns to the plane
\[
P_0=\{S=0\}
\]
exactly once every $m$ steps.
Hamiltonicity on the full vertex set $V$ therefore reduces to the cycle structure of an $m$-step return map on $P_0$, a set of size $m^2$.
The model dynamics on that section are the standard odometer
\[
O(u,v)=(u+1,\ v+\mathbf 1_{u=0}),
\]
whose first coordinate is a clock and whose second coordinate records the carry when the clock wraps.
The odd and even arguments are best understood as two different ways of realizing this same clock-and-carry mechanism.

The proof contributes three distinct ingredients.
\begin{enumerate}[label=\textup{(\roman*)},leftmargin=2.5em]
\item a \emph{sign-product invariant} for Kempe swaps, which yields a parity obstruction showing that even $m$ cannot be reached from the canonical coloring by Kempe swaps alone;
\item an explicit \emph{odd-case five-swap witness} whose three return maps on $P_0$ are all affine-conjugate to the standard odometer;
\item an explicit \emph{even-case Route~E construction} on the three lowest layers, whose return maps become finite-defect odometer dynamics after adapted coordinate changes and one further first-return reduction to lane transversals.
\end{enumerate}

The odd case is therefore an exact affine odometer theorem.
The even case is a repair theorem: after the parity obstruction blocks the canonical Kempe route, we rebuild a primitive clock and carry by modifying only the low layers.
The boundary case $m=4$ is treated separately by a finite witness.

Our main result is the following.

\begin{theorem}\label{thm:main-all}
For every integer $m\ge 3$, the arc set of $D_3(m)$ decomposes into three directed Hamilton cycles.
\end{theorem}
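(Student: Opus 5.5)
We reduce the problem to three permutations of the layer-zero plane, split by the parity of $m$, and settle $m=4$ by a finite witness. A decomposition is a \emph{coloring}: a map $c:V\to S_3$ assigning to each vertex $x$ a bijection between colors $\{0,1,2\}$ and generators $\{e_1,e_2,e_3\}$. Color $a$ then leaves $x$ along $e_{c(x)(a)}$. Each color class is a functional digraph $f_a(x)=x+e_{c(x)(a)}$, and every vertex has in-degree one in that class automatically, since the three arcs entering $x$ come from $x-e_1,x-e_2,x-e_3$ and carry distinct colors only if $c$ is chosen consistently. To avoid checking this, I take $c$ to depend only on the layer $S(x)$ and on a few coordinates, in a way that makes each $f_a$ a bijection. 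Because $S(f_a(x))=S(x)+1$, the class $f_a$ is a single Hamilton cycle exactly when its $m$-step return map $R_a=f_a^m|_{P_0}$ is one $m^2$-cycle on $P_0$. I coordinatize $P_0$ by $(i,j)$, setting $k=-i-j$.

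\textbf{Odd $m$.} Start from the canonical coloring $c(x)(a)=e_{a+1}$. Under it each $R_a$ is a translation by $m e_{a+1}$, which is trivial, so it has $m^2$ fixed points. Next apply Kempe-type swaps: on a set of layers, exchange the generators used by two colors. Apply these on the layers $S=0,1,2$, with the swap conditioned on one coordinate being $0$ (or $-1$), which is the ``carry'' condition. The target is that $R_a$ becomes a clock $u\mapsto u+1$ in some affine coordinate $u$, with an extra increment of $v$ when $u$ wraps. Concretely, I would choose five swaps so that each $R_a$ has the form $(i,j)\mapsto(i+\alpha,\,j+\beta+\gamma\mathbf 1_{i=i_0})$ with $\alpha,\gamma$ units mod $m$. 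Oddness of $m$ is what makes $\alpha=2$ a unit, and this is the source of the parity condition. A map of this form is affine-conjugate to the odometer $O$, which is a single $m^2$-cycle. The verification is a direct composition over the $m$ layers: only the three modified layers contribute, and every other layer is a pure translation.

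\textbf{Even $m$.} A parity argument shows that no Kempe modification of the canonical coloring can work. Each swap multiplies the signs of the three return permutations in a controlled way. A single $m^2$-cycle has sign $(-1)^{m^2-1}=-1$ when $m$ is even, while the invariant forces the product to stay at its canonical value. I would therefore define a non-Kempe assignment on the low layers: allow the permutation $c(x)$ on layers $0,1,2$ to depend on whether $i$ or $j$ lies in $\{0,-1\}$, while keeping each $f_a$ bijective. Chosen this way, each $R_a$ agrees with an odometer-like map $(u,v)\mapsto(u+1,v+\mathbf 1_{u=0})$ except on $O(1)$ lanes. To show $R_a$ is one cycle, take a first return to a transversal, such as the line $u=0$. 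The induced map on that line is a translation $v\mapsto v+\delta$ away from a bounded number of defect points. The remaining step is a finite splice analysis: the defects reconnect the $\gcd$-orbits into one cycle, and the lengths add up to $m^2$.

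\textbf{Main obstacle and small cases.} I expect the main obstacle to be the even case, specifically finding the low-layer assignment and proving that the transversal map is a single cycle uniformly for all even $m\ge 6$. What I would try first is to make the induced map equal to $v\mapsto v+1$ with exactly one transposed pair. I would then compute the order of each segment explicitly, as a function of $m$, to confirm there is one cycle. The case $m=4$ is likely degenerate, since the defect lanes collide, so I would settle it by exhibiting an explicit coloring found by computer and verified mechanically. Combining the odd case, the even case for $m\ge 6$, and the $m=4$ witness covers all $m\ge3$ and proves Theorem~\ref{thm:main-all}.
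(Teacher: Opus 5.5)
\textbf{Gap: the constructions that constitute the proof are missing.} Your outline follows the paper's architecture: the return map on $P_0$; affine odometers from five Kempe swaps for odd $m$; the sign-product barrier; a low-layer non-Kempe repair with a second first return to a lane transversal and a finite splice for even $m\ge 6$; and a computer witness for $m=4$. But it leaves out the objects the proof actually consists of. The theorem is an existence statement. For even $m\ge 6$ you never define the low-layer assignment, and you never compute a return map $R_a$, a lane map, or a return-time sum. You flag this yourself as the open obstacle, and it is the entire even case.

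The paper needs a specific rule: default triple $(1,2,0)$ on layer $0$, overridden along the lines $j=1$, $k=0$, $i=0$ with endpoint corrections and isolated points; layer $1$ split on $i=0$; layer $2$ split on $j=0$. From that rule it derives closed forms for $R_0,R_1,R_2$, explicit lane maps $T_c$ with $\sum\rho_c=m^2$, and splice permutations that are single cycles. An unspecified ``natural'' choice should not be expected to work uniformly. The paper's own primary geometry gives color $1$ the bulk lane rule $x\mapsto x+3$, and for $m\equiv 4\pmod 6$ this closes into three residue-$3$ cycles. An extra repair family on $i=1$ is then required, and deleting it makes the lane maps noninjective. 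Nothing in your plan would detect or repair such an arithmetic obstruction.

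The odd case is in better shape: your skew product with unit clock and unit total carry is indeed a single cycle. But the five swaps are again not given. The paper's are supported on $\{S=0,k=0\}$, $P_0$, $P_0$, $\{S=1,k=0\}$, $P_1$, giving clock steps $1,1,-2$ and explicit conjugacies $\psi_c$.

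\textbf{Steps that would fail as written.} First, your suggested even-case target, the transversal map $v\mapsto v+1$ ``with exactly one transposed pair'', can never give a single cycle. If $T=\sigma\circ\pi$ with $\sigma$ and $T$ both $m$-cycles, then $\pi$ must be even. Composing an $m$-cycle with one transposition always splits it into exactly two cycles. The paper's corrections respect this: for example, $T_2$ differs from $x\mapsto x-1$ by the $3$-cycle $(0\,2\,1)$.

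Second, in-degree one in each color is not automatic, and a swap conditioned on an arbitrary coordinate need not be valid. In the canonical coloring, swapping colors $0,1$ on $\{S=0,\ i=0\}$ makes $f_1'$ send both $(0,j,-j)$ and $(1,j-1,-j)$ to $(1,j,-j)$, because $\tau_{0,1}\colon(i,j,k)\mapsto(i+1,j-1,k)$ does not preserve $i$. The paper's $k=0$ lines work precisely because $\tau_{0,1}$ preserves $k$. The clean fix is the paper's validity-via-return-maps lemma. All layers have size $m^2$, so bijectivity of $R_a$ forces every layer map, hence $f_a$, to be bijective. Validity then comes free once each $R_a$ is shown to be a single $m^2$-cycle.

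Finally, ``agrees with the odometer except on $O(1)$ lanes'' misdescribes the mechanism. The defects lie on a few affine lines transverse to the lanes, so every lane meets $O(1)$ defects. The splice analysis closes up the resulting arithmetic drifts ($x\mapsto x+3$, $x+2/x+6$, $x+2$, $x+4$, $x-1$) together with $O(1)$ exceptional lanes.
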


In proof terms, the paper has a simple architecture.
Section~\ref{sec:prelim} develops the two global reductions: the sign-product invariant for Kempe swaps and the return-map reduction from $V$ to $P_0$.
Section~\ref{sec:affine} proves the odd case by writing down a five-swap coloring and then exhibiting explicit affine conjugacies from its return maps to the odometer.
Section~\ref{sec:even} begins the even case by isolating the clock-and-carry mechanism that must be rebuilt, introduces Route~E, and states the finite-defect return-map framework.
Appendices~\ref{app:routeE}--\ref{app:routeE-cycle} are part of the written proof: they derive the closed formulas for the Route~E return maps and then turn those formulas into single-cycle statements.
Appendix~\ref{app:m4} records the finite witness for $m=4$.
Appendix~\ref{app:verification} is supplementary only; it audits the displayed formulas and witness data but is not used as a proof step.

The point of this organization is conceptual as well as technical.
The layer function supplies a built-in clock of period $m$.
Passing to the first return removes that forced drift and exposes the genuine slow variables.
In the odd case nothing further is needed: the return maps on $P_0$ are already affine copies of the standard odometer.
In the even case one more first-return reduction to explicit lanes isolates the same mechanism, now with finitely many controlled splices.
That is the sense in which the whole proof is a clock-and-carry argument.

\begin{table}[t]
\centering
\footnotesize
\begin{tabularx}{\textwidth}{@{}l>{\raggedright\arraybackslash}X@{}}
\toprule
notation & meaning \\
\midrule
$V=(\mathbb Z_m)^3$ & vertex set of the directed torus $D_3(m)$ \\
$S(i,j,k)=i+j+k\pmod m$ & layer function; every legal arc increases $S$ by $1$ \\
$P_t=\{S=t\}$ & layer plane / return section \\
$f_c$ & global color-$c$ map on $V$ (a permutation when the direction assignment is a valid coloring) \\
$F_c=f_c^m|_{P_0}$ & return map on $P_0$ for the odd-case affine construction (Section~\ref{sec:affine}) \\
$R_c=f_c^m|_{P_0}$ & return map on $P_0$ for Route~E (Section~\ref{sec:even}); abbreviated $R_c$ to distinguish from the odd-case $F_c$ \\
$O(u,v)=(u+1,\ v+\mathbf 1_{u=0})$ & standard two-dimensional odometer; model return map after the layer drift is factored out \\
$T_c$ & first-return map on the lane transversal $L=\{(u,0):u\in\mathbb Z_m\}$; this is the one-dimensional carry map in the even proof \\
$\rho_c(u)$ & first-return time from lane $u$ \\
$(u,t)=\Phi_c(i,k)$ & adapted bulk coordinates for color~$c$; $t$ is the clock coordinate in the bulk branch (Lemma~\ref{lem:routeE-bulkcoords}) \\
\bottomrule
\end{tabularx}
\caption{Core notation and proof-level roles in the return-map analysis of $D_3(m)$.}
\label{tab:notation}
\end{table}

\section{Colorings, Kempe swaps, and return maps}\label{sec:prelim}

\subsection{Arc-colorings and direction assignments}
We define a \emph{coloring} of $D_3(m)$ to be a triple of permutations $(f_0,f_1,f_2)$ on $V$ such that
\[
\{f_0(v),f_1(v),f_2(v)\}=\{v+e_1,\ v+e_2,\ v+e_3\}
\qquad\text{for every }v\in V.
\]
Equivalently, at each vertex this assigns a bijection from the color set $\{0,1,2\}$ to the three out-arcs $\{v\to v+e_1,\, v\to v+e_2,\, v\to v+e_3\}$.
The color-$c$ subgraph is the functional digraph of $f_c$ (i.e., the digraph with arc set $\{v\to f_c(v):v\in V\}$) and is therefore a disjoint union of directed cycles.
We say that color $c$ is \emph{Hamilton} if $f_c$ is a single $m^3$-cycle.

\begin{example}[Canonical coloring]\label{ex:canonical}
The \emph{canonical coloring} assigns the identity direction triple
$(d_0,d_1,d_2)=(0,1,2)$ at every vertex, so that
\[
f_0(i,j,k)=(i+1,j,k),\qquad f_1(i,j,k)=(i,j+1,k),\qquad f_2(i,j,k)=(i,j,k+1).
\]
Each color is then a disjoint union of $m^2$ directed $m$-cycles.
This coloring serves as the starting point for the odd-case construction (Section~\ref{sec:affine})
and as the baseline whose sign product determines the parity barrier (Corollary~\ref{cor:parity-barrier}).
\end{example}

We will also need a relaxed notion that drops the permutation requirement.

\begin{definition}[Direction assignment]\label{def:direction-assignment}
A \emph{direction assignment} on $D_3(m)$ is a triple $\delta=(d_0,d_1,d_2)$ with $d_c:V\to\{0,1,2\}$ such that for every vertex $v$ the triple $(d_0(v),d_1(v),d_2(v))$ is a permutation of $(0,1,2)$.
It induces maps
\[
g_c(v):=v+e_{d_c(v)}\qquad (c\in\{0,1,2\}).
\]
We call the direction assignment a \emph{coloring} precisely when each induced map $g_c$ is a permutation of $V$.
\end{definition}

Route~E is first specified as a direction assignment; Lemma~\ref{lem:return-validity} later upgrades it to a coloring.

\subsection{Kempe maps and Kempe swaps}
Kempe-chain recoloring is a classical tool in graph coloring; for background on Kempe equivalence see Mohar~\cite{Mohar2006} and the monograph of Stiebitz, Scheide, Toft, and Favrholdt~\cite{StiebitzEtAl2012}.
Fix two colors $r\ne s$.
Given a coloring $(f_0,f_1,f_2)$, define the \emph{Kempe map}
\[
\tau_{r,s}:=f_s^{-1}\circ f_r.
\]
Its cycles are exactly the alternating $(r,s)$-colored cycles in the $2$-colored subdigraph on colors $r$ and $s$.

\begin{definition}[Kempe swap]
Let $X\subseteq V$ be a union of cycles of $\tau_{r,s}$.
The \emph{Kempe swap of colors $(r,s)$ on $X$} is the operation producing a new triple $(f'_0,f'_1,f'_2)$ by exchanging the color-$r$ and color-$s$ arcs at every vertex in $X$:
\[
f'_r(v)=
\begin{cases}
f_s(v), & v\in X,\\
f_r(v), & v\notin X,
\end{cases}
\qquad
f'_s(v)=
\begin{cases}
f_r(v), & v\in X,\\
f_s(v), & v\notin X,
\end{cases}
\qquad
f'_t=f_t\ (t\notin\{r,s\}).
\]
\end{definition}

\begin{lemma}[Kempe swaps preserve validity]\label{lem:kempe-valid}
If $(f_0,f_1,f_2)$ is a coloring and $X$ is a union of cycles of $\tau_{r,s}$, then the Kempe swap on $X$ produces another valid coloring.
\end{lemma}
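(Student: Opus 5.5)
We verify the two requirements of a coloring for the new triple $(f'_0,f'_1,f'_2)$: the local condition that at each vertex the three maps hit the three out-neighbours, and the global condition that each $f'_c$ is a permutation of $V$.

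\textbf{Local condition.} At each vertex $v$ the new triple is either unchanged (if $v\notin X$) or obtained from $(f_0(v),f_1(v),f_2(v))$ by exchanging the entries in positions $r$ and $s$ (if $v\in X$). In both cases the set $\{f'_0(v),f'_1(v),f'_2(v)\}$ equals $\{v+e_1,v+e_2,v+e_3\}$. The third color $t\notin\{r,s\}$ is untouched, so $f'_t=f_t$ is a permutation.

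\textbf{Global condition.} It remains to show that $f'_r$ and $f'_s$ are bijections. Since $V$ is finite, it suffices to prove injectivity of $f'_r$, and the argument for $f'_s$ is symmetric. Suppose $f'_r(v)=f'_r(w)=y$ with $v\ne w$. Because $f_r$ and $f_s$ are individually injective, the only possibility is that one point, say $v\in X$, uses $f_s$ and the other, $w\notin X$, uses $f_r$. Then $f_s(v)=f_r(w)=y$, which gives
\[
\tau_{r,s}(w)=f_s^{-1}(f_r(w))=f_s^{-1}(y)=v .
\]
So $w$ and $v$ lie on the same cycle of $\tau_{r,s}$. Because $X$ is a union of $\tau_{r,s}$-cycles, $v\in X$ forces $w\in X$, a contradiction. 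Hence $f'_r$ is injective.

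For $f'_s$, suppose $f_r(v)=f_s(w)$ with $v\in X$ and $w\notin X$. Then $\tau_{r,s}(v)=w$, and the same closure property of $X$ gives the same contradiction.

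The only substantive step is identifying that a collision forces $\tau_{r,s}$ to map between $X$ and its complement. The hypothesis that $X$ is $\tau_{r,s}$-invariant is exactly what excludes this.
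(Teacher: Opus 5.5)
Your proof is correct and rests on the same fact as the paper's: $X$ is closed under $\tau_{r,s}$. This is equivalent to $f_r(X)=f_s(X)$, which holds cycle by cycle as $f_r(C)=f_s(C)$ via $f_r(v_t)=f_s(v_{t+1})$. The paper phrases this as swapping two perfect matchings from each cycle $C$ onto that common image set, while you argue the contrapositive as a global collision argument; this also makes explicit the local out-neighbour condition and the cross-collision case between $X$ and $V\setminus X$, which the paper's ``outside $X$ nothing changes'' leaves implicit.
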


\begin{proof}
On each $\tau_{r,s}$-cycle $C\subseteq X$, the color-$r$ and color-$s$ arcs restrict to two perfect matchings from the source set $C$ onto the common image set $f_r(C)=f_s(C)$.
Indeed, if $C=(v_0,v_1,\dots,v_{\ell-1})$, then $f_r(v_t)=f_s(v_{t+1})$ along the cycle.
Swapping the two colors on $C$ simply exchanges these matchings, so every vertex incident with $C$ still has indegree $1$ and outdegree $1$ in each of the two colors.
Outside $X$ nothing changes.
\end{proof}

\subsection{A parity barrier}
For a permutation $\pi$ on a finite set, let $\mathrm{sgn}(\pi)\in\{\pm1\}$ denote its sign.

\begin{theorem}[Sign-product invariant]\label{thm:sign-invariant}
Let $(f_0,f_1,f_2)$ be a coloring and let $(f'_0,f'_1,f'_2)$ be obtained by a Kempe swap between two colors.
Then
\[
\mathrm{sgn}(f'_0)\,\mathrm{sgn}(f'_1)\,\mathrm{sgn}(f'_2)
=
\mathrm{sgn}(f_0)\,\mathrm{sgn}(f_1)\,\mathrm{sgn}(f_2).
\]
\end{theorem}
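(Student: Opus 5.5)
The plan is to express the new permutations $f'_r,f'_s$ as compositions of the old ones with a permutation supported on $X$, and then compare signs. Only colors $r$ and $s$ change, and $f'_t=f_t$ for the third color. So it suffices to show
\[
\mathrm{sgn}(f'_r)\,\mathrm{sgn}(f'_s)=\mathrm{sgn}(f_r)\,\mathrm{sgn}(f_s).
\]

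First, write $f'_r=f_r\circ\alpha$ with $\alpha:=f_r^{-1}\circ f'_r$. For $v\notin X$ we have $\alpha(v)=v$. For $v\in X$ we have $\alpha(v)=f_r^{-1}(f_s(v))=\tau_{r,s}^{-1}(v)$. Since $X$ is a union of $\tau_{r,s}$-cycles, $\alpha$ equals $\tau_{r,s}^{-1}$ restricted to $X$ and the identity off $X$. This is a genuine permutation, so its sign is defined, and $\mathrm{sgn}(\alpha)=\mathrm{sgn}(\tau_{r,s}|_X)$.

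Similarly, write $f'_s=f_s\circ\beta$ with $\beta:=f_s^{-1}\circ f'_s$. For $v\in X$ this gives $\beta(v)=f_s^{-1}f_r(v)=\tau_{r,s}(v)$, and $\beta$ is the identity elsewhere. Hence $\beta=\alpha^{-1}$ and $\mathrm{sgn}(\beta)=\mathrm{sgn}(\alpha)$.

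Multiplicativity of the sign then gives
\[
\mathrm{sgn}(f'_r)\,\mathrm{sgn}(f'_s)=\mathrm{sgn}(f_r)\,\mathrm{sgn}(f_s)\,\mathrm{sgn}(\alpha)^2,
\]
and since $\mathrm{sgn}(\alpha)^2=1$ the claim follows. The proof of Lemma~\ref{lem:kempe-valid} already guarantees that $f'_r$ and $f'_s$ are permutations, so all the signs involved are defined.

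The main point needing care is the identification of $\alpha$ and $\beta$ on $X$. One must check that $X$ being $\tau_{r,s}$-invariant makes the piecewise-defined maps bijections, and that the two correction factors are exactly mutually inverse rather than merely of equal sign. This is a direct computation from the definition of the swap.
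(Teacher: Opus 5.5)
Your proposal is correct and is essentially the paper's argument. The paper also factors $f'_r=f_r\circ\sigma^{-1}$ and $f'_s=f_s\circ\sigma$, with $\sigma$ equal to $\tau_{r,s}$ on a swapped cycle and the identity elsewhere, so the two correction signs cancel. The only difference is that you take $\sigma=\tau_{r,s}|_X$ on the whole support at once instead of going cycle by cycle; this is slightly cleaner and also shows directly that $f'_r,f'_s$ are permutations.
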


\begin{proof}
It suffices to consider a swap between colors $r$ and $s$, since the third color is unchanged.
Because the swap support $X$ is a disjoint union of $\tau_{r,s}$-cycles and the sign is multiplicative, it suffices to treat a single cycle.
Let $C=(v_0,v_1,\dots,v_{\ell-1})$ be one cycle of $\tau_{r,s}=f_s^{-1}\circ f_r$ contained in $X$.
Define $\sigma\in S(V)$ to be the $\ell$-cycle $(v_0\,v_1\,\cdots\,v_{\ell-1})$ on $C$ and to fix $V\setminus C$ pointwise.
A direct check using $f_r(v_t)=f_s(v_{t+1})$ shows
\[
f'_r=f_r\circ\sigma^{-1},\qquad f'_s=f_s\circ\sigma.
\]
Therefore
\[
\mathrm{sgn}(f'_r)\mathrm{sgn}(f'_s)
=
\mathrm{sgn}(f_r)\mathrm{sgn}(\sigma^{-1})\,\mathrm{sgn}(f_s)\mathrm{sgn}(\sigma)
=
\mathrm{sgn}(f_r)\mathrm{sgn}(f_s).
\]
When $X$ is a union of several cycles, applying the argument cycle by cycle shows that the swap on each cycle preserves the sign product, so the global sign product is invariant.
\end{proof}

\begin{corollary}[Parity barrier for Kempe-from-canonical when $m$ is even]\label{cor:parity-barrier}
Let $m$ be even. Then no sequence of Kempe swaps starting from the canonical coloring can reach a Hamilton decomposition of $D_3(m)$.
\end{corollary}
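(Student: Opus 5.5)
By Theorem~\ref{thm:sign-invariant}, the product $\mathrm{sgn}(f_0)\,\mathrm{sgn}(f_1)\,\mathrm{sgn}(f_2)$ is unchanged by every Kempe swap. An induction on the number of swaps therefore shows that any coloring reachable from the canonical coloring has the same sign product as the canonical one. The plan is to compute this product at both ends of the sequence and show that the two values differ when $m$ is even.

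\textbf{Canonical coloring.} In the canonical coloring of Example~\ref{ex:canonical}, each $f_c$ is a disjoint union of $m^2$ directed $m$-cycles. An $m$-cycle has sign $(-1)^{m-1}$, so
\[
\mathrm{sgn}(f_c)=(-1)^{(m-1)m^2}.
\]
For even $m$ the exponent is even, so $\mathrm{sgn}(f_c)=+1$ for every $c$, and the sign product is $+1$.

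\textbf{Hamilton decomposition.} In a Hamilton decomposition each $f_c$ is a single $m^3$-cycle, so
\[
\mathrm{sgn}(f_c)=(-1)^{m^3-1}.
\]
For even $m$ the number $m^3$ is even, so each sign is $-1$ and the product of the three signs is $(-1)^3=-1$.

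\textbf{Conclusion.} Since $+1\neq -1$, no sequence of Kempe swaps starting from the canonical coloring can reach a Hamilton decomposition when $m$ is even. There is no real obstacle in this argument. The only points to check are that the induction is valid, because Lemma~\ref{lem:kempe-valid} guarantees that each intermediate triple is again a coloring so Theorem~\ref{thm:sign-invariant} applies at each step, and that the two parity computations are correct.
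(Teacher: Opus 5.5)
Your proof is correct and follows the paper's argument: the sign product is $+1$ for the canonical coloring ($m^2$ cycles of length $m$, each of sign $-1$) and $-1$ for any Hamilton decomposition (three $m^3$-cycles, each of sign $-1$), and Theorem~\ref{thm:sign-invariant} forbids passing from one to the other. Your explicit remark that Lemma~\ref{lem:kempe-valid} keeps every intermediate triple a coloring, so the invariant applies at each step of the induction, is a detail the paper leaves implicit.
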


\begin{proof}
A directed Hamilton cycle on $N=m^3$ vertices is an $N$-cycle permutation and therefore has sign $(-1)^{N-1}=-1$ when $m$ is even.
Hence any decomposition into three Hamilton cycles must have sign product $(-1)^3=-1$.
In the canonical coloring, each color is a disjoint union of $m^2$ directed $m$-cycles.
Each $m$-cycle has sign $(-1)^{m-1}=-1$ since $m$ is even, and the product of $m^2$ such signs is $(-1)^{m^2}=+1$ since $m^2$ is even.
Therefore the total sign product is $(+1)^3=+1$.
The result now follows from Theorem~\ref{thm:sign-invariant}.
\end{proof}

\subsection{The layer function and return maps}
Define the layer function
\[
S(i,j,k):=i+j+k\pmod m.
\]
Every allowed arc increases $S$ by $1$, so for every coloring and every color $c$ one has $S(f_c(v))=S(v)+1$.

\begin{lemma}[Plane invariance of Kempe maps]\label{lem:plane-invariant}
For any coloring and any two colors $r\ne s$, the Kempe map $\tau_{r,s}=f_s^{-1}\circ f_r$ preserves $S$.
Consequently, every $\tau_{r,s}$-cycle is contained in a single plane $P_t:=\{S=t\}$.
\end{lemma}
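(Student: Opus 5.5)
The plan is to use only the fact, recorded just before the statement, that every color map raises the layer function by exactly one: $S(f_c(v))=S(v)+1$ for every coloring, every color $c$ and every $v\in V$. Since each $f_c$ is a permutation of $V$, the inverse $f_s^{-1}$ exists and lowers $S$ by exactly one. To see this, take any $w\in V$ and write $w=f_s(u)$ with $u=f_s^{-1}(w)$. Then $S(w)=S(u)+1$, so $S(f_s^{-1}(w))=S(w)-1$.

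The first claim follows by composing these two facts. For $v\in V$,
\[
S(\tau_{r,s}(v))=S\bigl(f_s^{-1}(f_r(v))\bigr)=S(f_r(v))-1=S(v)+1-1=S(v),
\]
so $\tau_{r,s}$ preserves $S$. Geometrically, $f_r(v)$ and $f_s(f_r)^{-1}$-preimage are both in-neighbours of the same out-neighbour $f_r(v)$, and any in-neighbour of a vertex lies one layer below it.

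For the consequence, iterate the identity. Every element $\tau_{r,s}^n(v)$ of the $\tau_{r,s}$-cycle through $v$ then has $S$-value $S(v)$, so the whole cycle lies in $P_{S(v)}$.

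There is no real obstacle here. The only point needing care is that $f_s^{-1}$ makes sense, which holds because a coloring consists of permutations. If one only had a direction assignment, one would instead argue at the level of the identity $f_r(v)=f_s(\tau_{r,s}(v))$ used in Lemma~\ref{lem:kempe-valid}.
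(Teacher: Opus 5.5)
Your proof is correct and follows the paper's argument: both write $u=f_s^{-1}(f_r(v))$ and apply $S(f_c(x))=S(x)+1$ once to $f_r$ and once to $f_s$ to get $S(u)=S(v)$, and the containment of each cycle in one plane then follows by iteration, as you say. The only blemish is the garbled ``geometrically'' sentence, which should say that $v$ and $\tau_{r,s}(v)$ are both in-neighbours of $f_r(v)$; the proof does not depend on it.
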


\begin{proof}
Let $w=f_r(v)$. Then $S(w)=S(v)+1$.
If $u=\tau_{r,s}(v)=f_s^{-1}(w)$, then $f_s(u)=w$, so $S(w)=S(u)+1$ and therefore $S(u)=S(v)$.
\end{proof}

\begin{corollary}[Plane swaps are always Kempe-valid]\label{cor:plane-swaps}
For any $t\in\mathbb Z_m$, the plane $P_t$ is a union of cycles of $\tau_{r,s}$ for every pair $r\ne s$.
So swapping two colors on an entire plane is always a valid Kempe swap.
\end{corollary}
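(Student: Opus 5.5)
The claim is that $P_t$ is a union of cycles of $\tau_{r,s}$, which by definition means $\tau_{r,s}(P_t)\subseteq P_t$ and $\tau_{r,s}^{-1}(P_t)\subseteq P_t$. Once we have that, Lemma~\ref{lem:kempe-valid} applies with $X=P_t$, and the swap is valid.

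The plan is to apply Lemma~\ref{lem:plane-invariant}. It says $S(\tau_{r,s}(v))=S(v)$ for every $v\in V$, so $\tau_{r,s}$ maps $P_t$ into $P_t$. Because $\tau_{r,s}=f_s^{-1}\circ f_r$ is a composition of permutations of $V$, it is itself a permutation of the finite set $V$. A permutation that maps the finite set $P_t$ into itself maps it onto itself. Therefore every orbit, that is every cycle of $\tau_{r,s}$, meeting $P_t$ lies entirely in $P_t$. This is also the second sentence of Lemma~\ref{lem:plane-invariant}. It follows that $P_t$ is the disjoint union of the $\tau_{r,s}$-cycles it meets.

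Since $X=P_t$ is a union of $\tau_{r,s}$-cycles, Lemma~\ref{lem:kempe-valid} shows that exchanging colors $r$ and $s$ at every vertex of $P_t$ gives a valid coloring. The argument never used the particular pair $r\ne s$ or the particular coloring, so the statement holds for all of them.

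There is no real obstacle here. The only point to state explicitly is that $\tau_{r,s}$ is a bijection: this is what turns ``$S$-preserving'' into ``each $P_t$ is a union of whole cycles''. It holds because the $f_c$ are assumed to be permutations in the definition of a coloring.
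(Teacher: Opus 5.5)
Your proof is correct and follows the paper's route: Lemma~\ref{lem:plane-invariant} shows that $\tau_{r,s}$ preserves $S$, so each $P_t$ is a union of $\tau_{r,s}$-cycles, and Lemma~\ref{lem:kempe-valid} then makes the plane swap valid. You also spell out that $\tau_{r,s}$ is a bijection, so a finite set it maps into itself is a union of whole cycles; the paper leaves this step implicit.
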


\begin{lemma}[Return maps control cycle structure]\label{lem:return}
Fix a coloring and a color $c$.
Let $P_0=\{S=0\}$ and let
\[
F_c:=f_c^m\big|_{P_0}:P_0\to P_0
\]
be the return map.
Then the directed cycles of $f_c$ are in bijection with the directed cycles of $F_c$, and the length of a cycle of $f_c$ is $m$ times the length of the corresponding cycle of $F_c$.
In particular, $f_c$ is Hamilton on $V$ if and only if $F_c$ is a single $m^2$-cycle on $P_0$.
\end{lemma}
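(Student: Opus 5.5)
The plan is to use that $f_c$ is a permutation of $V$ that advances the layer function by exactly one, $S(f_c(v))=S(v)+1$. Consequently $f_c^m$ maps $P_0$ to itself. Since $f_c^m$ is injective on $V$, its restriction $F_c$ is an injective self-map of the finite set $P_0$, hence a permutation. The bijection between cycles will then come from intersecting each $f_c$-cycle with $P_0$.

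\emph{Every cycle meets $P_0$, and its length is a multiple of $m$.} Let $Z$ be a cycle of $f_c$ with length $L$, and pick $v\in Z$. The layers $S(f_c^n(v))=S(v)+n$ run through all residues mod $m$, so $Z$ meets $P_0$. Returning to $v$ after $L$ steps forces $S(v)+L\equiv S(v)$, that is, $m\mid L$. Write $L=mq$.

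\emph{$Z\cap P_0$ is a single $F_c$-cycle of length $q$.} Fix $w\in Z\cap P_0$. The points of $Z$ lying in $P_0$ are exactly $f_c^n(w)$ with $m\mid n$ and $0\le n<L$, namely $w,F_c(w),\dots,F_c^{q-1}(w)$. These $q$ points are distinct because the points $f_c^n(w)$ for $0\le n<L$ are distinct. Moreover $F_c^q(w)=f_c^L(w)=w$. So $Z\cap P_0$ is precisely the $F_c$-orbit of $w$, and that orbit is a $q$-cycle.

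\emph{The correspondence is a bijection.} Define the map $Z\mapsto Z\cap P_0$ from $f_c$-cycles to $F_c$-cycles. It is injective because distinct $f_c$-cycles are disjoint, so their intersections with $P_0$ are disjoint and nonempty. It is surjective because every $u\in P_0$ lies on some $f_c$-cycle $Z$, and the $F_c$-cycle through $u$ is then $Z\cap P_0$. The lengths satisfy $L=m\cdot q$, as claimed.

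\emph{The Hamilton criterion.} Now $f_c$ is a single $m^3$-cycle exactly when it has one cycle. By the bijection, this happens exactly when $F_c$ has one cycle. A single cycle of $F_c$ must cover $P_0$, which has $m^2$ points, and the length relation then gives $m\cdot m^2=m^3$ automatically.

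There is no real obstacle here. The only point needing care is the distinctness argument in the second step, which relies on $f_c$ being a permutation. That property is built into the definition of a coloring.
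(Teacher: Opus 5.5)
Your proof is correct and follows the same approach as the paper. Both arguments use the fact that $S$ increases by one at each step, so every $f_c$-orbit meets $P_0$ exactly once every $m$ steps, and its trace on $P_0$ is an $F_c$-orbit. You write out the details that the paper's two-line proof leaves implicit: that $F_c$ is a permutation, that every $f_c$-cycle meets $P_0$ and has length divisible by $m$, and that $Z\mapsto Z\cap P_0$ is a length-scaling bijection between cycles.
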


\begin{proof}
Along an $f_c$-orbit, the value of $S$ increases by $1$ at every step, so the orbit meets $P_0$ exactly once every $m$ steps.
The induced motion on $P_0$ is therefore exactly the return map $F_c=f_c^m|_{P_0}$.
\end{proof}

\begin{lemma}[Validity via return maps]\label{lem:return-validity}
Let $f:V\to V$ be a directed color map such that every step increases the layer $S$ by $1$.
Write $P_t=\{S=t\}$ and let $g_t:P_t\to P_{t+1}$ be the induced layer maps.
If the return map
\[
R=g_{m-1}\circ\cdots\circ g_0:P_0\to P_0
\]
is bijective, then each $g_t$ is bijective.
Consequently $f$ is a permutation of $V$.
\end{lemma}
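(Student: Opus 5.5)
The plan is to exploit finiteness and the fact that all layers have the same size. Every plane $P_t=\{S=t\}$ has exactly $m^2$ elements: given $t$, the pair $(i,j)$ is free and $k=t-i-j$ is determined. So each layer map $g_t:P_t\to P_{t+1}$ is a map between finite sets of equal cardinality. For such a map, injectivity, surjectivity and bijectivity are equivalent, so it suffices to show that each $g_t$ is injective, or alternatively that each is surjective.

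The key step takes each $t\in\{0,\dots,m-1\}$ and writes $R=A_t\circ g_t\circ B_t$, where $B_t=g_{t-1}\circ\cdots\circ g_0:P_0\to P_t$ and $A_t=g_{m-1}\circ\cdots\circ g_{t+1}:P_{t+1}\to P_0$, with the empty composite taken to be the identity. Since $R$ is bijective, the map $A_t\circ g_t$ is surjective onto $P_0$. This gives two facts:
\begin{itemize}
\item $g_{m-1}$ is surjective, because it is the last factor of the surjection $R$;
\item $g_0$ is injective, because it is the first factor of the injection $R$.
\end{itemize}
Extending from the endpoints to the middle layers is where some care is needed. An induction handles it. Assume $g_0,\dots,g_{t-1}$ are bijective. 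Then $B_t$ is a bijection, so $A_t\circ g_t = R\circ B_t^{-1}$ is a bijection $P_t\to P_0$. In particular it is injective, so $g_t$ is injective, and by the cardinality count $g_t$ is bijective. This proves that every $g_t$ is bijective.

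For the consequence, note that $V$ is the disjoint union of the planes $P_t$ and that $f$ restricted to $P_t$ equals $g_t$, mapping into $P_{t+1}$. A map that sends each block of a partition bijectively onto the next block, cyclically, is a bijection of the union. Hence $f$ is a permutation of $V$.

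The only potential obstacle is the induction step, which requires recognising $B_t$ as invertible before deducing injectivity of $g_t$. Once the equal layer sizes $|P_t|=m^2$ are noted, the argument is routine.
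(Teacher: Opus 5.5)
Your proof is correct and follows essentially the same route as the paper: you use the equal layer sizes $m^2$, get injectivity of $g_0$ from the injectivity of $R$, and then induct via the bijection $A_t\circ g_t=R\circ B_t^{-1}$, which is exactly the paper's step $g_{m-1}\circ\cdots\circ g_1=R\circ g_0^{-1}$ repeated. Your side remark that $g_{m-1}$ is surjective is not needed for the argument, but it is harmless.
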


\begin{proof}
All layers have the same finite size $m^2$; this equal-size property is essential for the argument.
Since $R$ is bijective, it is injective.
If $g_0$ were not injective, then $R$ would not be injective.
So $g_0$ is injective and therefore bijective.

Now write
\[
g_{m-1}\circ\cdots\circ g_1 = R\circ g_0^{-1},
\]
so the shorter composition is again bijective.
Repeating the same argument inductively shows that every $g_t$ is bijective.
Therefore every vertex has indegree $1$ and outdegree $1$ in this color, so $f$ is a permutation of $V$.
\end{proof}

\begin{remark}[Why the return section matters]\label{rem:return-odometer}
Lemma~\ref{lem:return} is the Poincar\'e-section reduction that drives the whole paper.
It shrinks the state space from $m^3$ vertices to a section of size $m^2$ and, more to the point, factors out the forced layer drift and isolates the slow carry information that decides whether a color is Hamiltonian.
The standard normal form for that reduced dynamics is the odometer introduced next.
\end{remark}

\begin{definition}[Standard two-dimensional odometer]
Define
\[
O:\Zm^2\to \Zm^2,\qquad O(u,v)=(u+1,\ v+\mathbf 1_{u=0}).
\]
\end{definition}

\begin{lemma}[The odometer is a single cycle]\label{lem:odometer}
The map $O$ is a single $m^2$-cycle on $\Zm^2$.
\end{lemma}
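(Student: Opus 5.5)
The plan is to exhibit the unique orbit of $(0,0)$ explicitly and show that it visits every point of $\Zm^2$ before returning. I will identify $\Zm^2$ with $\{0,\dots,m-1\}^2$ through standard representatives. Starting from $(0,0)$, the first coordinate increases by one at each step, so after $n$ steps it equals $n \bmod m$.

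The second coordinate increases by one exactly at those steps where the current first coordinate is $0$. These are the steps taken from times $0, m, 2m, \dots$. The increment happens during the step from time $0$ itself, since $u=0$ there. Hence for $0\le n< m^2$ I claim
\[
O^n(0,0)=\bigl(n \bmod m,\ \lfloor n/m\rfloor+1 \bmod m\bigr)\quad\text{for }n\ge 1,
\]
and I will prove this by a one-line induction on $n$. A cleaner formulation is to first conjugate by the shift $(u,v)\mapsto(u,v-1)$, or equivalently to track the orbit of $(0,m-1)$. After this normalization the $n$-th iterate is $(n \bmod m, \lfloor n/m\rfloor \bmod m)$, which is just base-$m$ counting. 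Either version works, and I would present the base-$m$ counting one.

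With the closed form in hand, the map $n\mapsto(n\bmod m,\lfloor n/m\rfloor)$ is a bijection from $\{0,\dots,m^2-1\}$ onto $\{0,\dots,m-1\}^2$, since this is just base-$m$ expansion. So the orbit visits all $m^2$ points in its first $m^2$ steps, and $O^{m^2}$ returns to the start because $m^2\equiv 0$ in both coordinates. Since $O$ is a bijection (its inverse is $(u,v)\mapsto(u-1,\,v-\mathbf 1_{u=1})$) and one orbit already covers everything, $O$ is a single $m^2$-cycle.

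The only step needing care is the off-by-one in when the carry fires: the indicator reads the \emph{pre-step} value $u=0$. Choosing the starting point $(0,m-1)$ so that the carry lands at the right moment removes this issue.
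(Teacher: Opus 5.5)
Your overall strategy (write one orbit down explicitly, then use that $O$ is a bijection) is legitimate. It differs from the paper's proof, which shows $O^m(u,v)=(u,v+1)$ and reads off the exact period $m^2$ of every point from $t=km+r$.

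However, the closed form your enumeration rests on is false, so the ``one-line induction'' cannot be carried out. Starting from $(0,0)$, the carry fires on the step leaving time $s$ exactly when $s\equiv 0\pmod m$. So after $n$ steps it has fired $\#\{0\le s<n : m\mid s\}=\lceil n/m\rceil$ times, not $\lfloor n/m\rfloor+1$, and these differ at every positive multiple of $m$. Concretely, $O^m(0,0)=(0,1)$, while your formula gives $(0,2)$. Taken literally, your formula would even give $O^{m(m-1)}(0,0)=(0,0)$, a return before time $m^2$, which contradicts the lemma. The normalized version fails too: $O^m(0,m-1)=(0,0)$ rather than $(0,1)$, and at $n=0$ the start point is $(0,m-1)$, not $(0,0)$.

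The repair you propose cannot work, for a structural reason. The carry $\mathbf 1_{u=0}$ does not depend on $v$, so $O$ commutes with every vertical translation $(u,v)\mapsto(u,v+c)$. Conjugating by $(u,v)\mapsto(u,v-1)$ therefore returns $O$ itself, and starting at $(0,m-1)$ just shifts the orbit of $(0,0)$ down by one. No choice of $v$ changes \emph{when} the carry fires relative to the clock.

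The shift has to be horizontal. Conjugating by $(u,v)\mapsto(u-1,v)$ turns $O$ into the genuine base-$m$ counter $(u,v)\mapsto(u+1,\,v+\mathbf 1_{u=m-1})$. Equivalently, $O^n(1,0)=\bigl((n+1)\bmod m,\ \lfloor n/m\rfloor\bmod m\bigr)$, and your base-$m$ expansion argument then goes through unchanged. Alternatively, keep the start $(0,0)$ with the corrected count $\lceil n/m\rceil$. Writing $n-1=am+b$ with $0\le a,b\le m-1$ gives $O^n(0,0)=(b+1,a+1)$, which is a bijection from $\{1,\dots,m^2\}$ onto $\Zm^2$.

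With this correction your proof is complete, and its advantage is an explicit listing of the cycle. The paper's argument only uses that the clock passes through $0$ exactly once in any $m$ consecutive steps. That is why it never has to confront the carry-timing off-by-one that tripped you up.
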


\begin{proof}
After $m$ applications of $O$, the first coordinate has completed one full turn and the carry has fired exactly once, so
\[
O^m(u,v)=(u,v+1).
\]
Hence $O^{m^2}=\mathrm{id}$.

Conversely, suppose $O^t(u,v)=(u,v)$.
Write $t=km+r$ with $0\le r<m$.
The first coordinate of $O^t(u,v)$ is $u+r$, so $r=0$.
Then
\[
O^t(u,v)=O^{km}(u,v)=(u,v+k),
\]
and therefore $k\equiv 0\pmod m$.
Thus every point has exact period $m^2$, and since $|\Zm^2|=m^2$, the map $O$ is a single cycle.
\end{proof}

\begin{remark}[Why the odometer matters]
The odometer is the archetypal clock-and-carry system.
One coordinate advances every step; the other changes only when that first coordinate wraps.
The odd proof will identify the return maps with this model exactly, up to affine conjugacy.
The even proof will recover the same mechanism only after a second return-map reduction to lane transversals and a finite splice analysis.
\end{remark}

\section{Odd \texorpdfstring{$m$}{m}: an explicit affine odometer witness}\label{sec:affine}

This section proves the odd case.
A five-swap Kempe modification supported on two adjacent layers produces a valid coloring with closed return-map formulas on $P_0$.
Those formulas are already affine odometers, so once they are written down, Hamiltonicity follows immediately from the odometer lemma.

\subsection{The affine supports}
Recall the canonical coloring from Example~\ref{ex:canonical}.

Let
\[
P_0:=\{S=0\},\qquad P_1:=\{S=1\},
\]
and define the affine lines
\[
L_0:=\{S=0,\ k=0\},\qquad L_1:=\{S=1,\ k=0\}.
\]

\begin{theorem}[Five-swap witness]\label{thm:5swap}
Start from the canonical coloring and perform the following five Kempe swaps:
\begin{enumerate}[leftmargin=2.2em]
\item swap colors $(0,1)$ on the line $L_0$;
\item swap colors $(0,2)$ on the plane $P_0$;
\item swap colors $(0,1)$ on the plane $P_0$;
\item swap colors $(0,1)$ on the line $L_1$;
\item swap colors $(0,2)$ on the plane $P_1$.
\end{enumerate}
This produces a valid coloring for every $m\ge 3$.
\end{theorem}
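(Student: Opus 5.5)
The plan is to apply Lemma~\ref{lem:kempe-valid} five times in sequence. At each step I will check that the support being swapped is a union of cycles of the current Kempe map $\tau_{r,s}=f_s^{-1}\circ f_r$. Validity then propagates inductively, starting from the canonical coloring of Example~\ref{ex:canonical}, which is a coloring.

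The plane swaps (steps 2, 3 and 5) need no computation. After any valid prefix of swaps we still have a coloring, and Corollary~\ref{cor:plane-swaps} says every plane $P_t$ is a union of $\tau_{r,s}$-cycles for every pair $r\ne s$. So swapping on $P_0$ or $P_1$ is always a legitimate Kempe swap.

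For step 1 the coloring is canonical, so $f_0(i,j,k)=(i+1,j,k)$ and $f_1^{-1}(i,j,k)=(i,j-1,k)$. This gives
\[
\tau_{0,1}(i,j,k)=(i+1,\,j-1,\,k).
\]
This map preserves both $S$ and $k$. Hence $L_0=\{S=0,\ k=0\}$ is $\tau_{0,1}$-invariant, and since $\tau_{0,1}$ is a permutation, $L_0$ is a union of its cycles (in fact a single $m$-cycle).

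Step 4 is the only point needing care, and I expect it to be the main, though mild, obstacle. We must compute $\tau_{0,1}$ on $P_1$ after steps 1--3 have modified the coloring. The key observation is that steps 1--3 only reassign colors at vertices of $P_0$, since all three supports $L_0$, $P_0$, $P_0$ lie in $P_0$. Therefore at every vertex of $P_1$ the out-arcs still carry their canonical colors.

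Now take $v\in P_1$. Then $f_0(v)=v+e_1$ is canonical. To evaluate $f_1^{-1}(w)$ for $w\in P_2$, note that the unique $u$ with $f_1(u)=w$ satisfies $S(u)=S(w)-1=1$, because every arc raises $S$ by one. So $u\in P_1$, where $f_1$ is still canonical, and $u=w-e_2$. Hence on $P_1$ the current Kempe map is again
\[
\tau_{0,1}(i,j,k)=(i+1,\,j-1,\,k),
\]
which preserves $k$. This shows $L_1=\{S=1,\ k=0\}$ is a union of $\tau_{0,1}$-cycles, so step 4 is a valid Kempe swap.

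Chaining the five applications of Lemma~\ref{lem:kempe-valid} shows that the final triple is a valid coloring. Nothing in the argument uses the parity of $m$, so the conclusion holds for every $m\ge3$.
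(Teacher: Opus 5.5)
Your proposal is correct and follows essentially the same route as the paper's proof. You handle steps 2, 3 and 5 via Corollary~\ref{cor:plane-swaps} and step 1 by computing $\tau_{0,1}(i,j,k)=(i+1,j-1,k)$ on the canonical coloring; for step 4 you use the same observation as the paper, that steps 1--3 change only tails in $P_0$ and every color-$1$ arc into $P_2$ starts in $P_1$, so $\tau_{0,1}$ is still canonical on $P_1$.
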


\begin{proof}[Proof of Theorem~\ref{thm:5swap}]
Steps (2), (3), and (5) are Kempe-valid by Corollary~\ref{cor:plane-swaps} because they are supported on entire planes.
For step (1), the coloring is canonical, and
\[
\tau_{0,1}=f_1^{-1}\circ f_0:(i,j,k)\mapsto(i+1,j-1,k)
\]
preserves both $S$ and $k$, acting on $L_0=\{S=0,k=0\}$ as a single $m$-cycle.
Hence (1) is Kempe-valid.

For step (4), note that steps (1)--(3) modify only tails in $P_0$.
If $v\in P_1$, then $f_0(v)$ lies in $P_2$.
Any color-$1$ arc landing in $P_2$ must originate in $P_1$, because every legal arc increases $S$ by $1$.
Hence $f_1^{-1}$ on the image of $P_1$ in $P_2$ depends only on the outgoing color-$1$ arcs from $P_1$, which remain canonical after steps (1)--(3).
Therefore $\tau_{0,1}$ acts on $P_1$ as $(i,j,k)\mapsto(i+1,j-1,k)$, so $L_1$ is a single $\tau_{0,1}$-cycle and step (4) is Kempe-valid.
\end{proof}

\begin{definition}[Bump maps]
For $v=(i,j,k)\in V$ define
\[
\mathrm{bump}_i(v)=(i+1,j,k),\qquad
\mathrm{bump}_j(v)=(i,j+1,k),\qquad
\mathrm{bump}_k(v)=(i,j,k+1),
\]
with all arithmetic modulo $m$.
\end{definition}

\begin{proposition}[Closed form of the odd witness]\label{prop:odd-closed}
Let $m\ge 3$ and let $v=(i,j,k)\in V$ with $S=i+j+k\pmod m$.
After the five swaps of Theorem~\ref{thm:5swap}, the resulting maps $(f_0,f_1,f_2)$ are
\[
f_0(v)=
\begin{cases}
\mathrm{bump}_j(v) & \text{if } S=0 \text{ and } k\neq 0,\\
\mathrm{bump}_k(v) & \text{if } S=1,\\
\mathrm{bump}_i(v) & \text{otherwise,}
\end{cases}
\]
\[
f_1(v)=
\begin{cases}
\mathrm{bump}_k(v) & \text{if } S=0,\\
\mathrm{bump}_i(v) & \text{if } S=1 \text{ and } k=0,\\
\mathrm{bump}_j(v) & \text{otherwise,}
\end{cases}
\]
\[
f_2(v)=
\begin{cases}
\mathrm{bump}_j(v) & \text{if } S\in\{0,1\} \text{ and } k=0,\\
\mathrm{bump}_i(v) & \text{if } S\in\{0,1\} \text{ and } k\neq 0,\\
\mathrm{bump}_k(v) & \text{otherwise.}
\end{cases}
\]
\end{proposition}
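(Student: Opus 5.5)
We track, vertex by vertex, which direction each color uses through the five swaps of Theorem~\ref{thm:5swap}. Every swap is supported inside $P_0$ or $P_1$, so every vertex with $S\notin\{0,1\}$ keeps the canonical triple $(d_0,d_1,d_2)=(0,1,2)$; that is, $f_0=\mathrm{bump}_i$, $f_1=\mathrm{bump}_j$, $f_2=\mathrm{bump}_k$. This is the ``otherwise'' branch of all three formulas. It remains to follow the direction triple at vertices of $P_0$ and of $P_1$, split according to whether $k=0$.

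The key observation is that a Kempe swap of colors $(r,s)$ on $X$ acts on direction triples simply: at each $v\in X$ it exchanges the entries $d_r(v)$ and $d_s(v)$, and it changes nothing elsewhere. The swaps on $P_0$ and on $P_1$ are independent of each other, so I will compose transpositions on triples separately in four regions.

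\emph{$S=0$, $k=0$ (the line $L_0$).} The three swaps give
\[
(0,1,2)\xrightarrow{(0,1)}(1,0,2)\xrightarrow{(0,2)}(2,0,1)\xrightarrow{(0,1)}(0,2,1).
\]
So $f_0=\mathrm{bump}_i$, $f_1=\mathrm{bump}_k$, $f_2=\mathrm{bump}_j$.

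\emph{$S=0$, $k\ne0$.} Only steps (2) and (3) act:
\[
(0,1,2)\to(2,1,0)\to(1,2,0).
\]
So $f_0=\mathrm{bump}_j$, $f_1=\mathrm{bump}_k$, $f_2=\mathrm{bump}_i$.

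\emph{$S=1$, $k=0$ (the line $L_1$).} Steps (4) and (5) act:
\[
(0,1,2)\to(1,0,2)\to(2,0,1).
\]
So $f_0=\mathrm{bump}_k$, $f_1=\mathrm{bump}_i$, $f_2=\mathrm{bump}_j$.

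\emph{$S=1$, $k\ne0$.} Only step (5) acts:
\[
(0,1,2)\to(2,1,0).
\]
So $f_0=\mathrm{bump}_k$, $f_1=\mathrm{bump}_j$, $f_2=\mathrm{bump}_i$.

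Matching these four regions against the displayed cases confirms each formula:
\begin{itemize}
\item $f_0$ is $\mathrm{bump}_j$ exactly when $S=0$ and $k\ne0$, and is $\mathrm{bump}_k$ throughout $S=1$.
\item $f_1$ is $\mathrm{bump}_k$ throughout $S=0$, and is $\mathrm{bump}_i$ exactly when $S=1$ and $k=0$.
\item $f_2$ is $\mathrm{bump}_j$ when $k=0$ and $\mathrm{bump}_i$ when $k\ne0$, uniformly for $S\in\{0,1\}$.
\end{itemize}

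The one point needing care, and the main potential obstacle, is justifying that each swap acts by the stated transposition on the whole claimed support. This means the supports must be exactly $L_0$, $P_0$, $P_0$, $L_1$, $P_1$, and the ``swap on a line'' must be applied to the current coloring. This is ensured by the Kempe-validity argument in the proof of Theorem~\ref{thm:5swap}: there the supports are shown to be unions of $\tau$-cycles, and the Kempe swap definition exchanges colors at every vertex of the support. Finally, the order of the steps matters only within a plane, and it has been respected in the compositions above.
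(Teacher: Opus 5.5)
Your proposal is correct and follows the paper's proof: you track the direction triple through the swaps separately on $L_0$, $P_0\setminus L_0$, $L_1$, $P_1\setminus L_1$ and the untouched layers, and you obtain the same final triples $(0,2,1)$, $(1,2,0)$, $(2,0,1)$, $(2,1,0)$ as the paper. The Kempe-validity point you flag is not needed for the closed form itself: by the definition of a swap, its effect on the triples is purely local, so validity matters only for Theorem~\ref{thm:5swap}.
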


\begin{proof}
Outside $P_0\cup P_1$ none of the five swaps acts, so the coloring remains canonical there.

On $P_0$, step (1) swaps colors $(0,1)$ on $L_0$ and does nothing on $P_0\setminus L_0$.
Thus after step (1) one has
\[
(f_0,f_1,f_2)=
\begin{cases}
(\mathrm{bump}_j,\mathrm{bump}_i,\mathrm{bump}_k), & \text{on }L_0,\\
(\mathrm{bump}_i,\mathrm{bump}_j,\mathrm{bump}_k), & \text{on }P_0\setminus L_0.
\end{cases}
\]
Step (2) swaps colors $(0,2)$ on all of $P_0$, so this becomes
\[
(f_0,f_1,f_2)=
\begin{cases}
(\mathrm{bump}_k,\mathrm{bump}_i,\mathrm{bump}_j), & \text{on }L_0,\\
(\mathrm{bump}_k,\mathrm{bump}_j,\mathrm{bump}_i), & \text{on }P_0\setminus L_0.
\end{cases}
\]
Step (3) swaps colors $(0,1)$ on all of $P_0$, yielding
\[
(f_0,f_1,f_2)=
\begin{cases}
(\mathrm{bump}_i,\mathrm{bump}_k,\mathrm{bump}_j), & \text{on }L_0,\\
(\mathrm{bump}_j,\mathrm{bump}_k,\mathrm{bump}_i), & \text{on }P_0\setminus L_0.
\end{cases}
\]
Since $L_0=\{S=0,k=0\}$, this is exactly the stated rule on $S=0$.

On $P_1$, the first three steps do nothing.
Step (4) swaps colors $(0,1)$ on $L_1$, so after step (4)
\[
(f_0,f_1,f_2)=
\begin{cases}
(\mathrm{bump}_j,\mathrm{bump}_i,\mathrm{bump}_k), & \text{on }L_1,\\
(\mathrm{bump}_i,\mathrm{bump}_j,\mathrm{bump}_k), & \text{on }P_1\setminus L_1.
\end{cases}
\]
Step (5) swaps colors $(0,2)$ on all of $P_1$, so finally
\[
(f_0,f_1,f_2)=
\begin{cases}
(\mathrm{bump}_k,\mathrm{bump}_i,\mathrm{bump}_j), & \text{on }L_1,\\
(\mathrm{bump}_k,\mathrm{bump}_j,\mathrm{bump}_i), & \text{on }P_1\setminus L_1.
\end{cases}
\]
Since $L_1=\{S=1,k=0\}$, this is exactly the stated rule on $S=1$.
\end{proof}

\begin{proposition}[Explicit affine conjugacies to the odometer]\label{prop:odd-odometer}
Parameterize $P_0$ by $(i,k)\in \Zm^2$ with $j\equiv -i-k$.
For the coloring of Proposition~\ref{prop:odd-closed}, the return maps
\[
F_c=f_c^m\big|_{P_0}:P_0\to P_0
\]
satisfy
\[
F_0(i,k)=(i-2+\mathbf 1_{k=0},\,k+1),\qquad
F_1(i,k)=(i+\mathbf 1_{k=-1},\,k+1),
\]
\[
F_2(i,k)=(i+2-2\mathbf 1_{k=0},\,k-2).
\]
If $m$ is odd, then with
\[
\psi_0(i,k)=(k,\ i+2k),\qquad
\psi_1(i,k)=(k+1,\ i),
\]
and
\[
\psi_2(i,k)=\bigl(\lambda k,\ \lambda(i+k)\bigr),
\qquad \lambda:=(-2)^{-1}\in\Zm,
\]
one has
\[
\psi_c\circ F_c = O\circ \psi_c
\qquad (c=0,1,2).
\]
\end{proposition}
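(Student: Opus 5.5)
The plan is to compute each return map directly from the closed form of Proposition~\ref{prop:odd-closed}, and then verify each conjugacy identity by substitution. Every color-$c$ orbit starting at $v=(i,j,k)\in P_0$ takes exactly one step from layer $S=0$, one step from layer $S=1$, and then $m-2$ steps from layers $S=2,\dots,m-1$. Since $m\ge 3$, these three groups of layers are distinct and the last group is nonempty. On layers $S\ge 2$ the coloring is canonical, so those $m-2$ steps contribute $\mathrm{bump}_i^{m-2}$, $\mathrm{bump}_j^{m-2}$ or $\mathrm{bump}_k^{m-2}$ for $c=0,1,2$ respectively. In the $(i,k)$ parameterization of $P_0$ this adds $-2$ to $i$ (for $c=0$), does nothing visible (for $c=1$, since $j$ is eliminated), or adds $-2$ to $k$ (for $c=2$). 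So only the two low-layer steps need tracking. The one subtlety is that the condition at layer $S=1$ involves the value of $k$ after the first step.

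\emph{Color $0$.} If $k\ne0$, the orbit applies $\mathrm{bump}_j$ and then $\mathrm{bump}_k$, giving $(i,k)\mapsto(i-2,k+1)$ overall. If $k=0$, it applies $\mathrm{bump}_i$ and then $\mathrm{bump}_k$, giving $(i-1,k+1)$.

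\emph{Color $1$.} The first step is always $\mathrm{bump}_k$, so at layer $1$ the new third coordinate is $k+1$. The second step is $\mathrm{bump}_i$ exactly when $k+1=0$, and $\mathrm{bump}_j$ otherwise. This yields $(i+\mathbf 1_{k=-1},k+1)$.

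\emph{Color $2$.} Neither low-layer step changes $k$, and both use the same test $k=0$. They apply $\mathrm{bump}_j$ twice if $k=0$ and $\mathrm{bump}_i$ twice otherwise. Adding the $-2$ in $k$ from the canonical tail gives $(i+2-2\cdot\mathbf 1_{k=0},k-2)$.

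Next I verify $\psi_c\circ F_c=O\circ\psi_c$ coordinatewise.

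For $c=0$: $\psi_0(F_0(i,k))=(k+1,\ i-2+\mathbf 1_{k=0}+2k+2)$, while $O(\psi_0(i,k))=(k+1,\ i+2k+\mathbf 1_{k=0})$, so the two sides agree.

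For $c=1$: $\psi_1(F_1(i,k))=(k+2,\ i+\mathbf 1_{k=-1})$, while $O(k+1,i)=(k+2,\ i+\mathbf 1_{k+1=0})$, which agrees.

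For $c=2$: we use $-2\lambda=1$. Then $\psi_2(F_2(i,k))=(\lambda k-2\lambda,\ \lambda(i+k)-2\lambda\mathbf 1_{k=0})=(\lambda k+1,\ \lambda(i+k)+\mathbf 1_{k=0})$. This equals $O(\lambda k,\lambda(i+k))$, because $\lambda$ is a unit and so $\lambda k=0$ exactly when $k=0$.

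Finally, each $\psi_c$ must be a bijection of $\Zm^2$ for the identities to be genuine conjugacies.

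\begin{itemize}
\item $\psi_0$ is linear with determinant $-1$, hence invertible.
\item $\psi_1$ is a translate of the coordinate swap, hence invertible.
\item $\psi_2$ is $\lambda$ times a unimodular linear map. Here $m$ odd is used: $-2$ is invertible mod $m$, so $\lambda$ exists and is a unit.
\end{itemize}

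The main obstacle is bookkeeping rather than ideas: getting the layer-$1$ case condition right for colors $0$ and $1$, where it depends on the updated $k$, and keeping signs straight when $j$ is eliminated through $j\equiv -i-k$.

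Combined with Lemmas~\ref{lem:odometer} and~\ref{lem:return}, this makes every color Hamilton for odd $m$.
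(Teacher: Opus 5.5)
Your proposal is correct and follows the paper's proof essentially step for step. Both compute each $F_c$ by a one-return count from the closed form of Proposition~\ref{prop:odd-closed}: one step on $S=0$, one on $S=1$, then $m-2$ canonical steps. Both then verify $\psi_c\circ F_c=O\circ\psi_c$ by direct substitution and check that each $\psi_c$ is invertible via its determinant. You spell out a little more than the paper does, namely the $F_1$ and $F_2$ counts and the fact that $\mathbf 1_{\lambda k=0}=\mathbf 1_{k=0}$ because $\lambda$ is a unit.
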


\begin{proof}
Each return from $P_0$ to itself consists of one step on layer $S=0$, one step on layer $S=1$, and then $m-2$ canonical steps on layers $S\ge 2$.
So the formulas for $F_0,F_1,F_2$ come from a one-return count using Proposition~\ref{prop:odd-closed}.

For color $0$, if $k=0$ then the layer-$0$ step uses direction $0$, the layer-$1$ step uses direction $2$, and the remaining $m-2$ steps use the canonical direction $0$.
This gives displacement $(m-1,1)\equiv(-1,1)$ in the $(i,k)$ coordinates.
If $k\neq 0$, the only change is that the layer-$0$ step uses direction $1$, so the displacement becomes $(m-2,1)\equiv(-2,1)$.
Hence
\[
F_0(i,k)=(i-2+\mathbf 1_{k=0},\,k+1).
\]
The formulas for $F_1$ and $F_2$ are obtained by the same return count:
for color $1$, the exceptional layer-$1$ branch occurs precisely when the first step changes $k$ from $-1$ to $0$;
for color $2$, the exceptional branch occurs precisely when the start point has $k=0$.

Now assume $m$ is odd, so $\lambda=(-2)^{-1}$ exists in $\Zm$.
The linear parts of $\psi_0,\psi_1,\psi_2$ have determinants $-1,-1,-\lambda^2$, hence all three maps are affine bijections of $\Zm^2$.
A direct calculation gives
\[
\psi_0(F_0(i,k))
=
(k+1,\ i+2k+\mathbf 1_{k=0})
=
O(\psi_0(i,k)),
\]
\[
\psi_1(F_1(i,k))
=
(k+2,\ i+\mathbf 1_{k=-1})
=
O(\psi_1(i,k)),
\]
and
\[
\psi_2(F_2(i,k))
=
\bigl(\lambda(k-2),\ \lambda(i+k-2\mathbf 1_{k=0})\bigr)
=
\bigl(\lambda k+1,\ \lambda(i+k)+\mathbf 1_{k=0}\bigr)
=
O(\psi_2(i,k)).
\]
Thus each $F_c$ is affine-conjugate to the standard odometer.
\end{proof}

\begin{theorem}[Odd case]\label{thm:odd-hamilton}
If $m$ is odd, the coloring of Proposition~\ref{prop:odd-closed} decomposes the arc set of $D_3(m)$ into three directed Hamilton cycles.
\end{theorem}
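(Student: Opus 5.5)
The proof assembles results already established; there is essentially no new computation. The plan has three steps.

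First, validity of the coloring. Theorem~\ref{thm:5swap} shows that the five Kempe swaps produce a valid coloring. Each swap is Kempe-valid, so Lemma~\ref{lem:kempe-valid} applies at every stage. Proposition~\ref{prop:odd-closed} identifies the resulting triple $(f_0,f_1,f_2)$ with the displayed closed formulas. Hence at every vertex $v$ the three maps use the three distinct out-arcs $v\to v+e_1,\,v+e_2,\,v+e_3$, and each $f_c$ is a permutation of $V$. Consequently the three color classes partition the $3m^3$ arcs into three functional digraphs, each a disjoint union of directed cycles. This settles arc-disjointness and coverage.

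Second, reduction to the section. By Lemma~\ref{lem:return}, color $c$ is Hamilton if and only if the return map $F_c=f_c^m|_{P_0}$ is a single $m^2$-cycle on $P_0$. Identify $P_0$ with $\Zm^2$ via $(i,k)$, which is legitimate since $j=-i-k$ is determined. Proposition~\ref{prop:odd-odometer} then provides, for odd $m$, affine maps $\psi_c$ with $\psi_c\circ F_c=O\circ\psi_c$.

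Third, bijectivity of the conjugacies and conclusion. The only point to check is that each $\psi_c$ is a bijection of $\Zm^2$, so that it genuinely conjugates cycle structure. This is where oddness of $m$ enters.
\begin{itemize}
\item $\psi_0$ and $\psi_1$ have linear parts of determinant $-1$, which is a unit in $\Zm$.
\item $\psi_2$ involves $\lambda=(-2)^{-1}$, which exists because $\gcd(2,m)=1$. Its linear part has determinant $-\lambda^2$, also a unit.
\end{itemize}
An affine map of $\Zm^2$ whose linear part has unit determinant is invertible. Hence $F_c=\psi_c^{-1}\circ O\circ\psi_c$ has the same cycle type as $O$. By Lemma~\ref{lem:odometer}, $O$ is a single $m^2$-cycle, so each $F_c$ is too. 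Lemma~\ref{lem:return} then shows each $f_c$ is a Hamilton cycle of length $m\cdot m^2=m^3$, which completes the proof.

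I expect no step to be a real obstacle. The only places requiring care are the invertibility of $\psi_2$, which genuinely depends on $m$ being odd, and the correctness of the return-map formulas in Proposition~\ref{prop:odd-odometer}, which is assumed here.
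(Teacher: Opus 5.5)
Your proposal is correct and matches the paper's proof: validity comes from Theorem~\ref{thm:5swap}; each $F_c$ is a single $m^2$-cycle by the affine conjugacy of Proposition~\ref{prop:odd-odometer} together with Lemma~\ref{lem:odometer}; and Lemma~\ref{lem:return} lifts this to a Hamilton cycle on $V$. Your explicit check that each $\psi_c$ is invertible (unit determinants $-1,-1,-\lambda^2$) is already part of the paper's proof of Proposition~\ref{prop:odd-odometer}, so it adds care but does not change the route.
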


\begin{proof}
By Proposition~\ref{prop:odd-odometer} and Lemma~\ref{lem:odometer}, each return map $F_c$ is a single $m^2$-cycle on $P_0$.
Lemma~\ref{lem:return} then lifts each of these return maps to a directed Hamilton cycle on $V$.
Since Theorem~\ref{thm:5swap} already gives a valid coloring, the three color classes form a Hamilton decomposition.
\end{proof}

\begin{proposition}[Failure of color~$2$ for even $m$]\label{prop:even-shatter}
Let $m\ge 4$ be even and let $(f_0,f_1,f_2)$ be the affine coloring of Proposition~\ref{prop:odd-closed}.
Then $f_0$ and $f_1$ are Hamilton cycles, while $f_2$ decomposes into exactly $m+2$ directed cycles.
\end{proposition}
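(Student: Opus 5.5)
The plan is to read off all three return maps from Proposition~\ref{prop:odd-odometer}. The formulas for $F_0,F_1,F_2$ proved there use only Proposition~\ref{prop:odd-closed} and a one-return displacement count, so they hold for every $m\ge 3$, not only odd $m$. Validity of the coloring for every $m\ge3$ is already given by Theorem~\ref{thm:5swap}. By Lemma~\ref{lem:return}, it then suffices to show two things: $F_0$ and $F_1$ are single $m^2$-cycles on $P_0$, and $F_2$ has exactly $m+2$ cycles.

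For colors $0$ and $1$, I will reuse the conjugacies $\psi_0(i,k)=(k,i+2k)$ and $\psi_1(i,k)=(k+1,i)$. Their linear parts have determinant $-1$, a unit in $\Zm$ for every $m$, so both maps are bijections of $\Zm^2$ even when $m$ is even. The identities $\psi_c\circ F_c=O\circ\psi_c$ for $c=0,1$ were verified by direct calculation without inverting $2$. So Lemma~\ref{lem:odometer} makes $F_0$ and $F_1$ single $m^2$-cycles, and Lemma~\ref{lem:return} lifts them to Hamilton cycles.

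For color $2$ I will count cycles directly, since $\psi_2$ is unavailable. We have $F_2(i,k)=(i+2-2\mathbf 1_{k=0},\,k-2)$. When $m$ is even, the parity of $k$ is invariant, and $k\mapsto k-2$ has exact period $m/2$ on each parity class. I therefore pass to the power $F_2^{m/2}$, which fixes $k$.

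On the odd-$k$ class, $k=0$ is never hit, so $F_2^{m/2}(i,k)=(i+2\cdot\tfrac m2,k)=(i,k)$. Every point therefore has period exactly $m/2$, because the $k$-coordinate alone already forces period at least $m/2$. This class contains $m\cdot\tfrac m2$ points, so it splits into $m$ cycles.

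On the even-$k$ class, each run of $m/2$ steps passes through $k=0$ exactly once. Hence $F_2^{m/2}(i,k)=(i+m-2,k)=(i-2,k)$. The map $i\mapsto i-2$ on $\Zm$ has exact period $m/2$ and preserves the parity of $i$. So each point has $F_2$-period $(m/2)^2$, and the $m^2/2$ points of this class form exactly $2$ cycles, distinguished by the parity of $i$ on a fixed $k$-slice.

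Altogether $F_2$ has $m+2$ cycles, and Lemma~\ref{lem:return} transfers this count to $f_2$. The only delicate step is the exact-period bookkeeping in the even-$k$ class. There one must justify that the period of $F_2$ equals $m/2$ times the period of $F_2^{m/2}$ restricted to a $k$-slice. This holds because any return of $(i,k)$ must first return $k$, which takes a multiple of $m/2$ steps.
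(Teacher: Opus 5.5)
Your proposal is correct and follows the paper's route. Colors $0$ and $1$ go through the conjugacies $\psi_0,\psi_1$, which need only the unit determinant $-1$ and not the inverse of $2$; you state this explicitly, whereas the paper only says these colors are ``unchanged from the odd case.'' Color $2$ uses the same parity split in $k$: the $m/2$-step return is the identity on the odd-$k$ half, giving $m$ cycles, and translation by $-2$ on the even-$k$ half, giving $2$ cycles.
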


\begin{remark}
In return-map language the defect is now transparent.
Colors $0$ and $1$ retain a primitive clock, but color~$2$ loses it because the update $k\mapsto k-2$ splits $\mathbb Z_m$ into two parity classes before any carry can reconnect them.
The even construction will therefore modify only the low-layer directions that feed this clock, with the goal of restoring a primitive carry mechanism without breaking the other two colors.
\end{remark}

\begin{proof}[Proof of Proposition~\ref{prop:even-shatter}]
The formulas for $f_0$ and $f_1$ are unchanged from the odd case, so those two colors are still Hamilton.
For $f_2$, the return map is
\[
F_2(i,k)=(i+2-2\mathbf 1_{k=0},\,k-2).
\]
Because $m$ is even, the update $k\mapsto k-2$ preserves the parity of $k$, so $P_0$ splits into the odd-$k$ half and the even-$k$ half.

On the odd-$k$ half, the second coordinate runs through all odd residues with period $m/2$ and never hits $0$.
So over one full $k$-cycle the first coordinate gains $2\cdot(m/2)=m\equiv 0$, so every orbit there has length $m/2$.
Since the odd-$k$ half has size $m^2/2$, it contributes
\[
\frac{m^2/2}{m/2}=m
\]
cycles.

On the even-$k$ half, the second coordinate again has period $m/2$, but now it hits $0$ exactly once per such period.
Therefore over one full $k$-cycle the net change in $i$ is
\[
2\left(\frac m2-1\right)=m-2\equiv -2 \pmod m.
\]
So the induced return map after $m/2$ steps is translation by $-2$ on $\mathbb Z_m$, which has exactly $\gcd(m,2)=2$ cycles.
Consequently the even-$k$ half contributes $2$ cycles.

Altogether $f_2$ decomposes into $m+2$ directed cycles.
\end{proof}

\section{Even \texorpdfstring{$m$}{m}: repairing the clock with Route~E}\label{sec:even}

The odd construction shows what the finished picture should look like: after the layer drift is removed, the dynamics should reduce to a primitive clock together with its carry.
When $m$ is even, Corollary~\ref{cor:parity-barrier} rules out any Kempe-from-canonical derivation of such a coloring.
The even case is therefore a direct design problem.
We build a low-layer direction assignment, Route~E, whose return maps are finite-defect perturbations of the same odometer mechanism and whose remaining defects can be organized by one more first-return reduction to lane transversals.

The first step is to isolate the clock-and-carry criterion abstractly.
On $P_0\cong (\mathbb Z_m)^2$, a return map of the form
\[
(i,k)\longmapsto (i+\alpha(k),\ k+d)
\]
contains two pieces of data: a clock step $d$ and a carry profile $\alpha$.
If the clock is primitive and the total carry is primitive, then the whole map is a single cycle.
That is the structural target that Route~E is designed to recover.

\begin{lemma}[Clock-and-carry criterion]\label{lem:skew-product}
Fix $m\ge 1$, a step $d\in \mathbb Z_m$ with $\gcd(d,m)=1$, and a function $\alpha:\mathbb Z_m\to\mathbb Z_m$.
Define
\[
F(i,k):=(i+\alpha(k),\,k+d).
\]
Then $F$ is a permutation and
\[
F^m(i,k)=(i+\Delta,\,k),\qquad \Delta:=\sum_{t\in\mathbb Z_m}\alpha(t).
\]
Consequently, every orbit of $F$ has length $m^2/\gcd(\Delta,m)$, so $F$ is a single $m^2$-cycle iff $\gcd(\Delta,m)=1$.
\end{lemma}

\begin{proof}
Since $d$ is a unit, $k\mapsto k+d$ is an $m$-cycle and $F$ is bijective with inverse $(i,k)\mapsto(i-\alpha(k-d),k-d)$.
Iterating gives
\[
F^t(i,k)=\Bigl(i+\sum_{r=0}^{t-1}\alpha(k+rd),\,k+td\Bigr).
\]
At $t=m$ the second coordinate returns to $k$ and the sum ranges over all residues of $\mathbb Z_m$, giving $\Delta$.
The orbit length follows from the order of translation by $\Delta$ on the first coordinate.
\end{proof}

For the affine odd construction, Lemma~\ref{lem:skew-product} simply repackages the odometer story in linear form: the second coordinate is the clock and the first accumulates the carry.
When $m$ is even, color~$2$ fails precisely because the clock step $-2$ is not a unit.
So the problem is not to search blindly for another coloring, but to repair the low layers until a primitive clock and a primitive total carry reappear, while keeping the defect set finite and geometrically explicit.

The sign-product barrier shows that no Kempe-from-canonical derivation can do this in even modulus.
Accordingly, we introduce the Route~E \emph{direction assignment} on the low layers $S\in\{0,1,2\}$ and keep all higher layers canonical.

For $v=(i,j,k)\in V$, write $S(v)=i+j+k\pmod m$.
A direction triple $(d_0,d_1,d_2)\in\{0,1,2\}^3$ means that color $c$ uses the arc $v\mapsto v+e_{d_c}$.
At this stage we specify only a direction assignment in the sense of Definition~\ref{def:direction-assignment}; the fact that the associated maps are permutations, and hence form a coloring, is proved later in Theorem~\ref{thm:routeE-even-main}.

\begin{definition}[Route~E layer-$0$ defect families]\label{def:routeE-layer0-families}
Let $m\ge 6$ be even.
On the layer $\{S=0\}$ define the following affine defect families; the subscripts record the direction triple assigned there.

\smallskip
\noindent\textbf{Case I: $m\equiv 0,2\pmod 6$.}
\begin{align*}
X_{102}&:=\{(0,0,0)\}\cup\{(i,1,m-1-i):1\le i\le m-3\}\cup\{(m-1,2,m-1)\},\\
X_{021}&:=\{(0,1,m-1)\}\cup\{(i,m-i,0):1\le i\le m-3\}\cup\{(m-1,0,1)\},\\
X_{210}&:=\{(0,j,m-j):2\le j\le m-1\}\cup\{(1,0,m-1)\}.
\end{align*}

\smallskip
\noindent\textbf{Case II: $m\equiv 4\pmod 6$.}
\begin{align*}
Y_{102}&:=\{(0,0,0)\}\cup\{(i,1,m-1-i):2\le i\le m-3\}\cup\{(m-1,2,m-1)\},\\
Y_{021}&:=\{(0,1,m-1)\}\cup\{(i,m-i,0):2\le i\le m-3\}\cup\{(m-1,0,1)\},\\
Y_{210}&:=\{(0,j,m-j):2\le j\le m-1\}\cup\{(1,0,m-1)\}\\
&\hspace{2em}\cup\{(1,j,m-1-j):2\le j\le m-2\}\cup\{(2,0,m-2),(2,m-1,m-1)\}.
\end{align*}
\end{definition}

\begin{definition}[Explicit Route~E direction assignment]\label{def:routeE}
Let $m\ge 6$ be even. Define a direction triple $\delta(v)=(d_0(v),d_1(v),d_2(v))$ as follows.
The layer-by-layer structure is summarized below; the formal rules appear in the subsequent paragraphs.

\smallskip
\begin{center}
\footnotesize
\begin{tabular}{@{}lll@{}}
\toprule
layer & rule & complexity \\
\midrule
$S\ge 3$ & canonical $(0,1,2)$ & none \\
$S=2$ & two branches, split on $j=0$ vs.\ $j\ne 0$ & $2$ triples \\
$S=1$ & two branches, split on $i=0$ vs.\ $i\ne 0$ & $2$ triples \\
$S=0$ & six branches, depending on $m\bmod 6$ & $\le 6$ triples \\
\bottomrule
\end{tabular}
\end{center}
\smallskip

\noindent
The design principle is as follows.
Layers $S\ge 3$ keep the canonical triple, so the return map on $P_0$ sees their contribution as a uniform translation.
Layers $1$ and $2$ each introduce one simple branch: they test whether a single coordinate vanishes after the preceding step and swap two directions if so.
Layer $0$ contains the nondefault defect scaffold.
In Case~I this scaffold is exactly the family data from Definition~\ref{def:routeE-layer0-families}; when $m\equiv 4\pmod 6$, the $Y_{210}$ enlargement supplies the Case~II repair family.
That repair family changes the interior height ordering for colors $1$ and $0$, breaks the residue-$3$ obstruction that persists in the primary geometry, and leaves only a finite splice problem on the resulting arithmetic family-blocks.

\paragraph{Layers $S\notin\{0,1,2\}$.}
Use the canonical triple $(d_0,d_1,d_2)=(0,1,2)$.

\paragraph{Layer $S=1$.}
Use
\[
(d_0,d_1,d_2)=
\begin{cases}
(1,0,2), & i=0,\\
(2,0,1), & i\ne 0.
\end{cases}
\]

\paragraph{Layer $S=2$.}
Use
\[
(d_0,d_1,d_2)=
\begin{cases}
(2,1,0), & j=0,\\
(0,1,2), & j\ne 0.
\end{cases}
\]

\paragraph{Layer $S=0$, case $m\equiv 0,2\pmod 6$.}
Use
\[
(d_0,d_1,d_2)=
\begin{cases}
(1,0,2), & v\in X_{102},\\
(0,2,1), & v\in X_{021},\\
(2,1,0), & v\in X_{210},\\
(0,1,2), & v=(m-2,1,1),\\
(2,0,1), & v=(m-2,2,0),\\
(1,2,0), & \text{otherwise.}
\end{cases}
\]
\smallskip\noindent
where $X_{102},X_{021},X_{210}$ are the Case~I layer-$0$ defect families from Definition~\ref{def:routeE-layer0-families}.

\paragraph{Layer $S=0$, case $m\equiv 4\pmod 6$.}
Use
\[
(d_0,d_1,d_2)=
\begin{cases}
(1,0,2), & v\in Y_{102},\\
(0,2,1), & v\in Y_{021},\\
(2,1,0), & v\in Y_{210},\\
(0,1,2), & v\in\{(1,1,m-2),(m-2,1,1)\},\\
(2,0,1), & v\in\{(1,m-1,0),(m-2,2,0)\},\\
(1,2,0), & \text{otherwise.}
\end{cases}
\]
where $Y_{102},Y_{021},Y_{210}$ are the Case~II layer-$0$ defect families from Definition~\ref{def:routeE-layer0-families}.
This completes the definition: set $f_c(v)=v+e_{d_c(v)}$; these are the associated color maps whose $m$-step behaviour on $P_0$ is analyzed below.
\end{definition}

\begin{remark}[Geometric summary of the layer-$0$ families]\label{rem:routeE-layer0-geometry}
The default layer-$0$ triple is $(1,2,0)$; the named families are the thin affine tracks where this default is overridden.
In Case~I, $X_{102},X_{021},X_{210}$ are, up to endpoint corrections, the three lines $j=1$, $k=0$, and $i=0$ inside $P_0$.
Thus the primary geometry consists of three one-dimensional defect tracks in an otherwise uniform background.
In Case~II, the first two tracks are shortened near $i=1$, while the third track is enlarged by the adjacent affine line $i=1$ together with two boundary points.
We call that enlargement of the third track the \emph{Case~II repair family}.
It is the only new layer-$0$ ingredient in Case~II, and later sections show that it changes the interior return dynamics rather than merely patching boundary points.
After parameterizing $P_0$ by $(i,k)$ in Appendix~\ref{app:routeE-derivation}, these same tracks become the sets $A,B,C$ together with the isolated points $D,E$.
\end{remark}

\begin{lemma}[Layer-$0$ partition]\label{rem:layer0-partition}
In each parity class ($m\equiv 0,2\pmod 6$ and $m\equiv 4\pmod 6$), the listed special sets ($X_{102},X_{021},X_{210}$ or $Y_{102},Y_{021},Y_{210}$), the isolated exceptional vertices, and the default branch are pairwise disjoint and together partition the layer $\{S=0\}$.
Hence Definition~\ref{def:routeE} assigns exactly one direction triple to every vertex of $D_3(m)$.
\end{lemma}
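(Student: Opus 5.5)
Everything happens inside $P_0$, which we parameterize by $(i,k)\in\Zm^2$ with $j\equiv -i-k$. The plan is to rewrite each special set as a subset of $\Zm^2$, read off its $i$-coordinates and $k$-coordinates, and check disjointness pairwise. Since the default branch is defined as the complement (``otherwise''), the partition and well-definedness follow once the listed sets are pairwise disjoint. Every listed point has $i+j+k\equiv 0$ by inspection, so all of them lie in $P_0$.

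\textbf{Case I.} In $(i,k)$ form the sets are:
\begin{itemize}[leftmargin=2.2em]
\item $X_{102}$: the points $(0,0)$, $(m-1,m-1)$, and $(i,m-1-i)$ for $1\le i\le m-3$. The last family has $j=1$ and $k\in\{2,\dots,m-2\}$.
\item $X_{021}$: the points $(0,m-1)$, $(m-1,1)$, and $(i,0)$ for $1\le i\le m-3$.
\item $X_{210}$: the points $(0,m-j)$ for $2\le j\le m-1$, so $k\in\{1,\dots,m-2\}$, together with $(1,m-1)$.
\item The exceptional points $D=(m-2,1)$ and $E=(m-2,0)$.
\end{itemize}

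We then compare the sets by coordinates.
\begin{itemize}[leftmargin=2.2em]
\item The $i=0$ points are $(0,0)\in X_{102}$, $(0,m-1)\in X_{021}$, and $(0,k)$ with $1\le k\le m-2$ in $X_{210}$. These are distinct.
\item For $1\le i\le m-3$, the point of $X_{102}$ has $k=m-1-i\ge 2$, while the point of $X_{021}$ has $k=0$. The only non-zero-$i$ point of $X_{210}$ is $(1,m-1)$, and this differs from $(1,m-2)\in X_{102}$ and from $(1,0)\in X_{021}$.
\item The $i=m-1$ points are $(m-1,m-1)$ and $(m-1,1)$. These are distinct because $m\ge 6$.
\item The $i=m-2$ points are exactly $D$ and $E$, since the ranges $1\le i\le m-3$ exclude $m-2$.
\end{itemize}
This shows the five sets are pairwise disjoint.

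\textbf{Case II.} The same bookkeeping applies with two changes. First, the interior ranges of $Y_{102}$ and $Y_{021}$ start at $i=2$, which frees the column $i=1$. Second, $Y_{210}$ gains new points:
\begin{itemize}[leftmargin=2.2em]
\item $(1,m-1-j)$ for $2\le j\le m-2$, so $k\in\{1,\dots,m-3\}$;
\item $(1,m-1)$;
\item $(2,m-2)$ and $(2,m-(m-1)-(m-1))=(2,m-3)$, since $2+(m-1)+k\equiv 0$ gives $k\equiv m-3$.
\end{itemize}
The new exceptional points are $(1,1,m-2)\mapsto(1,m-2)$ and $(1,m-1,0)\mapsto(1,0)$.

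The column $i=1$ is therefore covered as follows: $k\in\{1,\dots,m-3\}$ and $k=m-1$ lie in $Y_{210}$, $k=m-2$ is the exceptional point $(1,m-2)$, and $k=0$ is the exceptional point $(1,0)$. These are all distinct, and no other set has $i=1$.

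The column $i=2$ contains $(2,m-3)\in Y_{102}$ and $(2,0)\in Y_{021}$. This clashes with the $Y_{210}$ point $(2,m-3)$, so at this step I expect to need to recheck the definition.

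This clash is the one delicate point. If $(2,m-1,m-1)$ really has $S=m+0\equiv 0$ with $k=m-1$, then in $(i,k)$ coordinates it is $(2,m-1)$, not $(2,m-3)$. I will recompute it carefully from $(i,j,k)$ directly, since $k$ is given explicitly as $m-1$. Doing so gives $Y_{210}\ni(2,m-2),(2,m-1)$, which are distinct from $(2,m-3)$ and $(2,0)$. The remaining columns $i=0$, $i\ge 3$, and $i=m-2,m-1$ are handled exactly as in Case I.

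Overall, the main obstacle is purely this bookkeeping: translating each $(i,j,k)$ triple correctly, handling the boundary indices near $i\in\{1,2,m-2,m-1\}$, and using $m\ge 6$ so that the values $1$, $m-3$, $m-2$, $m-1$ are all distinct. I would organize the verification as a table indexed by the column $i$ listing the occupied $k$ values, which makes disjointness visible at a glance.
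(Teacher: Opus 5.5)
Your argument is correct, but it organizes the verification differently from the paper. The paper argues geometrically. Up to endpoint corrections the three families lie on the lines $j=1$, $k=0$, $i=0$ of $P_0$, so a collision can only occur at a pairwise intersection of these lines. The paper checks which family owns each intersection point and dismisses the remaining endpoint corrections and isolated vertices ``by inspection of the coordinate ranges.''

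You instead pass to the $(i,k)$ parameterization, which the paper only introduces later for the sets $A,\dots,E$ in Appendix~\ref{app:routeE-derivation}. You then sort every listed point by its $i$-column. Disjointness becomes distinctness of a handful of $k$-values in each of the columns $i=0$, $1\le i\le m-3$, $i=m-2$, $i=m-1$, with $i=1$ and $i=2$ split off in Case~II.

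The two routes buy different things. The paper's route is shorter and explains structurally why overlaps can only arise near line intersections. Yours is completely mechanical and treats the endpoint corrections on the same footing as the interior points. It also makes explicit some checks the paper leaves implicit. For instance, the Case~II repair track $\{(1,k):1\le k\le m-3\}\cup\{(1,m-1)\}$ avoids the new isolated vertices $(1,1,m-2)$ and $(1,m-1,0)$, i.e.\ $(i,k)=(1,m-2),(1,0)$, so the whole column $i=1$ is special. Your enumeration also avoids a small slip in the paper's list of intersection points: the lines $i=0$ and $k=0$ meet at $(0,0,0)$, not at $(1,0,m-1)$. The paper's conclusion survives because $(0,0,0)$ is checked separately.

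One presentational fix. Your first conversion of $(2,m-1,m-1)$ to $(2,m-3)$ is an arithmetic error: $k=m-1$ is given explicitly, and $2+(m-1)+k\equiv 0$ forces $k\equiv m-1$. So the ``clash'' with $(2,m-3)\in Y_{102}$ is spurious. In a final write-up, simply record $Y_{210}\ni(2,m-2),(2,m-1)$. Column $i=2$ then carries the four distinct values $k=0,\,m-3,\,m-2,\,m-1$, coming from $Y_{021}$, $Y_{102}$, $Y_{210}$, $Y_{210}$ respectively.
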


\begin{proof}
Exhaustiveness is immediate from the final ``otherwise'' clause in each layer-$0$ rule, so only disjointness needs checking.
In Case~I, away from their endpoint corrections, the three families lie on the affine lines $j=1$, $k=0$, and $i=0$ inside $P_0$.
The possible pairwise intersections of those three lines in $P_0$ are $(0,1,m-1)$, $(1,0,m-1)$, and $(m-1,1,0)$.
By construction, the first belongs only to $X_{021}$, the second only to $X_{210}$, and the third to none of $X_{102},X_{021},X_{210}$.
The remaining listed exceptional vertices $(0,0,0)$, $(m-1,2,m-1)$, $(m-1,0,1)$, $(m-2,1,1)$, and $(m-2,2,0)$ are outside the other special families by inspection of the coordinate ranges.
In Case~II, the same endpoint conventions separate $Y_{102}\subseteq\{j=1\}$, $Y_{021}\subseteq\{k=0\}$, and the original part of $Y_{210}\subseteq\{i=0\}$, while the added Case~II repair family lies on $i=1$ together with the two boundary points $(2,0,m-2)$ and $(2,m-1,m-1)$.
The ranges $2\le i\le m-3$ and $2\le j\le m-2$ exclude any overlap of that added $i=1$ track with $Y_{102}$ or $Y_{021}$, and the two added boundary points are also outside $Y_{102}\cup Y_{021}$.
Thus the listed special sets are pairwise disjoint in both parity classes.
\end{proof}

Once stated, the construction is verified purely combinatorially.

\begin{definition}[Stall]\label{def:stall}
A \emph{stall} is a step at which the return map departs from the generic (uniform-translation) branch.
\end{definition}

\noindent
The number of stalls per orbit is bounded independently of~$m$ (see the orbit traces in Appendix~\ref{app:routeE-cycle}).

Below we develop the return-map framework (\S\ref{sec:framework}), prove color~$2$ in full (\S\ref{sec:even-r2}), and assemble the main even-case theorem.
Example~\ref{ex:m6-color2} illustrates the full machinery on the smallest admissible case $m=6$.

\subsection{Framework for the Route~E return maps}\label{sec:framework}

The appendices contain the detailed tables and the longer case-by-case bookkeeping, but the logical mechanism of the even proof can be stated in the main text.
Conceptually, the argument proceeds in four steps:
\begin{enumerate}[label=\textup{(\roman*)},leftmargin=2.5em]
\item a primary geometry on the low layers;
\item a precise obstruction in that geometry;
\item a short no-go argument showing that the remaining Case~II boundary corrections alone do not work;
\item the actual repaired Route~E construction followed by a finite splice.
\end{enumerate}
The first input is a three-step transducer extracted from Definition~\ref{def:routeE}.

\begin{proposition}[Three-step transducer reduction]\label{prop:routeE-transducer}
Parameterize
\[
P_0=\{(i,j,k)\in \mathbb Z_m^3:i+j+k\equiv 0\}
\]
by
\[
v(i,k):=(i,-i-k,k),\qquad (i,k)\in \mathbb Z_m^2.
\]
For a color $c\in\{0,1,2\}$ and a start point $v(i,k)\in P_0$, write the first three color-$c$ directions on the layers $S=0,1,2$ as
\[
\omega_c(i,k)=(a_c,b_c,c_c)\in\{0,1,2\}^3.
\]
Then the return maps satisfy
\[
R_0(i,k)=\bigl(i+N_0(\omega_0)-3,\ k+N_2(\omega_0)\bigr),
\]
\[
R_1(i,k)=\bigl(i+N_0(\omega_1),\ k+N_2(\omega_1)\bigr),
\]
\[
R_2(i,k)=\bigl(i+N_0(\omega_2),\ k+N_2(\omega_2)-3\bigr),
\]
where $N_r(\omega)$ counts the occurrences of direction $r$ in the word $\omega$.
Moreover:
\begin{enumerate}[label=\textup{(\roman*)},leftmargin=2.5em]
\item for color $1$, one always has $b_1=0$ and $c_1=1$;
\item for colors $0$ and $2$, the layer-$1$ direction depends only on whether the post-step-$1$ $i$-coordinate equals $0$, and the layer-$2$ direction depends only on whether the post-step-$2$ $j$-coordinate equals $0$.
\end{enumerate}
Appendix~\ref{app:routeE-derivation} therefore reduces the derivation of the piecewise formulas for $R_0,R_1,R_2$ to a finite table evaluation on the layer-$0$ partition.
\end{proposition}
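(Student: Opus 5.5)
The plan is a direct unwinding of Definition~\ref{def:routeE} along one return orbit, with displacements tracked in the coordinates $(i,k)$ of the parameterization $v(i,k)$. Since $j\equiv -i-k$ on $P_0$, a return point in $P_0$ is determined by its $i$- and $k$-coordinates. It therefore suffices to count how many times the orbit moves in direction $0$ (which changes $i$) and in direction $2$ (which changes $k$); steps in direction $1$ only change $j$ and are invisible in these coordinates.

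\textbf{Step 1: split the return word.} Starting from $v(i,k)\in P_0$, the color-$c$ orbit makes exactly $m$ steps before returning to $P_0$, since $S$ increases by $1$ per step (as in Lemma~\ref{lem:return}). These steps fall into two groups.
\begin{enumerate}[label=\textup{(\alph*)},leftmargin=2.5em]
\item The first three steps lie on layers $S=0,1,2$ and have directions $\omega_c=(a_c,b_c,c_c)$.
\item The remaining $m-3$ steps lie on layers $S=3,\dots,m-1$, where the triple is canonical $(0,1,2)$, so color $c$ always uses direction $c$.
\end{enumerate}
This is where $m\ge 6$ enters: it guarantees these are genuinely distinct layers and that the canonical block is nonempty.

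\textbf{Step 2: compute the return maps.}
\begin{itemize}
\item For $c=0$, the tail contributes $m-3\equiv -3$ to $i$ and nothing to $k$. The total is therefore $i\mapsto i+N_0(\omega_0)-3$ and $k\mapsto k+N_2(\omega_0)$.
\item For $c=1$, the tail moves only $j$. This gives $R_1(i,k)=(i+N_0(\omega_1),k+N_2(\omega_1))$.
\item For $c=2$, the tail contributes $-3$ to $k$, which gives the formula for $R_2$.
\end{itemize}
Here I use that each occurrence of direction $r$ adds $e_r$, so the net change of $i$ (respectively $k$) over the word $\omega$ is exactly $N_0(\omega)$ (respectively $N_2(\omega)$). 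No modular subtlety arises, because everything is additive in $\mathbb Z_m$.

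\textbf{Step 3: the structural claims.}
\begin{itemize}
\item For (i), I read off the layer-$1$ and layer-$2$ rules of Definition~\ref{def:routeE}. On $S=1$ both branches $(1,0,2)$ and $(2,0,1)$ have $d_1=0$. On $S=2$ both branches $(2,1,0)$ and $(0,1,2)$ have $d_1=1$. Hence $b_1=0$ and $c_1=1$ regardless of the start point.
\item For (ii), the layer-$1$ rule branches only on whether the current vertex has $i=0$. That vertex is the image of the start point after step~$1$, so $b_c$ is a function of the post-step-$1$ $i$-coordinate. Likewise, the layer-$2$ rule branches only on $j=0$ at the post-step-$2$ vertex.
\end{itemize}
Consequently $\omega_c$ is determined by the layer-$0$ triple (via the partition of Lemma~\ref{rem:layer0-partition}) together with two zero-tests. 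This is what reduces the piecewise formulas to a finite table evaluation.

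I do not expect a genuine obstacle here; the statement is bookkeeping. The only point needing care is Step~1: making sure the layers $S=0,1,2$ are each visited exactly once per return and that the canonical layers number exactly $m-3$. I will also make sure the branch tests in (ii) are evaluated at the correct intermediate vertex rather than the start point. For well-definedness I will cite Lemma~\ref{rem:layer0-partition}, since $\omega_c$ must be a function of the start point.
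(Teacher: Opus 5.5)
Your proposal is correct and follows the paper's proof essentially step for step. Like the paper, you split the $m$-step return into the three low-layer steps plus $m-3$ canonical steps in direction $c$, count the $e_0$ and $e_2$ displacements, and read (i) and (ii) directly off the layer-$1$ and layer-$2$ rules of Definition~\ref{def:routeE}. Two small remarks: the paper also writes out the explicit tests $i_1=i+\mathbf 1_{a_c=0}$ and $j_2=j+\mathbf 1_{a_c=1}+\mathbf 1_{b_c=1}$, and the layer split needs only $m\ge 3$, not $m\ge 6$, since an empty canonical block is harmless.
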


\begin{proof}
After the first three steps from $P_0$, the orbit lies on layer $S=3$.
All subsequent layers are canonical, so until the next return to $P_0$ the map follows direction $c$ for exactly $m-3$ further steps.
This yields the displacement formulas above.

Now write $j=-i-k$ on $P_0$.
After step $1$, the $i$-coordinate is
\[
i_1=i+\mathbf 1_{a_c=0}.
\]
By Definition~\ref{def:routeE}, on layer $S=1$ the color-$1$ direction is always $0$, the color-$0$ direction is $1$ if $i_1=0$ and $2$ otherwise, and the color-$2$ direction is $2$ if $i_1=0$ and $1$ otherwise.
So part~\textup{(i)} is immediate, and for colors $0$ and $2$ the second letter $b_c$ is determined by the single test $i_1=0$.

After step $2$, the $j$-coordinate is
\[
j_2=j+\mathbf 1_{a_c=1}+\mathbf 1_{b_c=1}.
\]
Again by Definition~\ref{def:routeE}, on layer $S=2$ the color-$0$ direction is $2$ if $j_2=0$ and $0$ otherwise, while the color-$2$ direction is $0$ if $j_2=0$ and $2$ otherwise.
Thus for colors $0$ and $2$ the third letter $c_c$ is determined by the single test $j_2=0$.
The appendix tables simply evaluate these tests on the finite partition of $P_0$ induced by the layer-$0$ rule.
\end{proof}

\begin{lemma}[Generic three-step words and bulk vectors]\label{lem:routeE-generic}
Consider the Route~E direction assignment.
Suppose a start point $(i,k)\in P_0\cong\Zm^2$ has the base layer-$0$ triple $(1,2,0)$ and that, along the first two steps of the orbit, neither test $i_1=0$ on layer $1$ nor $j_2=0$ on layer $2$ is triggered.
Then the three low-layer words are constant:
\[
\omega_0(i,k)=120,\qquad \omega_1(i,k)=201,\qquad \omega_2(i,k)=012.
\]
Consequently the return maps follow the bulk branches
\[
R_0(i,k)=(i-2,k+1),\qquad R_1(i,k)=(i+1,k+1),\qquad R_2(i,k)=(i+1,k-2).
\]
\end{lemma}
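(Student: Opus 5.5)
The plan is to read off each color's three letters directly from Definition~\ref{def:routeE}, using the hypotheses that no layer-$1$ or layer-$2$ test fires. Then I substitute the resulting words into the displacement formulas of Proposition~\ref{prop:routeE-transducer}.

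\textbf{Layer $0$.} The base triple $(1,2,0)$ means $a_0=1$, $a_1=2$, $a_2=0$.

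\textbf{Layer $1$.} The non-triggered branch is $i_1\neq 0$, where the layer-$1$ triple is $(2,0,1)$. So $b_0=2$, $b_1=0$, $b_2=1$. This agrees with part~(i) of Proposition~\ref{prop:routeE-transducer} for color $1$.

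\textbf{Layer $2$.} The non-triggered branch is $j_2\neq 0$, where the triple is canonical $(0,1,2)$. So $c_0=0$, $c_1=1$, $c_2=2$.

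Concatenating gives $\omega_0=120$, $\omega_1=201$, $\omega_2=012$. One point needs care. The tests $i_1=0$ and $j_2=0$ are evaluated color by color along each color's own orbit. I read the hypothesis as saying the test fails for each color's trajectory, which is what the statement intends. I should also confirm that for color $1$ the layer-$1$ direction is $0$ regardless of the branch, so the hypothesis matters only for colors $0$ and $2$ there. At layer $2$, color $1$ uses direction $1$ in both branches.

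Next I count letters. Each of the three words contains each direction exactly once, so $N_0(\omega_c)=N_2(\omega_c)=1$ for every $c$. Substituting into Proposition~\ref{prop:routeE-transducer}:
\[
R_0(i,k)=(i+1-3,\,k+1)=(i-2,\,k+1),\qquad R_1(i,k)=(i+1,\,k+1),
\]
\[
R_2(i,k)=(i+1,\,k+1-3)=(i+1,\,k-2).
\]
As a sanity check, I would verify the displacement bookkeeping in Proposition~\ref{prop:routeE-transducer}. Color $c$ takes $m-3$ canonical steps in direction $c$ on layers $3,\dots,m-1$. For color $0$ these add $m-3\equiv-3$ to $i$, and for color $2$ they add $-3$ to $k$. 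This matches the $-3$ offsets. Every step is routine, so I do not expect any real obstacle. The only place to be careful is matching the branch conventions ($i\neq0$ on layer $1$, $j\neq0$ on layer $2$) to the correct triples.
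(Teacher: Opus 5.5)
Your proposal is correct and follows essentially the same route as the paper's proof. You read off the color-$c$ letters from the generic triples $(1,2,0)$, $(2,0,1)$, $(0,1,2)$ on layers $0,1,2$, and then substitute $N_0(\omega_c)=N_2(\omega_c)=1$ into the displacement formulas of Proposition~\ref{prop:routeE-transducer}. Your per-color reading of the non-triggering hypothesis is the intended one, since colors $0$ and $2$ have different post-step-$1$ coordinates, while color $1$ needs no hypothesis at all.
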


\begin{proof}
The base layer-$0$ triple $(1,2,0)$ means that on layer $0$ the colors $0,1,2$ use directions $1,2,0$, respectively.
If the post-step-$1$ test $i_1=0$ is not triggered, then the layer-$1$ triple is the generic one $(2,0,1)$.
If the post-step-$2$ test $j_2=0$ is not triggered, then the layer-$2$ triple is the generic one $(0,1,2)$.
Reading off the color-$c$ letters from these three generic triples yields the words $120,201,012$.
The displacement formulas now follow immediately from Proposition~\ref{prop:routeE-transducer}.
\end{proof}

\begin{remark}[structure of the defect sets]\label{rem:routeE-design}
Lemma~\ref{lem:routeE-generic} explains the bulk of the construction: almost everywhere the low-layer word is constant, so the return maps are translations by the three displayed bulk vectors.
Non-generic behaviour can occur only for three reasons:
\begin{enumerate}[label=\textup{(\alph*)},leftmargin=2em]
\item the start point already lies on one of the special layer-$0$ defect lines or endpoint corrections;
\item after the first step, the test $i_1=0$ is triggered on layer $1$;
\item after the second step, the test $j_2=0$ is triggered on layer $2$.
\end{enumerate}
In the $(i,k)$ coordinates on $P_0$, each of these conditions is affine, which is why the defect support in Lemma~\ref{lem:routeE-finite-defect} consists of affine lines plus finitely many isolated boundary points.
The boundary-point exceptions are tuned to cut the long arithmetic runs---maximal runs of consecutive lanes that share the same bulk itinerary---at finitely many bridge vertices and to record the finite splice permutation on the resulting ordered arithmetic family-blocks.
In particular, once the first-return formulas are known, the single-cycle question for the induced lane map reduces to the cycle structure of that finite splice permutation.
What changes at $m\equiv 4\pmod{6}$, however, is not only the last splice step.
In the natural primary geometry obtained by extending the Case~I layer-$0$ families to all even $m$, colors $0$ and $2$ already close, while color $1$ remains trapped in three residue-$3$ strands.
The Case~II repair family changes the interior height ordering for colors $1$ and $0$ before the final splice; Propositions~\ref{prop:primary-geometry-obstruction} and~\ref{prop:boundary-no-go} below make this necessity precise.
\end{remark}

The next lemma is the counting lemma used in all three color proofs.

\begin{lemma}[First-return counting]\label{lem:counting}
Let $F:X\to X$ be a map on a finite set $|X|=N$, and let $L\subseteq X$.
Assume that for every $x\in L$ the first return time
\[
\rho(x)=\min\{t\ge 1:F^t(x)\in L\}
\]
exists, and write
\[
T(x)=F^{\rho(x)}(x)\in L
\]
for the induced first-return map.
If
\begin{enumerate}[label=\textup{(\roman*)}]
\item $T$ is a single $|L|$-cycle on $L$, and
\item $\sum_{x\in L}\rho(x)=N$,
\end{enumerate}
then $F$ is a single $N$-cycle on $X$.
\end{lemma}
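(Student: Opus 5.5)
The plan is to follow a single $F$-orbit starting in $L$, concatenating the excursions between consecutive visits to $L$. Hypothesis~(ii) will show that this orbit closes up after exactly $N$ steps. Hypothesis~(i) will show that it visits $N$ distinct points, so it exhausts $X$. Note that $F$ is not assumed to be a bijection; bijectivity will come out of the argument.

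\textbf{Step 1: the orbit closes after $N$ steps.} Fix $x_0\in L$ and set $x_{s+1}=T(x_s)$. By~(i), $x_0,x_1,\dots,x_{|L|-1}$ are pairwise distinct, they exhaust $L$, and $x_{|L|}=x_0$. Define partial sums $r_0=0$ and $r_{s+1}=r_s+\rho(x_s)$. Since $T(x)=F^{\rho(x)}(x)$, induction gives $F^{r_s}(x_0)=x_s$. By~(ii), $r_{|L|}=\sum_{x\in L}\rho(x)=N$, so $F^N(x_0)=x_0$.

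\textbf{Step 2: the visit times to $L$.} By the definition of the first-return time, for $r_s<t<r_{s+1}$ the point $F^t(x_0)$ does not lie in $L$. Hence the times $t\in[0,N)$ with $F^t(x_0)\in L$ are exactly $r_0,\dots,r_{|L|-1}$. At these times the orbit sits at the distinct points $x_0,\dots,x_{|L|-1}$.

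\textbf{Step 3: the first $N$ orbit points are distinct.} This is the only step requiring care. Since $F^N(x_0)=x_0$, the orbit of $x_0$ is purely periodic; let $p$ be its minimal period, so $p\mid N$. Suppose $p<N$. Then $F^p(x_0)=x_0\in L$, so $p$ is a visit time in $(0,N)$. By Step~2, $p=r_s$ for some $1\le s\le |L|-1$, which gives $x_s=x_0$. This contradicts the distinctness of the $T$-cycle. Hence $p=N$, and the points $F^t(x_0)$ for $0\le t<N$ are pairwise distinct.

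\textbf{Conclusion.} Since $|X|=N$, these $N$ distinct points are all of $X$. On them $F$ acts as $F^t(x_0)\mapsto F^{t+1}(x_0)$, indices taken mod $N$. So $F$ is a bijection of $X$ consisting of a single $N$-cycle.
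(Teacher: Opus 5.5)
Your proposal is correct and follows the same route as the paper: fix $x_0\in L$, concatenate the excursions along the $T$-cycle, and use (ii) to get $F^N(x_0)=x_0$. Your Steps~2--3 also supply the minimal-period argument that the paper leaves implicit in the sentence ``so the $F$-orbit of $x_0$ is a cycle of length $N_0$'': no earlier return to $x_0$ is possible, because visits to $L$ before time $N$ occur only at the times $r_s$, where the $x_s$ are distinct.
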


\begin{remark}
Lemma~\ref{lem:counting} is the return-map principle applied a second time.
On $P_0$ the state space has already dropped from size $m^3$ to size $m^2$, but the bulk dynamics are still almost pure clock motion.
The natural next step is therefore to cut once more to explicit lane transversals and study only the induced carry.
That is the point at which the even proof becomes one-dimensional.
\end{remark}

\begin{proof}
Pick $x_0\in L$.
Because $T$ is a cycle on all of $L$, the return points along the $F$-orbit of $x_0$ are
\[
x_0,\ T(x_0),\ T^2(x_0),\ \dots,\ T^{|L|-1}(x_0),\ x_0,
\]
and every point of $L$ appears exactly once before the sequence closes.

Set
\[
N_0:=\sum_{r=0}^{|L|-1}\rho\bigl(T^r(x_0)\bigr).
\]
By construction, after $N_0$ applications of $F$ one returns to $x_0$.
So the $F$-orbit of $x_0$ is a cycle of length $N_0$.
The hypothesis gives
\[
N_0=\sum_{x\in L}\rho(x)=N.
\]
Hence one $F$-cycle already has size $N=|X|$, so it contains all points of $X$.
Thus $F$ is a single $N$-cycle.
\end{proof}

For each color we now choose coordinates adapted to the \emph{bulk branch} of the return map, meaning the branch that applies away from a bounded defect set.
In those coordinates the generic step (also called the \emph{bulk} step and denoted $G$ in the orbit displays) advances one coordinate---the \emph{clock}---uniformly while preserving the other---the \emph{lane}.
This is again the odometer picture: the clock turns every time, while the lane changes only at the rare non-generic events.
Those non-generic events are the \emph{stalls} introduced above.
At a stall the lane shifts by a bounded amount, and the clock may gain or lose a bounded correction.
In the orbit displays below, $\xrightarrow{\bulk}$ or $\xrightarrow{\bulk^t}$ denotes one or $t$ consecutive generic steps, while $\xrightarrow{+1}$ or $\xrightarrow{+2}$ denotes a stall that raises the lane by $1$ or $2$.

Because the number of stalls per orbit is bounded independently of $m$, the two-dimensional dynamics on $P_0$ reduce to one-dimensional first-return maps on explicit transversals.
On those transversals, most lanes follow one arithmetic rule and only $O(1)$ boundary lanes require special treatment.

\begin{lemma}[Bulk coordinates expose the clock]\label{lem:routeE-bulkcoords}
Define linear maps $\Phi_c:P_0\cong \Zm^2\to \Zm^2$ by
\[
\Phi_1(i,k)=(i-k,\ k),
\]
\[
\Phi_2(i,k)=(2i+k,\ -i-k),
\]
\[
\Phi_0(i,k)=(i+2k,\ k).
\]
Let
\[
b_1=(1,1),\qquad b_2=(1,-2),\qquad b_0=(-2,1).
\]
Then each $\Phi_c$ is invertible over $\Zm$, and in the corresponding $(u,t)$-coordinates the bulk move $z\mapsto z+b_c$ becomes
\[
(u,t)\mapsto (u,t+1).
\]
\end{lemma}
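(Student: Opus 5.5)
The plan is a direct linear-algebra verification that uses only the linearity of each $\Phi_c$; no earlier result of the paper is needed. Write each $\Phi_c$ as a $2\times 2$ integer matrix $M_c$ acting on column vectors $(i,k)^{\mathsf T}$:
\[
M_1=\begin{pmatrix}1&-1\\0&1\end{pmatrix},\qquad
M_2=\begin{pmatrix}2&1\\-1&-1\end{pmatrix},\qquad
M_0=\begin{pmatrix}1&2\\0&1\end{pmatrix}.
\]

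First I will establish invertibility over $\Zm$ for every $m$, including even $m$, which is the case this lemma serves. A matrix over $\Zm$ is invertible exactly when its determinant is a unit. Here $\det M_1=1$, $\det M_0=1$ and $\det M_2=-2+1=-1$, all units in $\Zm$. For a concrete witness I can write down the integer inverses:
\[
M_1^{-1}=\begin{pmatrix}1&1\\0&1\end{pmatrix},\qquad
M_0^{-1}=\begin{pmatrix}1&-2\\0&1\end{pmatrix},\qquad
M_2^{-1}=\begin{pmatrix}1&1\\-1&-2\end{pmatrix}.
\]
These reduce mod $m$ to inverses of the $\Phi_c$. I will explicitly contrast this with $\psi_2$ in Proposition~\ref{prop:odd-odometer}, which needed $2$ to be invertible. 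No such requirement arises here.

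Second, I will compute the images of the bulk vectors:
\[
\Phi_1(1,1)=(1-1,\,1)=(0,1),\qquad
\Phi_2(1,-2)=(2-2,\,-1+2)=(0,1),\qquad
\Phi_0(-2,1)=(-2+2,\,1)=(0,1).
\]
By linearity, $\Phi_c(z+b_c)=\Phi_c(z)+(0,1)$. Writing $(u,t)=\Phi_c(z)$, the bulk move $z\mapsto z+b_c$ is therefore conjugated to $(u,t)\mapsto(u,t+1)$, as claimed. Combined with Lemma~\ref{lem:routeE-generic}, this says that on the generic branch the clock $t$ advances by one while the lane $u$ is fixed.

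There is no genuine obstacle here. The only point needing care is to use the determinant criterion, or the explicit integer inverses, rather than any division, so that the argument visibly holds for even $m$.
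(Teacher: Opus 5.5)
Your proposal is correct and matches the paper's proof: the paper also writes down $M_1,M_2,M_0$, uses the unit determinants $1,-1,1$ for invertibility, and checks $M_cb_c=(0,1)$. Your explicit integer inverses are correct, and they make the validity for even $m$ visible without relying on the determinant criterion.
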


\begin{proof}
The matrices of $\Phi_1,\Phi_2,\Phi_0$ are
\[
M_1=\begin{pmatrix}1&-1\\0&1\end{pmatrix},\qquad
M_2=\begin{pmatrix}2&1\\-1&-1\end{pmatrix},\qquad
M_0=\begin{pmatrix}1&2\\0&1\end{pmatrix}.
\]
Their determinants are $1,-1,1$, so each is a unit in $\Zm$ and each map is invertible.
A direct multiplication gives
\[
M_1b_1=(0,1),\qquad M_2b_2=(0,1),\qquad M_0b_0=(0,1),
\]
which is exactly the claim.
\end{proof}

\begin{lemma}[Finite-defect odometer normal form for Route~E]\label{lem:routeE-finite-defect}
For each color $c\in\{0,1,2\}$, let
\[
\widehat R_c:=\Phi_c\circ R_c\circ \Phi_c^{-1}.
\]
Then there is a defect set $E_c\subseteq \Zm^2$, contained in a finite union of affine lines together with finitely many isolated points, such that
\[
\widehat R_c(u,t)=(u,t+1)\qquad\text{for }(u,t)\notin E_c.
\]
More precisely:
\begin{enumerate}[label=\textup{(\roman*)},leftmargin=2.5em]
\item For color $1$, every defect step is one of
\[
(u,t)\mapsto (u+1,t),\qquad (u,t)\mapsto (u+2,t),
\]
and the defect set is contained in
\[
u+t=0,\qquad u+2t=m-1,
\]
and, when $m\equiv 4\pmod 6$, also
\[
u+t=1,
\]
together with finitely many isolated points.

\item For color $2$, every defect step is one of
\[
(u,t)\mapsto (u-1,t+2),\quad (u,t)\mapsto (u+1,t+1),\quad (u,t)\mapsto (u-1,t+1),
\]
\[
(u,t)\mapsto (u-2,t+2),\quad (u,t)\mapsto (u+1,t),
\]
and the defect set is contained in
\[
t=1,\qquad t=m-1,\qquad u+2t=0,\qquad u+t=m-1,
\]
together with finitely many isolated points.

\item For color $0$, every defect step is one of
\[
(u,t)\mapsto (u-2,t),\quad (u,t)\mapsto (u+3,t+3),\quad (u,t)\mapsto (u+2,t+2),
\]
\[
(u,t)\mapsto (u-1,t),\quad (u,t)\mapsto (u+1,t+2),\quad (u,t)\mapsto (u+1,t+1),
\]
and the defect set is contained in
\[
t=0,\qquad u=t+1,
\]
and, when $m\equiv 4\pmod 6$, also
\[
u=1+2t,
\]
together with finitely many isolated points.
\end{enumerate}
\end{lemma}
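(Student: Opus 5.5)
The plan is to derive Lemma~\ref{lem:routeE-finite-defect} from Proposition~\ref{prop:routeE-transducer} and Lemmas~\ref{lem:routeE-generic} and~\ref{lem:routeE-bulkcoords}. No new dynamics are needed, only a classification of where the three-letter word $\omega_c(i,k)$ deviates from its generic value and what displacement each deviation produces.

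\textbf{Step 1: locate the defects.} By Lemma~\ref{lem:routeE-generic}, $R_c(i,k)=(i,k)+b_c$ unless one of three things happens, as listed in Remark~\ref{rem:routeE-design}:
\begin{itemize}
\item[(a)] $v(i,k)$ lies in a special layer-$0$ family or is an isolated exceptional vertex;
\item[(b)] the test $i_1=0$ fires;
\item[(c)] the test $j_2=0$ fires.
\end{itemize}
I will write each condition as an affine equation in $(i,k)$. Under $v(i,k)=(i,-i-k,k)$, the families of Definition~\ref{def:routeE-layer0-families} give:
\begin{itemize}
\item[\textbullet] $X_{102}$ and $Y_{102}$ lie on $i+k=-1$;
\item[\textbullet] $X_{021}$ and $Y_{021}$ lie on $k=0$;
\item[\textbullet] $X_{210}$ lies on $i=0$;
\item[\textbullet] the Case~II repair track lies on $i=1$.
\end{itemize}
In each case finitely many endpoints are added or removed. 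Given the default first letter $a_c$, condition (b) reads $i+\mathbf 1_{a_c=0}=0$ and condition (c) reads $-i-k+\mathbf 1_{a_c=1}+\mathbf 1_{b_c=1}=0$. In the generic case both are single lines. Off the generic first letter they cut the special tracks in finitely many points, which I add to the isolated set.

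\textbf{Step 2: tabulate the displacements.} For each color, I will enumerate the finitely many combinations of (layer-$0$ triple, outcome of test (b), outcome of test (c)). Proposition~\ref{prop:routeE-transducer}(i) already fixes $b_1=0$ and $c_1=1$, so for color $1$ only the first letter varies. For each word $\omega$ I read off the displacement from the formulas $R_c=(i+N_0(\omega)-[c{=}0]\,3,\ k+N_2(\omega)-[c{=}2]\,3)$. Subtracting $b_c$ and applying the linear map $M_c$ of Lemma~\ref{lem:routeE-bulkcoords} converts this into a step in $(u,t)$. For example, if color $1$ uses first letter $0$ or $1$ instead of $2$, the displacement changes by $(1,-1)$ or $(0,-1)$ relative to $b_1$. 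Since $M_1(1,-1)=(2,-1)$ and $M_1(0,-1)=(1,-1)$, adding the bulk $(0,1)$ gives $(u,t)\mapsto(u+2,t)$ and $(u+1,t)$ respectively, matching part (i). I will produce the analogous lists for colors $2$ and $0$ and check that each is contained in the stated list. Allowing \emph{contained in} is harmless if some listed step never occurs.

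\textbf{Step 3: transport the defect lines.} Pulling each affine line back through $\Phi_c^{-1}$ gives the stated lines. For color $1$, $u+t=i$, so the line $u+t=0$ is $i=0$, which comes from $X_{210}$ or from test (b), and $u+t=1$ is the repair track $i=1$. For color $1$ we also have $u+2t=i+k$, so $u+2t=m-1$ is exactly the track $i+k=-1$. I will do the same for $\Phi_2$ and $\Phi_0$ and absorb all endpoint corrections into the finite isolated set.

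\textbf{Main obstacle.} The hardest part is the bookkeeping in Steps~1--2 for colors $0$ and $2$, where tests (b) and (c) interact with the layer-$0$ families and each test can fire in a form that depends on the first letter. I would organize this as a single table indexed by layer-$0$ branch versus the two test outcomes. I expect most entries to be either impossible or confined to the finitely many intersection points of two lines. Where a row of the table forms a genuine line, I must confirm that it maps to one of the listed lines. That check is what pins down the specific list for each color, including the extra $m\equiv4\pmod 6$ lines $u+t=1$ for color~$1$ and $u=1+2t$ for color~$0$.
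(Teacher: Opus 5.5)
Your proposal is correct and is essentially the paper's argument. The paper reads the exceptional branches and their supporting conditions ($i=0$, $i=1$, $i+k\equiv m-1$, $i+k\equiv 1$, $k=0$, $i=m-1$) off the closed-form formulas of Appendix~\ref{app:routeE} and pushes them through $\Phi_c$. You regenerate the same branch data from the transducer, which is exactly what the tables of Appendix~\ref{app:routeE-derivation} do, and then apply the same linear maps $M_c$.

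One small slip in Step~1 should be fixed. For color~$0$ on the $i=0$ track the first letter is $a_0=2$, so $i_1=i=0$ and test (b) fires along the whole track, not only at finitely many points. The resulting word $210$ has the bulk displacement $(i-2,k+1)$, so your Step~2 table absorbs this without producing a new defect line.
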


\begin{proof}
We compute directly from the piecewise formulas for $R_0,R_1,R_2$ recorded in Appendix~\ref{app:routeE}.

For color $1$, the generic branch is $(i,k)\mapsto(i+1,k+1)$, while the two exceptional branches are $(i,k)\mapsto(i+1,k)$ and $(i,k)\mapsto(i+2,k)$.
Under $\Phi_1(i,k)=(u,t)=(i-k,k)$ these become
\[
(i+1,k+1)\mapsto (u,t+1),\qquad (i+1,k)\mapsto (u+1,t),\qquad (i+2,k)\mapsto (u+2,t).
\]
The supporting conditions in Appendix~\ref{app:routeE} are $i=0$, $i=1$, and $i+k\equiv m-1$, plus finitely many exceptional boundary points.
These are exactly the affine conditions
\[
i=0 \iff u+t=0,\qquad i=1 \iff u+t=1,\qquad i+k\equiv m-1 \iff u+2t=m-1.
\]
This proves \textup{(i)}.

For color $2$, the generic branch is $(i,k)\mapsto(i+1,k-2)$.
The exceptional branches listed in Appendix~\ref{app:routeE} are
\[
(i+1,k-3),\qquad (i+2,k-3),\qquad (i,k-1),\qquad (i,k-2),\qquad (i+1,k-1).
\]
Under $\Phi_2(i,k)=(u,t)=(2i+k,-i-k)$ one obtains
\[
(i+1,k-2)\mapsto (u,t+1),
\]
\[
(i+1,k-3)\mapsto (u-1,t+2),\qquad (i+2,k-3)\mapsto (u+1,t+1),
\]
\[
(i,k-1)\mapsto (u-1,t+1),\qquad (i,k-2)\mapsto (u-2,t+2),\qquad (i+1,k-1)\mapsto (u+1,t).
\]
The supporting conditions in Appendix~\ref{app:routeE} are $i+k\equiv m-1$, $i+k\equiv 1$, $k=0$, and $i=m-1$, together with finitely many exceptional points.
These become
\[
i+k\equiv m-1 \iff t=1,
\]
\[
i+k\equiv 1 \iff t=m-1,
\]
\[
k=0 \iff u+2t=0,
\]
\[
i=m-1 \iff u+t=m-1.
\]
This proves \textup{(ii)}.

For color $0$, the generic branch is $(i,k)\mapsto(i-2,k+1)$.
The exceptional branches listed in Appendix~\ref{app:routeE} are
\[
(i-2,k),\qquad (i-3,k+3),\qquad (i-2,k+2),\qquad (i-1,k),\qquad (i-3,k+2),\qquad (i-1,k+1).
\]
Under $\Phi_0(i,k)=(u,t)=(i+2k,k)$ these become
\[
(i-2,k+1)\mapsto (u,t+1),
\]
\[
(i-2,k)\mapsto (u-2,t),\qquad (i-3,k+3)\mapsto (u+3,t+3),\qquad (i-2,k+2)\mapsto (u+2,t+2),
\]
\[
(i-1,k)\mapsto (u-1,t),\qquad (i-3,k+2)\mapsto (u+1,t+2),\qquad (i-1,k+1)\mapsto (u+1,t+1).
\]
The supporting conditions in Appendix~\ref{app:routeE} are $k=0$, $i+k\equiv 1$, and, in Case~II, $i=1$, together with finitely many exceptional points.
These become
\[
k=0 \iff t=0,
\]
\[
i+k\equiv 1 \iff u=t+1,
\]
\[
i=1 \iff u=1+2t.
\]
This proves \textup{(iii)}.
\end{proof}

\begin{remark}[origin of the Case~I\,/\,II split]
\label{rem:mod6-split}
The distinction between Case~I ($m\equiv 0,2\pmod 6$) and Case~II ($m\equiv 4\pmod 6$) that recurs in the color-by-color proofs below originates here.
When $m\equiv 0,2\pmod 6$ the primary defect support already governs the first-return dynamics.
When $m\equiv 4\pmod{6}$ the extra affine lines from Case~II are present---for color~$1$, the line $u+t=1$, and for color~$0$, the line $u=1+2t$---and they do more than add a few endpoint corrections.
They change the interior defect-height ordering on the transversal, which turns the primary color-$1$ drift $x\mapsto x+3$ into the repaired $+2/+6$ pattern and gives the uniform $+4$ interior drift for color~$0$; the later $m\bmod 12$ split for color~$0$ is then only a cyclic reordering of the same residue-$4$ splice picture.
\end{remark}

\begin{lemma}[defect-itinerary reduction]\label{lem:routeE-itinerary}
Let $F:\Zm^2\to\Zm^2$ be a map of the form
\[
F(z)=
\begin{cases}
G(z):=(u,t+1), & z\notin E,\\
D_\alpha(z):=(u+\Delta_\alpha,\ t+\tau_\alpha), & z\in E_\alpha,
\end{cases}
\]
where $E=\bigsqcup_\alpha E_\alpha$ is a finite disjoint union of defect components and each $D_\alpha$ has fixed increment $(\Delta_\alpha,\tau_\alpha)$.
Let
\[
L:=\{(u,0):u\in\Zm\}.
\]
Assume every point of $L$ has a first return to $L$.
Then for every start $(u,0)\in L$, the orbit segment before first return has a unique factorization
\[
G^{a_0}D_{\alpha_1}G^{a_1}D_{\alpha_2}\cdots D_{\alpha_r}G^{a_r},
\]
where each $a_j\ge 0$ and every point of each generic block lies outside $E$.
If
\[
F^{\rho(u)}(u,0)=(T(u),0),
\]
then
\[
T(u)=u+\sum_{j=1}^r \Delta_{\alpha_j}\pmod m,
\qquad
\rho(u)=\sum_{j=0}^r a_j+r.
\]
\end{lemma}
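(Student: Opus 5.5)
The plan is a direct bookkeeping argument on the orbit segment. Fix $(u,0)\in L$, set $z_0=(u,0)$, $z_s=F^s(z_0)$, and let $\rho=\rho(u)$ be the first return time. It exists by hypothesis, and $z_\rho=(T(u),0)$.

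\emph{Step 1: the itinerary.} Since $E=\bigsqcup_\alpha E_\alpha$ is a disjoint union, each point lies outside $E$ or in exactly one $E_\alpha$. So the step $z_s\mapsto z_{s+1}$ for $0\le s<\rho$ is either $G$ (when $z_s\notin E$) or a uniquely determined $D_\alpha$ (when $z_s\in E_\alpha$). This gives a well-defined word in the letters $G$ and $D_\alpha$ of total length $\rho$.

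\emph{Step 2: the factorization.} Let $s_1<\dots<s_r$ be the times $s<\rho$ with $z_s\in E$, and let $\alpha_j$ be the component with $z_{s_j}\in E_{\alpha_j}$. Define $a_0=s_1$, $a_j=s_{j+1}-s_j-1$ for $1\le j<r$, and $a_r=\rho-1-s_r$; if $r=0$ take $a_0=\rho$. The word then has the form $G^{a_0}D_{\alpha_1}G^{a_1}\cdots D_{\alpha_r}G^{a_r}$, where each generic block consists exactly of the times $s$ with $z_s\notin E$. Uniqueness is immediate: the positions and labels of the $D$-letters are forced by Step 1, and the exponents $a_j$ are then determined. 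Counting letters gives $\rho=\sum_j a_j+r$.

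\emph{Step 3: the displacement.} Every $G$ and every $D_\alpha$ acts by translation by a fixed vector, namely $(0,1)$ and $(\Delta_\alpha,\tau_\alpha)$ respectively. Hence $z_\rho-z_0$ equals the sum of the increments along the word. Its first coordinate is $\sum_{j=1}^r\Delta_{\alpha_j}$, because $G$ contributes nothing to $u$. Since $z_\rho=(T(u),0)$, this yields $T(u)=u+\sum_j\Delta_{\alpha_j}\pmod m$.

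The second coordinate must likewise satisfy $\sum_j a_j+\sum_j\tau_{\alpha_j}\equiv 0\pmod m$. This is consistent, but it is not needed for the statement.

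There is no real obstacle here. The only point needing care is the convention that the return time counts the final step into $L$, and that intermediate points with $t=0$ cannot occur before time $\rho$, by minimality. The factorization should be indexed by the departure times $0,\dots,\rho-1$, so that the landing point $z_\rho\in L$ is not itself tested for membership in $E$.
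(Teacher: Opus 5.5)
Your proof is correct and takes essentially the same route as the paper. Both arguments read the factorization off the deterministic orbit, then obtain $T(u)$ by summing the fixed increments and $\rho(u)$ by counting letters. The paper builds the factorization greedily, running $G$ until the orbit meets $E$ or $L$; you index it directly by the defect times $s_1<\dots<s_r$ among the departure times $0,\dots,\rho-1$, which is a slightly cleaner way to handle the termination than the paper's loose definition of $a_0$.
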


\begin{proof}
Because $F$ is deterministic, once an orbit point lies outside $E$ it is forced to follow the generic branch $G$ until it either returns to $L$ or first meets one of the defect components.
Starting from $(u,0)$, let $a_0$ be the maximal integer such that
\[
(u,0),\ G(u,0),\ \dots,\ G^{a_0-1}(u,0)
\]
all lie outside $E$.
If $G^{a_0}(u,0)\in L$, then there are no defect steps and the claim is immediate.
Otherwise $G^{a_0}(u,0)$ lies in a unique defect component $E_{\alpha_1}$, so the next step is forced to be $D_{\alpha_1}$.
Repeating the same argument recursively yields the factorization above, and uniqueness follows because there is never any branch choice.

The generic branch $G$ leaves the $u$-coordinate unchanged, while each defect step $D_{\alpha_j}$ changes $u$ by exactly $\Delta_{\alpha_j}$.
Therefore the net lane change from $(u,0)$ to $(T(u),0)$ is
\[
\sum_{j=1}^r \Delta_{\alpha_j},
\]
which proves the formula for $T(u)$.
The total number of steps is the number of generic steps plus the number of defect steps, giving
\[
\rho(u)=\sum_{j=0}^r a_j+r.
\]
\end{proof}

Combining Lemma~\ref{lem:routeE-finite-defect} with Lemma~\ref{lem:routeE-itinerary} reduces each Route~E cycle proof to a one-dimensional first-return problem on the transversal $L=\{(u,0):u\in\Zm\}$: one lists the defect components encountered by the bulk ray from $(u,0)$ before first return, reads off the lane increment and return time, and checks that the induced map $T$ is a single $m$-cycle with $\sum_u\rho(u)=m^2$.
Because the Route~E defect support is affine, the ordered defect itinerary is constant on long arithmetic families of start lanes; these are the arithmetic family-blocks later spliced in Proposition~\ref{prop:routeE-splice} (see \S\ref{app:routeE-cycle} for the formal block decompositions).
In Case~II the Case~II repair family changes this height ordering rather than merely inserting a few endpoint exceptions.

\begin{table}[t]
\centering
\footnotesize
\setlength{\tabcolsep}{4pt}
\begin{tabularx}{\textwidth}{@{}l l l >{\raggedright\arraybackslash}X@{}}
\toprule
color & bulk coordinates $(u,t)$ & bulk branch & defect support / transversal \\
\midrule
$1$ & $(i-k,\,k)$ & $(u,t)\mapsto(u,t+1)$ & defects on $u+t=0$, $u+2t=m-1$, and in Case~II also $u+t=1$; transversal $t=0$ \\
$2$ & $(2i+k,\,-i-k)$ & $(u,t)\mapsto(u,t+1)$ & defects on $t=1$, $t=m-1$, $u+2t=0$, $u+t=m-1$; transversal $t=0$ \\
$0$ & $(i+2k,\,k)$ & $(u,t)\mapsto(u,t+1)$ & defects on $t=0$, $u=t+1$, and in Case~II also $u=1+2t$; transversal $t=0$ \\
\bottomrule
\end{tabularx}
\caption{Route~E at a glance: adapted bulk coordinates, generic branch, and affine defect support for the three colors, derived from Lemmas~\ref{lem:routeE-bulkcoords} and~\ref{lem:routeE-finite-defect}. The main text works out color $2$ explicitly, while the appendix computes the remaining mod-$6$-dependent first-return data on the listed transversals.}
\label{tab:routeE-summary}
\end{table}

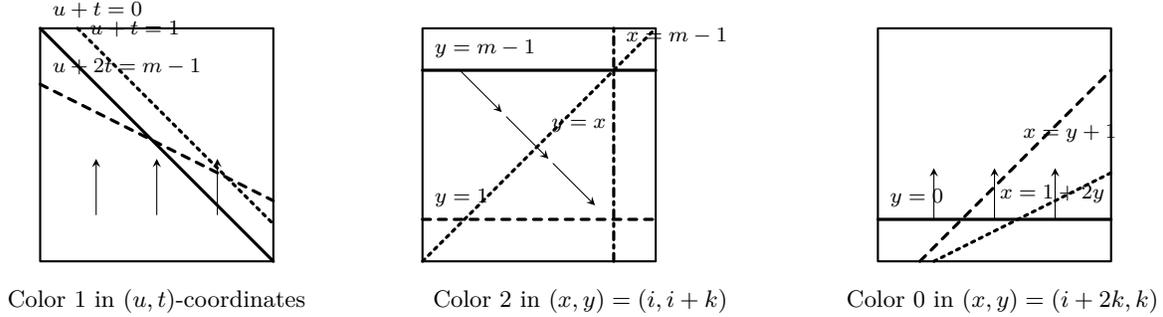
\begin{figure}[t]
\centering
\begin{minipage}[t]{0.32\textwidth}
\centering
\begin{tikzpicture}[x=0.62cm,y=0.62cm,>=stealth,line cap=round,line join=round]
\draw[thick] (0,0) rectangle (5,5);
\foreach \x in {1.2,2.5,3.8} {\draw[->] (\x,1.0) -- (\x,2.2);}
\draw[very thick] (0,5) -- (5,0);
\draw[very thick,dashed] (0,3.8) -- (5,1.3);
\draw[very thick,dotted] (0.8,5) -- (5,0.8);
\node[font=\scriptsize,anchor=south west] at (0.05,5) {$u+t=0$};
\node[font=\scriptsize,anchor=south west] at (0.05,3.8) {$u+2t=m-1$};
\node[font=\scriptsize,anchor=south west] at (0.85,4.6) {$u+t=1$};
\end{tikzpicture}

\smallskip
{\footnotesize Color $1$ in $(u,t)$-coordinates}
\end{minipage}\hfill
\begin{minipage}[t]{0.32\textwidth}
\centering
\begin{tikzpicture}[x=0.62cm,y=0.62cm,>=stealth,line cap=round,line join=round]
\draw[thick] (0,0) rectangle (5,5);
\foreach \x/\y in {0.8/4.1,1.8/3.1,2.8/2.1} {\draw[->] (\x,\y) -- ++(0.9,-0.9);}
\draw[very thick] (0,4.1) -- (5,4.1);
\draw[very thick,dashed] (0,0.9) -- (5,0.9);
\draw[very thick,dotted] (0,0) -- (5,5);
\draw[very thick,dash dot] (4.1,0) -- (4.1,5);
\node[font=\scriptsize,anchor=south west] at (0.05,4.15) {$y=m-1$};
\node[font=\scriptsize,anchor=south west] at (0.05,0.95) {$y=1$};
\node[font=\scriptsize,anchor=south west] at (2.55,2.55) {$y=x$};
\node[font=\scriptsize,anchor=south west] at (4.15,4.45) {$x=m-1$};
\end{tikzpicture}

\smallskip
{\footnotesize Color $2$ in $(x,y)=(i,i+k)$}
\end{minipage}\hfill
\begin{minipage}[t]{0.32\textwidth}
\centering
\begin{tikzpicture}[x=0.62cm,y=0.62cm,>=stealth,line cap=round,line join=round]
\draw[thick] (0,0) rectangle (5,5);
\foreach \x in {1.2,2.5,3.8} {\draw[->] (\x,0.9) -- (\x,2.0);}
\draw[very thick] (0,0.9) -- (5,0.9);
\draw[very thick,dashed] (0.9,0) -- (5,4.1);
\draw[very thick,dotted] (1.2,0) -- (5,1.9);
\node[font=\scriptsize,anchor=south west] at (0.05,0.95) {$y=0$};
\node[font=\scriptsize,anchor=south west] at (2.9,2.35) {$x=y+1$};
\node[font=\scriptsize,anchor=south west] at (2.4,1.05) {$x=1+2y$};
\end{tikzpicture}

\smallskip
{\footnotesize Color $0$ in $(x,y)=(i+2k,k)$}
\end{minipage}
\caption{Schematic defect geometry on $P_0$. The left and right panels use the bulk coordinates of Lemma~\ref{lem:routeE-bulkcoords}; the center panel (color~$2$) uses the working frame $(x,y)=(i,\,i+k)$. In each panel the arrows indicate the generic bulk branch, while the solid/dashed/dotted lines represent the affine defect families from Lemma~\ref{lem:routeE-finite-defect}. The isolated boundary-point corrections sit on these lines and act as the splice points described in Remark~\ref{rem:routeE-design}.}
\label{fig:routeE-defects}
\end{figure}

\begin{remark}[reading guide for the working coordinates]\label{rem:coord-guide}
Colors~$1$ and~$0$ are analyzed in bulk coordinates $(u,t)$, where $t$ is the return clock and $u$ is the lane label; the transversal is $t=0$.
Color~$2$ is cleaner in the separate frame $(x,y)=(i,\,i+k)$, where the bulk move is the primitive diagonal $(x,y)\mapsto(x+1,y-1)$ and the transversal is $y=0$.
Table~\ref{tab:routeE-summary} and Figure~\ref{fig:routeE-defects} are the standing reference for these coordinate choices.
\end{remark}

\begin{remark}[$R_2$ is universal]\label{rem:R2-universal}
The return-map formula for color~$2$ is the same for all even $m\ge 6$, independent of $m\bmod 6$; only colors~$0$ and~$1$ require a Case~I\,/\,II split.
For this reason, the main text works out color~$2$ completely before the appendix turns to the genuinely mod-$6$-dependent colors.
\end{remark}

\subsection{Primary geometry, obstruction, and the Route~E repair}\label{sec:primary-obstruction}

We now isolate the conceptual role of the Case~II family.
The natural baseline model keeps the layers $S\ge 1$ of Definition~\ref{def:routeE} unchanged and uses the Case~I layer-$0$ families for all even $m$.

\begin{definition}[primary Route~E geometry]\label{def:routeE-primary}
Let $m\ge 6$ be even.
The \emph{primary Route~E geometry} is the direction assignment obtained from Definition~\ref{def:routeE} by keeping the rules on layers $S\ge 1$ unchanged and, on layer $S=0$, using the Case~I families $X_{102},X_{021},X_{210}$ irrespective of $m\bmod 6$.
Thus the primary geometry removes the Case~II repair family but preserves the same layer-$1$ and layer-$2$ transducer.
\end{definition}

\begin{proposition}[primary geometry and the first obstruction]\label{prop:primary-geometry-obstruction}
Let $m=6q+4\ge 10$, and let $T_c^{\mathrm{pri}}$ denote the induced lane maps for the primary Route~E geometry on the same transversals as in the actual Route~E proof.
Then:
\begin{enumerate}[label=\textup{(\roman*)},leftmargin=2.5em]
\item color~$2$ is unchanged from the actual Route~E construction and remains Hamiltonian;
\item color~$0$ has the same first-return formulas and the same splice order as in Case~I, hence remains Hamiltonian;
\item color~$1$ also has the Case~I first-return formula
\[
T_1^{\mathrm{pri}}(x)=
\begin{cases}
2,&x=0,\\
x+3,&1\le x\le m-4,\\
1,&x=m-3,\\
0,&x=m-2,\\
3,&x=m-1,
\end{cases}
\]
and therefore splits into the three closed cycles
\[
(0,2,5,8,\dots,m-2),
\qquad
(1,4,7,10,\dots,m-3),
\qquad
(3,6,9,12,\dots,m-1).
\]
\end{enumerate}
In particular, the first obstruction in the primary geometry occurs in color~$1$ when $m\equiv 4\pmod 6$.
\end{proposition}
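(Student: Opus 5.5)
The plan is to compare the primary geometry with the actual Route~E construction layer by layer and color by color. By Definition~\ref{def:routeE-primary}, the two assignments agree on all layers $S\ge 1$. On layer $0$ they differ only on the symmetric difference between the Case~I families $X_{\bullet}$ and the Case~II families $Y_{\bullet}$, together with the Case~II isolated vertices. This symmetric difference consists of the points with $i=1$ on the lines $j=1$ and $k=0$, the added $i=1$ track of $Y_{210}$, the two boundary points $(2,0,m-2)$ and $(2,m-1,m-1)$, and the extra exceptional vertices $(1,1,m-2)$ and $(1,m-1,0)$. Everything else is inherited by Proposition~\ref{prop:routeE-transducer}. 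The primary return maps $R_c^{\mathrm{pri}}$ are therefore read off from the same finite table evaluation as in Appendix~\ref{app:routeE-derivation}, with the Case~I rows used for every even $m$.

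For part \textup{(i)}, I will check that on every differing vertex the color-$2$ letter of the triple is the same in both geometries. For example, $(1,0,2)$ and $(1,2,0)$ both give $d_2\in\{2,0\}$, so this must be done pointwise against the table. Where the letter does differ, I will check that the resulting color-$2$ return displacement still matches the universal $R_2$ formula. Remark~\ref{rem:R2-universal} states that $R_2$ is independent of $m\bmod 6$, so $R_2^{\mathrm{pri}}=R_2$, and the color-$2$ Hamiltonicity proved in \S\ref{sec:even-r2} applies verbatim. For part \textup{(ii)}, the Case~I formulas for $R_0$ hold for any even $m$, since their derivation never used $m\bmod 6$. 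I will recompute the first-return lane map $T_0^{\mathrm{pri}}$ on $t=0$ via Lemma~\ref{lem:routeE-itinerary}. The Case~I splice argument only needs $m$ even and $m\ge 6$, so $T_0^{\mathrm{pri}}$ is a single cycle and $\sum\rho=m^2$. Lemma~\ref{lem:counting} then gives Hamiltonicity.

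The main work is part \textup{(iii)}. In the coordinates $\Phi_1(i,k)=(u,t)=(i-k,k)$, the primary color-$1$ defect support reduces to the lines $u+t=0$ and $u+2t=m-1$, plus finitely many isolated points. Neither line carries the Case~II line $u+t=1$. Starting from lane $(x,0)$, the bulk ray $t\mapsto(x,t)$ meets $u+t=0$ at $t=-x$ and $u+2t=m-1$ at $2t=m-1-x$. The latter has solutions only for odd $x$, namely $t=(m-1-x)/2$ and $t=(m-1-x)/2+m/2$. I will order these heights along the ray, apply the $+1$ and $+2$ stall increments from Lemma~\ref{lem:routeE-finite-defect}(i), and follow the post-stall ray, which has shifted lane, until it reaches $t=0$. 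For interior $x$ I expect the total lane increment to be $3$, giving $T_1^{\mathrm{pri}}(x)=x+3$ for $1\le x\le m-4$. The boundary lanes $x\in\{0,m-3,m-2,m-1\}$ are where the isolated points intervene, and they are handled by a direct itinerary trace giving the displayed values $2,1,0,3$.

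Given the formula, the cycle decomposition is elementary. Because $3\mid m-1$, each residue class modulo $3$ is closed under $x\mapsto x+3$ together with the boundary wraps. In the class of $0$, the map sends $0\mapsto 2\mapsto 5\mapsto\cdots\mapsto m-2\mapsto 0$. In the class of $1$, it sends $1\mapsto 4\mapsto\cdots\mapsto m-3\mapsto 1$. In the class of $3$, it sends $3\mapsto 6\mapsto\cdots\mapsto m-1\mapsto 3$. These three disjoint orbits exhaust $\mathbb Z_m$, which yields the three strands.

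I expect the main obstacle to be the interior itinerary count for color~$1$. The two defect lines are met in an order that depends on the parity of $x$, and the lane shift from the first stall moves the ray onto a new lane before the second defect. Verifying that the net increment is uniformly $3$ for both parities, rather than alternating between $2$ and $4$, requires careful bookkeeping of which stall type (the $+1$ or $+2$ move) occurs on each line, and this is where I would spend most of the effort.
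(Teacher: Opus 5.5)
Your proposal is correct and takes essentially the paper's approach: for $m\equiv 4\pmod 6$ the primary geometry is literally the Case~I layer-$0$ rule, whose return-map and first-return derivations never use $m\bmod 6$. Hence color~$2$ is unchanged, color~$0$ inherits the Case~I splice $(1\,2\,3\,4)$, and color~$1$ inherits $x\mapsto x+3$, which closes into three strands because $3\mid m-1$; your explicit itinerary trace for color~$1$ simply re-runs the proof of Proposition~\ref{prop:R1-caseI}.

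There are two cosmetic slips. Your sample pair $(1,0,2)/(1,2,0)$ is not a differing pair; the actual ones are $(1,0,2)/(0,1,2)$, $(0,2,1)/(2,0,1)$ and $(1,2,0)/(2,1,0)$, and all of them share $d_2$, so your ``where the letter differs'' branch is vacuous. Also, the strand through $0$ is $\{0\}\cup\{x\equiv 2\pmod 3\}$ rather than a residue class.
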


\begin{proof}
For color~$2$, the Case~II repair family changes only the color-$0$ and color-$1$ letters of the layer-$0$ triples, never the color-$2$ letter.
So the color-$2$ return map is identical to the one in Proposition~\ref{prop:R2-main-data}, and the proof of Corollary~\ref{cor:R2-main} applies verbatim.

For color~$0$, the derivation of Proposition~\ref{prop:R0-caseI-data} uses only the primary defect support $t=0$ and $u=t+1$ together with the associated boundary corrections.
The congruence assumption $m\equiv 0,2\pmod 6$ enters there only because the actual Route~E construction adds the extra Case~II repair line $u=1+2t$ when $m\equiv 4\pmod 6$.
Since the primary geometry omits that extra line for all even $m$, the same first-return formulas and return-time sum remain valid, and the proof of Corollary~\ref{cor:R0-caseI} shows that color~$0$ is Hamiltonian.

For color~$1$, the same reasoning with Proposition~\ref{prop:R1-caseI} shows that the primary geometry has the displayed Case~I formula.
Here the generic rule is $x\mapsto x+3$, while the terminal values satisfy $T_1^{\mathrm{pri}}(m-3)=1$, $T_1^{\mathrm{pri}}(m-2)=0$, and $T_1^{\mathrm{pri}}(m-1)=3$.
Hence each residue-$3$ block closes on itself, giving exactly the three displayed cycles.
So the first genuine obstruction in the primary geometry is the residue-$3$ closure of color~$1$ when $m\equiv 4\pmod 6$.
\end{proof}

The actual Case~II rule adds precisely the Case~II repair family needed to alter that interior closure.
To see that this is not merely a boundary adjustment, keep the full Case~II scaffold but delete only the Case~II repair family itself.
Let
\[
Y_{210}^{\mathrm{add}}
:=
\{(1,j,m-1-j):2\le j\le m-2\}\cup\{(2,0,m-2),(2,m-1,m-1)\}
\subseteq Y_{210},
\]
and form the auxiliary direction assignment obtained from Definition~\ref{def:routeE} in Case~II by restoring the default triple $(1,2,0)$ on $Y_{210}^{\mathrm{add}}$ and leaving all other Case~II points unchanged.
Write $\widetilde T_c$ for its induced lane maps.

\begin{proposition}[deleting the Case~II repair family is already noninjective]\label{prop:boundary-no-go}
Assume $m=6q+4\ge 10$.
For the auxiliary direction assignment obtained by deleting only $Y_{210}^{\mathrm{add}}$ from the actual Case~II rule, one has
\[
\widetilde T_1(1)=\widetilde T_1(m-1)=3,
\qquad
\widetilde T_0(1)=\widetilde T_0(2)=4.
\]
In particular, both $\widetilde T_1$ and $\widetilde T_0$ are noninjective.
So within the actual Case~II scaffold, the Case~II repair family performs genuine bulk repair that the remaining boundary corrections do not supply.
\end{proposition}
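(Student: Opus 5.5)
The plan is to exhibit the four orbit segments explicitly. We work in the bulk coordinates of Lemma~\ref{lem:routeE-bulkcoords} on the transversal $L=\{t=0\}$, using the defect-itinerary reduction of Lemma~\ref{lem:routeE-itinerary}. For colors~$1$ and~$0$ the coordinates are $\Phi_1(i,k)=(i-k,k)$ and $\Phi_0(i,k)=(i+2k,k)$. In both cases the lane $u=x$ on $L$ is the start point $v(x,0)=(x,-x,0)\in P_0$. So it suffices to follow the color-$1$ return map $\widetilde R_1$ from $(i,k)=(1,0)$ and $(m-1,0)$, and the color-$0$ return map $\widetilde R_0$ from $(1,0)$ and $(2,0)$, until each orbit first meets $k=0$ again. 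Noninjectivity is then immediate from $\widetilde T_1(1)=\widetilde T_1(m-1)$ and $\widetilde T_0(1)=\widetilde T_0(2)$ with $1\ne m-1$ and $1\ne 2$, since $m\ge 10$.

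First I would record how deleting $Y_{210}^{\mathrm{add}}$ changes the return maps. By Proposition~\ref{prop:routeE-transducer}, the layer-$1$ and layer-$2$ rules are untouched. Hence only the layer-$0$ letter at start points in $Y_{210}^{\mathrm{add}}$ changes: there $(2,1,0)$ is replaced by the default $(1,2,0)$. In $(i,k)$ coordinates this set is the line $i=1$ with $1\le k\le m-3$, together with the points $(2,m-2)$ and $(2,m-1)$.

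For each such point I would recompute the three-step word $\omega_c$ by evaluating the tests $i_1=0$ and $j_2=0$. This yields a modified piecewise formula for $\widetilde R_0,\widetilde R_1$. It agrees with the appendix formula for the actual Case~II rule except on this line and the two points, where the word becomes the primary-geometry word. For color~$1$ this should reproduce the generic step $(i+1,k+1)$ on most of the line $i=1$; the Case~II defect line $u+t=1$ of Lemma~\ref{lem:routeE-finite-defect}(i) disappears, apart from whatever the two boundary points and the remaining Case~II isolated points $(1,1,m-2)$, $(1,m-1,0)$ contribute. The analogous statement for color~$0$ removes the line $u=1+2t$.

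Next I would trace the four orbits. For color~$1$ from lane $1$: in bulk coordinates the ray is $(1,t)$, $t=0,1,\dots$. I would check, in increasing $t$, which of the surviving defect lines $u+t=0$ and $u+2t=m-1$, and which remaining isolated Case~II points, it hits. The expected outcome is a single net lane increment of $+2$ before it reaches $t=0$, giving $\widetilde T_1(1)=3$. For lane $m-1$, the ray meets $u+t=0$ already at $t=1$; I expect the orbit to agree with the primary computation, which gives $T_1^{\mathrm{pri}}(m-1)=3$ in Proposition~\ref{prop:primary-geometry-obstruction}. I would verify that none of the surviving Case~II boundary points lies on this particular ray segment. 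For color~$0$ I would do the same with the rays $(1,t)$ and $(2,t)$, $t=0,1,\dots$, in $\Phi_0$ coordinates, against the lines $t=0$ and $u=t+1$ and the Case~II isolated points. I expect net increments $+3$ and $+2$ respectively, both landing at lane $4$.

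The main obstacle is the bookkeeping in this last step. The auxiliary rule is neither the primary geometry nor the actual Case~II rule: the shortened tracks $Y_{102},Y_{021}$ and the extra isolated points $(1,1,m-2)$, $(1,m-1,0)$ remain, and these sit precisely near lanes $1$ and $2$. So I cannot simply quote either appendix formula. I would first translate every remaining exceptional vertex into $(u,t)$ coordinates for each color and list which of the four rays pass through it, and at which height $t$. Only then would I apply the defect increments of Lemma~\ref{lem:routeE-finite-defect}, checking after each stall whether the orbit has re-entered a defect or reached $t=0$.
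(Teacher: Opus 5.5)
Your plan is correct and is essentially the paper's argument. The paper likewise just traces these four orbits in the bulk coordinates $\Phi_1,\Phi_0$, and the traces confirm your predicted increments: color~$1$, lane~$1$ returns in one step via the retained point $(1,m-1,0)$; lane $m-1$ picks up $+1,+2,+1$ at $(0,m-1,1)$, $(m-1,2,m-1)$, $(1,0,m-1)$ and passes generically through the deleted points $(1,m-2,1)$ and $(2,m-1,m-1)$; color~$0$ gives $(1,0)\mapsto(3,2)\mapsto(4,4)$ and $(2,0)\mapsto(3,1)\to(3,2)\mapsto(4,4)$ in $(u,t)$. Your shortcut of importing $T_1^{\mathrm{pri}}(m-1)=3$ is also sound, because after deleting $Y_{210}^{\mathrm{add}}$ the auxiliary rule differs from the primary geometry only at $(i,k)=(1,m-2)$ and $(1,0)$, neither of which that orbit visits.
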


\begin{proof}
For color~$1$, work in the bulk coordinates $\Phi_1(i,k)=(u,t)=(i-k,k)$ from Lemma~\ref{lem:routeE-bulkcoords}, so the generic branch is $(u,t)\mapsto(u,t+1)$.
The deleted variant removes the interior defect line $u+t=1$ but keeps the other Case~II boundary points.
Starting from lane $1$, the retained isolated point $(u,t)=(1,0)$ still carries the $+2$ stall, so
\[
(1,0)\longmapsto(3,0),
\]
and therefore $\widetilde T_1(1)=3$.
Starting from lane $m-1$, the first step is generic,
\[
(m-1,0)\xrightarrow{\bulk}(m-1,1),
\]
then the primary defect line $u+t=0$ gives the usual $+1$ stall,
\[
(m-1,1)\longmapsto(0,1),
\]
after which the orbit climbs generically to the retained boundary point $(0,m-1)$:
\[
(0,1)\xrightarrow{\bulk^{m-2}}(0,m-1).
\]
There the retained Case~II boundary correction gives a $+2$ stall,
\[
(0,m-1)\longmapsto(2,m-1),
\]
followed by the retained point corresponding to $(i,k)=(1,m-1)$, which gives a final $+1$ stall,
\[
(2,m-1)\longmapsto(3,m-1)\xrightarrow{\bulk}(3,0).
\]
Hence $\widetilde T_1(m-1)=3$.
So $\widetilde T_1$ is not injective.

For color~$0$, work in the bulk coordinates $\Phi_0(i,k)=(u,t)=(i+2k,k)$.
Here the deleted variant removes the interior Case~II line $u=1+2t$ but keeps the remaining Case~II boundary points.
Starting from lane $1$, the retained isolated point $(1,0)$ still carries the jump
\[
(1,0)\longmapsto(3,2),
\]
and the primary defect line $u=t+1$ then gives
\[
(3,2)\longmapsto(4,4).
\]
No further defect is encountered before return, so the orbit climbs generically to $(4,0)$ and $\widetilde T_0(1)=4$.
Starting from lane $2$, the ordinary base defect on $t=0$ gives
\[
(2,0)\longmapsto(3,1)\xrightarrow{\bulk}(3,2)\longmapsto(4,4),
\]
and again the orbit then returns generically to $(4,0)$.
Thus $\widetilde T_0(2)=4$ as well.
Therefore $\widetilde T_0$ is not injective.
\end{proof}

\begin{example}[the Case~II repair at $m=10$]\label{ex:m10-color1}
In the primary geometry of Proposition~\ref{prop:primary-geometry-obstruction}, the color-$1$ lane map is
\[
T_1^{\mathrm{pri}}=(0,2,5,8)(1,4,7)(3,6,9),
\]
so the primary model splits into three residue-$3$ cycles.
By Proposition~\ref{prop:R1-caseII}, the actual Route~E map at $m=10$ is the single cycle
\[
0\mapsto 2\mapsto 5\mapsto 1\mapsto 3\mapsto 9\mapsto 7\mapsto 4\mapsto 6\mapsto 8\mapsto 0.
\]
Thus the Case~II repair family does more than reconnect endpoints: it changes the interior drift on even lanes from the primary $+3$ rule to $+2$, and on odd lanes from $+3$ to $+6$, thereby interleaving the three residue-$3$ strands into one cycle.

A representative repaired orbit already appears on lane $3$.
In the color-$1$ bulk coordinates $(u,t)$, one has
\begin{multline*}
(3,0)\xrightarrow{\bulk^3}(3,3)\xrightarrow{+2}(5,3)\xrightarrow{\bulk^2}(5,5)\xrightarrow{+1}(6,5)\\
\xrightarrow{+1}(7,5)\xrightarrow{\bulk}(7,6)\xrightarrow{+2}(9,6)\xrightarrow{\bulk^4}(9,0),
\end{multline*}
so $T_1(3)=9$ and $\rho_1(3)=14=m+4$.
The two $+2$ stalls come from the diagonal defect line, while the two consecutive $+1$ stalls come from the two Case~II vertical lines $u+t=0$ and $u+t=1$.
\end{example}

This is the viewpoint to keep in mind when reading the Case~II appendix formulas: on long arithmetic families the ordered defect heights are constant, so the added lines $u+t=1$ and $u=1+2t$ change the interior shifts before Proposition~\ref{prop:routeE-splice} reduces the closure step to a finite splice permutation.

\begin{example}[Color $2$ for $m=6$]\label{ex:m6-color2}
We illustrate the return-map machinery on the smallest admissible even value $m=6$.
In the working frame $(x,y)=(i,\,i+k)$ on $P_0$, the generic (bulk) step is $G\colon(x,y)\mapsto(x+1,y-1)$, and the transversal is $L_2=\{(x,0):x\in\mathbb Z_6\}$.

Starting from lane $x=0$:
\[
(0,0)\xrightarrow{E}(1,0).
\]
This single defect step already returns to $L_2$, giving $T_2(0)=1$ and $\rho_2(0)=1$.

Starting from lane $x=1$, the orbit crosses two defect lines before returning:
\[
(1,0)\xrightarrow{G}(2,5)\xrightarrow{C}(2,4)\xrightarrow{G}(3,3)
\xrightarrow{D}(3,1)\xrightarrow{B}(5,0).
\]
The defect sequence is $C$ (line $y=m{-}1$), then $D$ (diagonal $y=x$), then $B$ (line $y=1$).
Return data: $T_2(1)=5$, $\rho_2(1)=5$.

For the remaining lanes, the same method gives $T_2(2)=0$ with $\rho_2(2)=12=2m$, and $T_2(x)=x-1$ with $\rho_2(x)=6=m$ for $x\in\{3,4,5\}$.
The first-return map $T_2$ is therefore the $6$-cycle
\[
0\to 1\to 5\to 4\to 3\to 2\to 0,
\]
and the return-time sum is $1+5+12+6+6+6=36=6^2=m^2$.
Lemma~\ref{lem:counting} now implies that $R_2$ is a single $36$-cycle on $P_0$.
\end{example}

\subsection{The color-\texorpdfstring{$2$}{2} return map}\label{sec:even-r2}
We now carry out the color-$2$ computation in full.
This is the cleanest representative case because its return map does not depend on $m\bmod 6$.
For color~$2$ it is convenient to work directly in the coordinates $(x,y)=(i,\,i+k)$ rather than the bulk frame of Lemma~\ref{lem:routeE-bulkcoords}; in this frame the generic branch becomes $G\colon(x,y)\mapsto(x+1,y-1)$ and the defect lines are those shown in the center panel of Figure~\ref{fig:routeE-defects}.

\begin{lemma}\label{lem:R2-main-xy}
In the coordinates $(x,y)=(i,i+k)$ on $P_0$, the associated Route~E $m$-step map is
\[
R_2(x,y)=
\begin{cases}
(x+1,y-2),&(x,y)=(2,2),\\
(x+2,y-1),&y=1,\ (x,y)\notin\{(1,1),(m-1,1)\},\\
(x,y-1),&\bigl(y=m-1,\ x\neq m-1\bigr)\ \text{or}\ (x,y)=(m-1,m-2),\\
(x,y-2),&\bigl(y=x,\ x\notin\{0,2,m-1\}\bigr)\ \text{or}\ (x,y)=(m-1,0),\\
(x+1,y),&\bigl(x=m-1,\ y\notin\{0,m-2\}\bigr)\ \text{or}\ (x,y)=(0,0),\\
(x+1,y-1),&\text{otherwise}.
\end{cases}
\]
All arithmetic is modulo $m$.
\end{lemma}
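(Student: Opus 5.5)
The plan is to read $R_2$ off the three-step transducer of Proposition~\ref{prop:routeE-transducer}. By that proposition, $R_2(i,k)=\bigl(i+N_0(\omega_2),\,k+N_2(\omega_2)-3\bigr)$. Passing to $(x,y)=(i,i+k)$ with $j=-y$, this becomes
\[
R_2(x,y)=\bigl(x+N_0(\omega_2),\ y+N_0(\omega_2)+N_2(\omega_2)-3\bigr).
\]
So everything reduces to determining the three-letter word $\omega_2=(a,b,c)$ at each start point. The generic word $012$ gives the bulk branch $(x+1,y-1)$, in agreement with Lemma~\ref{lem:routeE-generic}. Each other branch in the statement corresponds to one non-generic word:
\begin{itemize}[leftmargin=2em]
\item $(x+1,y-2)$ comes from $N_0=1,N_2=0$, i.e.\ a word such as $011$ or $110$;
\item $(x+2,y-1)$ comes from $N_0=2,N_2=0$;
\item $(x,y-1)$ comes from $N_0=0,N_2=2$;
\item $(x,y-2)$ comes from $N_0=0,N_2=1$;
\item $(x+1,y)$ comes from $N_0=1,N_2=2$.
\end{itemize}

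\emph{Step 1: the first letter $a$.} From Definition~\ref{def:routeE}, the color-$2$ letter on layer $0$ is $0$ on the default set and on $X_{210}$ (resp.\ $Y_{210}$). It is $2$ on $X_{102}$ and at $(m-2,1,1)$, and it is $1$ on $X_{021}$ and at $(m-2,2,0)$. I will first check that this letter is the same in Cases~I and~II; this is the content of Remark~\ref{rem:R2-universal}.
\begin{itemize}[leftmargin=2em]
\item The enlargement of $Y_{210}$ carries letter $0$, which equals the default letter.
\item The points of $X_{102}$ and $X_{021}$ with $i=1$ are missing from $Y_{102}$ and $Y_{021}$. They are exactly $(1,1,m-2)$ and $(1,m-1,0)$, and these receive the triples $(0,1,2)$ and $(2,0,1)$, whose color-$2$ letters are again $2$ and $1$.
\end{itemize}
Hence it suffices to work in Case~I. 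There, in $(x,y)$ coordinates, letter $2$ lives on the line $y=m-1$ ($j=1$) with endpoint corrections, together with $(m-2,m-1)$. Letter $1$ lives on the line $y=x$ ($k=0$) with corrections, together with $(m-2,m-2)$.

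\emph{Step 2: the tests on layers $1$ and $2$.} By Proposition~\ref{prop:routeE-transducer}(ii), the layer-$1$ letter is $b=2$ if $i_1=0$ and $b=1$ otherwise, where $i_1=x+\mathbf 1_{a=0}$. The layer-$2$ letter is $c=0$ if $j_2=0$ and $c=2$ otherwise, where $j_2=-y+\mathbf 1_{a=1}+\mathbf 1_{b=1}$.

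Running the two tests against the three values of $a$ produces affine conditions:
\begin{itemize}[leftmargin=2em]
\item For $a=0$: $i_1=0\iff x=m-1$, and when $b=1$ one has $j_2=0\iff y=1$.
\item For $a=1$ and $a=2$: $i_1=0\iff x=0$, and the test $j_2=0$ becomes $y=2$ or $y=1$ depending on $b$.
\end{itemize}
Combining these with the locus of $a$ from Step~1 produces exactly the lines $y=1$, $y=m-1$, $y=x$, $x=m-1$ listed in the statement. For each resulting word I will compute $(N_0,N_2)$ and match it to the displayed displacement.

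\emph{Step 3: boundary bookkeeping (the main obstacle).} The lines are easy; the real work is at their intersections and at the endpoint corrections of $X_{102},X_{021},X_{210}$. For instance, $(0,0,0)\mapsto(x,y)=(0,0)$ carries letter $2$, and $(m-1,2,m-1)$ lies on $y=m-2$, and both must land in the stated exceptional branches. The same care is needed for the isolated points $(2,2)$, $(m-1,1)$, $(1,1)$, $(m-1,0)$ and $(m-1,m-2)$.

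I will handle this with a finite table. Its rows are the special points, namely the endpoints of the three families, the two isolated vertices, and the pairwise intersections of the four lines. For each row I will evaluate $a$, then $i_1$, $b$, $j_2$, $c$, and check the result against the case list of the lemma. Because the lemma's cases are ordered, the table must also confirm that the first matching clause is the correct one at each special point.

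As a sanity check I would compare the table against Example~\ref{ex:m6-color2}. At $m=6$ it should give, for example, $(0,0)\mapsto(1,0)$ and $(3,3)\mapsto(3,1)$.
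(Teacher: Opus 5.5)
Your proposal is correct, but it reaches the lemma by a different route from the paper.

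\textbf{Comparison with the paper's route.} The paper's proof is a pure coordinate change. It takes the $(i,k)$-formula for $R_2$ already established in Appendix~\ref{app:routeE}, which was derived in Proposition~\ref{prop:routeE-return-formulas} through Tables~\ref{tab:c2_case1} and~\ref{tab:c2_case2}. It then substitutes $i=x$, $k=y-x$ and matches the six branches.

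You instead rerun the low-layer transducer directly in the $(x,y)$ frame. This duplicates the appendix tables, but it makes the lemma self-contained. Your Step~1 also gives an actual proof of Remark~\ref{rem:R2-universal}, which the paper only handles by computing a second table for Case~II. The two cases differ at layer~$0$ only on $Y_{210}^{\mathrm{add}}\cup\{(1,1,m-2),(1,m-1,0)\}$, and the color-$2$ letter is the same on both sides there. For the enlargement you tacitly use that those points are default in Case~I; this is a one-line range check.

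Carried out, your plan does yield the statement. The $a=2$ locus is $\{(x,m-1):1\le x\le m-2\}\cup\{(0,0),(m-1,m-2)\}$, and the $a=1$ locus is $\{(x,x):1\le x\le m-2\}\cup\{(0,m-1),(m-1,0)\}$. Running the two tests on these gives exactly the six displayed branches, and the listed exclusions make them pairwise disjoint.

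\textbf{A slip in Step~2.} Fix this before building the table. For $a=2$ one has $j_2=-y+\mathbf 1_{b=1}$. So the layer-$2$ test is $y=1$ when $b=1$ and $y=0$ when $b=2$. It is not ``$y=2$ or $y=1$'' as it is for $a=1$.

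This matters exactly at $(0,0)$, the only point with $a=b=2$. The correct test gives $c=0$, the word $220$, and the branch $(x+1,y)$. Your stated rule would instead give $222$, a fixed point.

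Likewise, for $a=0$ and $x=m-1$ you get $b=2$, and the test becomes $y=0$. It never fires, because $(m-1,0)$ carries letter~$1$. This is precisely what produces the branch $(x+1,y)$ on $x=m-1$, $y\notin\{0,m-2\}$.

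You wrote the correct formula $j_2=-y+\mathbf 1_{a=1}+\mathbf 1_{b=1}$ and plan to evaluate it point by point. So this is a bookkeeping error, not a gap.
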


\begin{proof}
Substitute $i=x$ and $k=y-x$ into the closed-form formula for $R_2$ from Appendix~\ref{app:routeE} and simplify each branch.
The six displayed moves are exactly the transformed images of the six original branches.
\end{proof}

For the orbit traces below write
\[
G(x,y)=(x+1,y-1)
\]
for the generic branch of Lemma~\ref{lem:R2-main-xy}.
The other five displayed branches will be referred to as $A,B,C,D,E$ in the order listed in the lemma:
$A$ applies at the isolated point $(2,2)$;
$B$ is the defect on the line $y=1$;
$C$ is the defect on $y=m-1$;
$D$ is the defect on $y=x$;
$E$ is the defect on $x=m-1$.

The transversal is
\[
L_2=\{(x,0):x\in \Zm\}.
\]

\begin{proposition}\label{prop:R2-main-data}
For every even $m\ge 6$, the first-return map on $L_2$ is
\[
T_2(x)=
\begin{cases}
1,&x=0,\\
m-1,&x=1,\\
0,&x=2,\\
x-1,&3\le x\le m-1,
\end{cases}
\]
with return times
\[
\rho_2(x)=
\begin{cases}
1,&x=0,\\
m-1,&x=1,\\
2m,&x=2,\\
m,&3\le x\le m-1.
\end{cases}
\]
\end{proposition}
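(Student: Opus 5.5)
The plan is to apply Lemma~\ref{lem:routeE-itinerary} directly to the piecewise formula of Lemma~\ref{lem:R2-main-xy}, working in the frame $(x,y)=(i,i+k)$ and tracing, for each start $(x,0)\in L_2$, the orbit until its first return to $y=0$. Under the generic branch $G(x,y)=(x+1,y-1)$ the quantity $x+y$ is invariant, so the natural lane label is $x+y$, which equals $x$ on $L_2$. Each branch changes $x+y$ by a fixed amount: $A$ by $-1$, $B$ by $+1$, $C$ by $-1$, $D$ by $-2$, $E$ by $+1$. Meanwhile $y$ decreases by $1$ per step, except that it decreases by $2$ under $A$ and $D$ and by $0$ under $E$. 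So the return point $T_2(x)$ is $x$ plus the signed sum of these lane increments along the itinerary, and $\rho_2(x)$ is the number of steps. The task therefore reduces to listing, for each start lane, the ordered sequence of defect lines $y=m-1$, $y=x$, $y=1$, $x=m-1$ (and the isolated points) that the descending diagonal ray meets before $y$ reaches $0$.

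I would first handle the three exceptional lanes $x=0,1,2$ and the generic interval $3\le x\le m-1$ separately. Lane $0$ is immediate, since $(0,0)$ is an $E$-point mapping to $(1,0)$. For lane $1$, the orbit is $G$ to $(2,m-1)$, then $C$, then generic steps along lane $1$ down to the diagonal, then $D$, then $B$ at $y=1$ landing on $(m-1,0)$. The count of generic steps between these events gives $\rho_2(1)=m-1$; the $m=6$ trace in Example~\ref{ex:m6-color2} is the template. For a generic start $x\ge 3$, the first $G$ step lands on $y=m-1$, so $C$ fires and the lane drops to $x-1$. The ray then descends along $x+y\equiv x-1$. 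It meets the diagonal $y=x$ only when $2y\equiv x-1$, which is possible only for odd $x-1$ (using that $m$ is even), and it meets the column $x=m-1$ at a height determined by the lane. I expect the lane contributions of these later encounters to cancel in pairs, for example a $D$ followed by two $E$'s, or by a $B$ and an $E$. This would give net lane change $-1$ and total time exactly $m$. Verifying this cancellation uniformly is a parity split on $x$, together with checking the boundary lanes $x=m-1$ and $x=3$, where the special points $(m-1,0)$, $(m-1,m-2)$ and $(2,2)$ intervene.

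Lane $2$ is where I expect the main obstacle. Its orbit wraps around the torus twice, which is why $\rho_2(2)=2m$, and it must pass through the isolated point $A=(2,2)$ and the exclusions $(1,1)$, $(m-1,1)$ in the $B$- and $D$-branches. I would trace it explicitly: $G$ to $(3,m-1)$, then $C$ to lane $1$. The ray then descends past the diagonal, where the exclusions matter, crosses $y=0$ without landing on $L_2$ because it is off-lane at that moment, continues through $x=m-1$, and finally reaches $(2,2)$, where $A$ sends it toward $(0,0)$'s predecessor region and it returns at $(0,0)$. The bookkeeping of heights at which each event occurs, done symbolically in $m$, should produce exactly $2m$ steps. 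As a final consistency check, the displayed return times sum to $1+(m-1)+2m+(m-3)m=m^2$, which confirms that no orbit segment is missed. This is exactly the input needed for Lemma~\ref{lem:counting}, so any discrepancy in the lane-$2$ trace would show up there.
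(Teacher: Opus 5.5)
Your strategy of following each start $(x,0)$ through the branches of Lemma~\ref{lem:R2-main-xy}, with $x+y$ as the lane label, is the paper's strategy, and your lane increments are right ($-1$ for $A$ and $C$, $+1$ for $B$ and $E$, $-2$ for $D$). But the content of this proposition \emph{is} the individual traces, and you defer nearly all of them. The one nontrivial itinerary you do describe, lane $2$, is wrong.

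Since $L_2$ is the whole line $\{y=0\}$, an orbit cannot pass height $0$ ``because it is off-lane''. The only way to skip $L_2$ is a drop of $y$ by $2$ starting from height $1$. The only such move is $D$ at $(1,1)$; note that $(1,1)$ is excluded from $B$. The actual lane-$2$ orbit is
\[
\begin{aligned}
(2,0)&\xrightarrow{G}(3,m-1)\xrightarrow{C}(3,m-2)\xrightarrow{G^{m-4}}(m-1,2)\xrightarrow{E}(0,2)\xrightarrow{G}(1,1)\\
&\xrightarrow{D}(1,m-1)\xrightarrow{C}(1,m-2)\xrightarrow{G^{m-3}}(m-2,1)\xrightarrow{B}(0,0).
\end{aligned}
\]
It makes two full descents in $y$ and never visits $(2,2)$. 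Your ending cannot be fixed by recounting. Since $A(2,2)=(3,0)$, an orbit that finishes with $A$ returns at lane $3$, not $0$, and would collide with $T_2(4)=3$. The point $(2,2)$ is used by lane $4$, after its $C$- and $E$-moves. Lane $3$ is covered by the generic odd formula.

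Several other concrete claims also need correcting.
\begin{itemize}
\item \textbf{Parity of the diagonal hit.} On the lane $x-1$ reached after the first $C$-move, $2y\equiv x-1\pmod m$ with $m$ even forces $x-1$ to be \emph{even}, i.e.\ $x$ odd. You have the parity reversed.
\item \textbf{Generic itineraries.} For odd $3\le x\le m-3$ the itinerary is $G,C,D,E,B$, with the diagonal hit at $(d,d)$, $d=\tfrac{x+m-1}{2}$. For even $6\le x\le m-4$ it is $G,C,E,D,B$: the $E$-move restores the even lane $x$, which then meets the diagonal at $(\tfrac x2,\tfrac x2)$. In both cases the net increment is $-1-2+1+1=-1$; there is never a $D$ followed by two $E$'s.
\item \textbf{First step is not always $C$.} This fails at $x=m-2$ (for $m\ge 8$). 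There $(m-1,m-1)$ is an $E$-point, because $C$ excludes $x=m-1$, so the lane runs $G,E,C,D,B$. It must be a separate boundary case, alongside $x=m-1$ (whose first move is $D$) and $x=4$.
\item \textbf{The sum check.} $\sum_x\rho_2(x)=m^2$ is only a consistency check. It cannot certify the individual traces, since compensating errors would go unnoticed.
\end{itemize}

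To turn the plan into a proof, you need the explicit itinerary in each case. You also need a check that no other branch fires between consecutive displayed defects. That check is where the exclusions are actually used: $x\notin\{0,2,m-1\}$ in $D$, $x\neq m-1$ in $C$, $y\notin\{0,m-2\}$ in $E$, and $(x,y)\notin\{(1,1),(m-1,1)\}$ in $B$.
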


\begin{proof}
We treat the boundary values $x\in\{0,1,2,4,m-2,m-1\}$ individually and then give the generic odd-$x$ and even-$x$ formulas.
Fix $x\in \Zm$ and start from $(x,0)\in L_2$.

\smallskip
\noindent\textbf{Case $x=0$.}
The point $(0,0)$ is branch $E$, so
\[
(0,0)\xrightarrow{E}(1,0).
\]
Hence $T_2(0)=1$ and $\rho_2(0)=1$.

\smallskip
\noindent\textbf{Case $x=1$.}
The orbit is
\[
(1,0)\xrightarrow{G}(2,m-1)\xrightarrow{C}(2,m-2)
\xrightarrow{G^{\frac m2-2}}\left(\frac m2,\frac m2\right)
\xrightarrow{D}\left(\frac m2,\frac m2-2\right)
\xrightarrow{G^{\frac m2-3}}(m-3,1)\xrightarrow{B}(m-1,0).
\]
Between the displayed defect points, no other special branch can occur:
before reaching $\left(\frac m2,\frac m2\right)$ the orbit has $x<m-1$, $y\notin\{0,1,m-1\}$, and meets the diagonal only at the displayed point; after the $D$-move the next special point is $(m-3,1)$.
Therefore
\[
T_2(1)=m-1,\qquad
\rho_2(1)=1+1+\left(\frac m2-2\right)+1+\left(\frac m2-3\right)+1=m-1.
\]

\smallskip
\noindent\textbf{Case $x=2$.}
The orbit is
\begin{multline*}
(2,0)\xrightarrow{G}(3,m-1)\xrightarrow{C}(3,m-2)
\xrightarrow{G^{m-4}}(m-1,2)\xrightarrow{E}(0,2)\xrightarrow{G}(1,1) \\
\xrightarrow{D}(1,m-1)\xrightarrow{C}(1,m-2)
\xrightarrow{G^{m-3}}(m-2,1)\xrightarrow{B}(0,0).
\end{multline*}
Again, every point between the displayed defects is generic:
before $(m-1,2)$ the orbit avoids the diagonal and the line $y=1$;
after $(0,2)$ the only diagonal hit is $(1,1)$;
after $(1,m-2)$ the next defect is $(m-2,1)$.
Hence
\[
T_2(2)=0,\qquad
\rho_2(2)=1+1+(m-4)+1+1+1+1+(m-3)+1=2m.
\]

\smallskip
\noindent\textbf{Case $x$ odd, $3\le x\le m-3$.}
Set
\[
d=\frac{x+m-1}{2}.
\]
Then the orbit is
\begin{multline*}
(x,0)\xrightarrow{G}(x+1,m-1)\xrightarrow{C}(x+1,m-2)
\xrightarrow{G^{\frac{m-x-3}{2}}}(d,d)\xrightarrow{D}(d,d-2) \\
\xrightarrow{G^{\frac{m-x-1}{2}}}(m-1,x-2)\xrightarrow{E}(0,x-2)
\xrightarrow{G^{x-3}}(x-3,1)\xrightarrow{B}(x-1,0).
\end{multline*}
The first generic block runs on the line of ordinary sum $x+m-1$, which meets the diagonal exactly at $(d,d)$.
After the $D$-move the orbit runs on the line of ordinary sum $x+m-3$, and its next defect is $(m-1,x-2)$.
After the $E$-move the orbit runs on the line of ordinary sum $x-2$, whose first defect is $(x-3,1)$.
Thus
\[
T_2(x)=x-1,
\]
and
\[
\rho_2(x)=
1+1+\frac{m-x-3}{2}+1+\frac{m-x-1}{2}+1+(x-3)+1
=
m.
\]

\smallskip
\noindent\textbf{Case $x=4$.}
Here the orbit is
\[
(4,0)\xrightarrow{G}(5,m-1)\xrightarrow{C}(5,m-2)
\xrightarrow{G^{m-6}}(m-1,4)\xrightarrow{E}(0,4)
\xrightarrow{G^2}(2,2)\xrightarrow{A}(3,0).
\]
The displayed formula also covers the border case $m=6$, where the block $G^{m-6}$ is empty.
Hence
\[
T_2(4)=3,\qquad \rho_2(4)=1+1+(m-6)+1+2+1=m.
\]

\smallskip
\noindent\textbf{Case $x$ even, $6\le x\le m-4$.}
Set
\[
e=\frac{x}{2}.
\]
Then the orbit is
\begin{multline*}
(x,0)\xrightarrow{G}(x+1,m-1)\xrightarrow{C}(x+1,m-2)
\xrightarrow{G^{m-x-2}}(m-1,x)\xrightarrow{E}(0,x) \\
\xrightarrow{G^e}(e,e)\xrightarrow{D}(e,e-2)
\xrightarrow{G^{e-3}}(x-3,1)\xrightarrow{B}(x-1,0).
\end{multline*}
Before $(m-1,x)$ there is no diagonal hit because the first generic block lies on a line of odd ordinary sum.
After the $E$-move the orbit lies on the line of ordinary sum $x$, whose first diagonal point is $(e,e)$.
After the $D$-move the orbit lies on the line of ordinary sum $x-2$, whose next defect is $(x-3,1)$.
Therefore
\[
T_2(x)=x-1,
\]
and
\[
\rho_2(x)=
1+1+(m-x-2)+1+e+1+(e-3)+1
=
m.
\]

\smallskip
\noindent\textbf{Case $x=m-2$ (so necessarily $m\ge 8$).}
The orbit is
\begin{multline*}
(m-2,0)\xrightarrow{G}(m-1,m-1)\xrightarrow{E}(0,m-1)\xrightarrow{C}(0,m-2) \\
\xrightarrow{G^{\frac{m-2}{2}}}\left(\frac{m-2}{2},\frac{m-2}{2}\right)
\xrightarrow{D}\left(\frac{m-2}{2},\frac{m-6}{2}\right)
\xrightarrow{G^{\frac{m-8}{2}}}(m-5,1)\xrightarrow{B}(m-3,0).
\end{multline*}
Hence
\[
T_2(m-2)=m-3,\qquad
\rho_2(m-2)=1+1+1+\frac{m-2}{2}+1+\frac{m-8}{2}+1=m.
\]

\smallskip
\noindent\textbf{Case $x=m-1$.}
The orbit is
\begin{multline*}
(m-1,0)\xrightarrow{D}(m-1,m-2)\xrightarrow{C}(m-1,m-3)\xrightarrow{E}(0,m-3)\\
\xrightarrow{G^{m-4}}(m-4,1)\xrightarrow{B}(m-2,0).
\end{multline*}
So
\[
T_2(m-1)=m-2,\qquad
\rho_2(m-1)=1+1+1+(m-4)+1=m.
\]

The seven cases exhaust $x\in \Zm$, the combined return times sum to $m^2$, and the displayed formulas follow.
\end{proof}

\begin{corollary}[color $2$ is Hamilton for Route~E]\label{cor:R2-main}
For every even $m\ge 6$, the associated map $R_2$ is a single $m^2$-cycle on $P_0$.
\end{corollary}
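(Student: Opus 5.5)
The plan is to apply the first-return counting principle, Lemma~\ref{lem:counting}, to the map $F=R_2$ on $X=P_0$ with $N=m^2$. Work in the frame $(x,y)=(i,i+k)$ of Lemma~\ref{lem:R2-main-xy}, which is a bijective linear change of coordinates on $\Zm^2$ (determinant $1$), and take $L=L_2=\{(x,0)\}$. Proposition~\ref{prop:R2-main-data} already supplies, for every $x\in\Zm$, a finite first-return time $\rho_2(x)$ and the image $T_2(x)$. So the hypothesis that every point of $L$ returns is met, and only the two numbered conditions of Lemma~\ref{lem:counting} remain to be checked.

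For condition (i), I would show that $T_2$ is a single $m$-cycle on $L_2$ by following the orbit of $0$ directly from the formula:
\[
0\mapsto 1\mapsto m-1\mapsto m-2\mapsto \cdots\mapsto 3\mapsto 2\mapsto 0 .
\]
Here $0\mapsto 1$ and $1\mapsto m-1$ are the special branches. The rule $x\mapsto x-1$ then carries $m-1$ down through every value to $2$, since it applies for $3\le x\le m-1$. Finally $2\mapsto 0$. This visits all $m$ residues exactly once before closing, so $T_2$ is one $m$-cycle. The case $m=6$ recovers the cycle displayed in Example~\ref{ex:m6-color2}.

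For condition (ii), I would sum the return times:
\[
\sum_{x\in\Zm}\rho_2(x)=1+(m-1)+2m+(m-3)\,m=m^2 .
\]
Lemma~\ref{lem:counting} then gives that $R_2$ is a single $m^2$-cycle on $P_0$, in either coordinate system.

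The only real content lies upstream, in the orbit traces of Proposition~\ref{prop:R2-main-data}: one must check that no unlisted defect point is hit inside the generic blocks, and that the seven cases cover $\Zm$ for every even $m\ge 6$, including the small-$m$ coincidences such as $x=4=m-2$ at $m=6$. Since that proposition is assumed, the main point to watch here is simply that its case list exhausts $\Zm$ without double counting, so that the sum above is legitimate.

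Optionally, Lemma~\ref{lem:return-validity} then shows that the color-$2$ layer maps are bijective. Lemma~\ref{lem:return} lifts this to a Hamilton cycle for $f_2$ on $V$.
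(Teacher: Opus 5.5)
Your proposal is correct and follows the paper's own proof. The paper reads off from Proposition~\ref{prop:R2-main-data} that $T_2$ traces the single cycle $(0,1\mid m-1,m-2,\dots,2)$, checks $\sum_{x\in\Zm}\rho_2(x)=1+(m-1)+2m+(m-3)m=m^2$, and then invokes Lemma~\ref{lem:counting}. Your extra checks (the frame $(x,y)=(i,i+k)$ is a bijection, and the seven cases cover $\Zm$ without overlap) are sound; the paper leaves them implicit.
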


\begin{proof}
By Proposition~\ref{prop:R2-main-data}, the induced lane map follows the cyclic order
\[
(0,1\mid m-1,m-2,\dots,2),
\]
namely $T_2(0)=1$, $T_2(1)=m-1$, $T_2(2)=0$, and $T_2(x)=x-1$ for $3\le x\le m-1$.
Therefore $T_2$ is a single $m$-cycle.
Also
\[
\sum_{x\in \Zm}\rho_2(x)=1+(m-1)+2m+(m-3)m=m^2.
\]
Lemma~\ref{lem:counting} now implies that $R_2$ is a single $m^2$-cycle.
\end{proof}

\begin{theorem}[Route~E for even $m\ge 6$]\label{thm:routeE-even-main}
For every even integer $m\ge 6$, the Route~E direction assignment (Definition~\ref{def:routeE}) is a valid coloring of $D_3(m)$, and its three color classes are directed Hamilton cycles.
\end{theorem}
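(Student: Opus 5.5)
The plan is to prove, for each color $c$, that the return map $R_c=f_c^m|_{P_0}$ is a single $m^2$-cycle on $P_0$. Both conclusions of Theorem~\ref{thm:routeE-even-main} then follow from earlier results. A single cycle is in particular a bijection, so Lemma~\ref{lem:return-validity} shows that each layer map $g_t:P_t\to P_{t+1}$ of color $c$ is bijective. Hence each $f_c$ is a permutation of $V$, and the direction assignment of Definition~\ref{def:routeE} is a coloring; Lemma~\ref{rem:layer0-partition} guarantees that it is well defined. Lemma~\ref{lem:return} then lifts each single $m^2$-cycle on $P_0$ to a directed Hamilton cycle on $V$. Since every vertex receives a permutation triple, the three color classes partition the arc set. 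Note that Lemma~\ref{lem:return} only uses layer-by-layer iteration, so it applies once validity is known; equally, the cycle-counting argument on $P_0$ never presupposes that $f_c$ is a permutation.

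Color $2$ is already done by Corollary~\ref{cor:R2-main}, independently of $m \bmod 6$. For colors $1$ and $0$ I would follow the same pattern. First, from Proposition~\ref{prop:routeE-transducer} and the layer-$0$ partition, tabulate the piecewise formulas for $R_1,R_0$; this is a finite case analysis of the words $\omega_c$ on the sets $X_\bullet$ or $Y_\bullet$ and the isolated points. Next, conjugate by $\Phi_1,\Phi_0$ of Lemma~\ref{lem:routeE-bulkcoords} to reach the finite-defect normal form of Lemma~\ref{lem:routeE-finite-defect}. Then apply the itinerary reduction, Lemma~\ref{lem:routeE-itinerary}, on the transversal $L=\{t=0\}$ to compute the lane map $T_c$ and the return times $\rho_c$. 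For each start lane I would list the defect lines hit by the vertical bulk ray $(u,0)\mapsto(u,t)$ in order of height $t$. These heights are affine in $u$, so the itinerary is constant on arithmetic families of lanes, separated by $O(1)$ boundary lanes. On each family the orbit trace, written as in Proposition~\ref{prop:R2-main-data}, gives $T_c$ and $\rho_c$ explicitly.

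In Case~I for color $1$ the formula should have the shape recorded in Proposition~\ref{prop:primary-geometry-obstruction}: generically $x\mapsto x+3$, with terminal values $2,1,0,3$. For $m\equiv 0,2\pmod 6$ one checks that these terminal values chain the three residue strands into one $m$-cycle. In Case~II the extra line $u+t=1$ changes the drift to $+2$ on even lanes and $+6$ on odd lanes, as in Example~\ref{ex:m10-color1}, and one checks that the resulting interleaving is again a single cycle. Color $0$ is handled similarly. In Case~I the defects lie on $t=0$ and $u=t+1$. In Case~II the interior drift becomes $+4$ and a further split by $m \bmod 12$ is needed, which only reorders the residue-$4$ blocks cyclically. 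In every case I would verify $\sum_u\rho_c(u)=m^2$ by adding the family contributions; each generic family should contribute $\rho=m$, and the boundary lanes should compensate one another. Lemma~\ref{lem:counting} then gives that $R_c$ is a single $m^2$-cycle.

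The main obstacle is the closure step. Once $T_c$ is known piecewise-arithmetically, one must show that the induced permutation of the ordered family-blocks, the splice permutation, is a single cycle, and this must hold uniformly in $m$ within each congruence class. My first approach would be to collapse each arithmetic block to a node and track the block-to-block transitions, which reduces the question to a fixed finite permutation for each residue class mod $6$ (mod $12$ for color $0$). That finite permutation can be checked by hand. The other delicate point is confirming that no unexpected defect intervenes inside a generic block for small $m$, such as $m=6,8,10$; I would treat these small cases by direct orbit traces, as in Example~\ref{ex:m6-color2}.
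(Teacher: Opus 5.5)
Your proposal is correct and follows the paper's proof essentially step for step. It shows each $R_c$ is a single $m^2$-cycle via the transducer formulas, bulk coordinates, lane maps $T_c$, a finite splice permutation (Lemma~\ref{lem:splice-graph}, Proposition~\ref{prop:routeE-splice}) and Lemma~\ref{lem:counting}, then gets validity from Lemma~\ref{lem:return-validity} and Hamiltonicity from Lemma~\ref{lem:return}. One slip in your bookkeeping forecast: generic lanes have $\rho=m$ only for color~$2$. For color~$1$ each stall costs an extra step, giving $\rho_1=m+2$, and $m+4$ on odd lanes in Case~II. For color~$0$ the defect jumps advance the clock, giving $\rho_0=m-1$ in Case~I and $m-2$ in Case~II. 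So $\sum\rho_c=m^2$ holds because this generic excess or deficit is offset by the boundary lanes, not because the boundary lanes cancel among themselves.
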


\begin{proof}
The even proof may now be read as four checkpoints.
\begin{enumerate}[label=\textup{(\roman*)},leftmargin=2.5em]
\item \textbf{Why the Case~II repair family is needed.}
Proposition~\ref{prop:primary-geometry-obstruction} identifies the first obstruction in the primary geometry, and Proposition~\ref{prop:boundary-no-go} shows that within the actual Case~II scaffold the remaining boundary corrections do not remove it; the Case~II repair family is genuine bulk repair.

\item \textbf{From Definition~\ref{def:routeE} to explicit return maps.}
Proposition~\ref{prop:routeE-transducer} reduces the derivation of the repaired return maps to a finite low-layer table, and Appendix~\ref{app:routeE-derivation} carries out that evaluation, yielding the closed-form formulas for $R_0,R_1,R_2$ recorded in Appendix~\ref{app:routeE}.
Lemmas~\ref{lem:routeE-bulkcoords}, \ref{lem:routeE-finite-defect}, and \ref{lem:routeE-itinerary} show that, in adapted working coordinates, each return map is governed by a generic bulk branch together with finitely many affine defect components.

\item \textbf{Cycle closure on $P_0$.}
The main text handles color~$2$: Proposition~\ref{prop:R2-main-data} derives the first-return formulas, and Corollary~\ref{cor:R2-main} shows that $R_2$ is a single $m^2$-cycle on $P_0$.
For colors $1$ and $0$, Appendix~\ref{app:routeE-cycle} derives the repaired first-return formulas.
Propositions~\ref{prop:R1-caseI}, \ref{prop:R1-caseII}, \ref{prop:R0-caseI-data}, and \ref{prop:R0-caseII-data} give the lane maps $T_c$ and return times $\rho_c$; in Case~II these formulas encode the altered interior shifts created by the Case~II repair family.
Proposition~\ref{prop:routeE-splice} packages the remaining closure step as a finite splice permutation on arithmetic family-blocks, i.e.\ at the finitely many places where the lane dynamics deviate from the pure odometer carry.
In every congruence class that splice permutation is a single cycle, so each $T_c$ is a single $m$-cycle.
Corollaries~\ref{cor:R1-all} and~\ref{cor:R0-all}, together with Lemma~\ref{lem:counting}, then show that $R_1$ and $R_0$ are single $m^2$-cycles.

\item \textbf{Lift from $P_0$ to $V$.}
At this point every $R_c$ is bijective, so Lemma~\ref{lem:return-validity} implies that the Route~E direction assignment is a valid coloring.
Lemma~\ref{lem:return} then lifts the $m$-step return-map information to the full vertex set and shows that each color map is a directed Hamilton cycle on $V$.
\end{enumerate}
Because every Route~E direction triple is a permutation of $(0,1,2)$, the three Hamilton cycles are arc-disjoint and together cover all arcs of $D_3(m)$.
\end{proof}

\section{The finite case \texorpdfstring{$m=4$}{m=4}}\label{sec:m4}

Route~E is designed for $m\ge 6$; the case $m=4$ is handled by a separate finite witness.
Appendix~\ref{app:m4} prints the complete direction table, from which the three Hamilton cycles can be read off by finite iteration; a machine-readable supplement records the orbits explicitly.

\begin{proposition}[The finite case $m=4$]\label{prop:m4-main}
The graph $D_3(4)$ admits a decomposition into three directed Hamilton cycles.
\end{proposition}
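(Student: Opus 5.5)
The plan is to exhibit an explicit direction assignment $\delta=(d_0,d_1,d_2)$ on $V=(\mathbb Z_4)^3$ and verify it with the same return-map machinery as in the general case. Every vertex will receive a permutation of $(0,1,2)$, so arc-disjointness and full coverage of the $3\cdot 64$ arcs are automatic. By Lemma~\ref{lem:return-validity} and Lemma~\ref{lem:return}, it then suffices to check that each of the three $4$-step return maps $R_c=f_c^4|_{P_0}$ is a single $16$-cycle on $P_0\cong\mathbb Z_4^2$. Bijectivity of each $R_c$ already upgrades $\delta$ to a coloring, and the cycle condition lifts each color to a Hamilton cycle on all $64$ vertices.

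\textbf{Step 1: the search.} Corollary~\ref{cor:parity-barrier} excludes any Kempe-from-canonical witness, since the final sign product must be $-1$. The search therefore cannot be a sequence of Kempe swaps; it will be a direct search over direction assignments. To keep it small, I would follow the Route~E template. Keep layer $S=3$ canonical, use the simple $i=0$ and $j=0$ branch rules on layers $1$ and $2$, and search only over the $16$ layer-$0$ triples, and layer-$1$ and layer-$2$ triples if needed. This is at most $6^{48}$ choices in principle, but constraint propagation on the return maps prunes it quickly. By Proposition~\ref{prop:routeE-transducer}, each $R_c$ is computed from the three-letter low-layer words.

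\textbf{Step 2: the certificate.} The witness found is printed as a table in Appendix~\ref{app:m4}. For each color I would record the sixteen values of $R_c$ as a single cyclic sequence on $P_0$, which is the certificate to be checked. Alternatively, I would give the full $64$-vertex orbit of $f_c$ starting at $(0,0,0)$. As a hand-checkable variant, one can pick a transversal $L\subseteq P_0$ and apply Lemma~\ref{lem:counting}: verify that the induced map $T_c$ is a single cycle and that $\sum_{x\in L}\rho_c(x)=16$.

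\textbf{Main obstacle.} The main difficulty is finding the witness, not verifying it. Route~E's defect families need $m\ge 6$ to be disjoint and well defined, since ranges such as $1\le i\le m-3$ degenerate at $m=4$, so no formula can simply be specialized. I expect a small random or backtracking search on layer~$0$ alone, with the Route~E rules fixed on layers $1$ and $2$, to succeed. If it does not, I would free layers $1$ and $2$ as well. Once the table is fixed, the remaining verification is a routine finite iteration. The Lean formalization and the supplementary orbit data serve as a cross-check.
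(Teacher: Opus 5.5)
Your verification scheme is sound. Every arc raises $S$ by one whatever table is used, so Lemma~\ref{lem:return-validity} and Lemma~\ref{lem:return} reduce everything to checking that the three $4$-step return maps on the $16$ points of $P_0$ are single $16$-cycles. That is the same computation as the paper's $64$-step orbit check from $(0,0,0)$, just grouped by visits to $P_0$.

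The gap is in the other half, which for a finite existence statement is the whole content of the proof: the witness itself. Your Step~1 replaces it with a search that you only \emph{expect} to succeed inside a Route~E-shaped template: layer $S=3$ canonical, the $i=0$/$j=0$ rules on layers $1$ and $2$, free layer $0$, and at worst free layers $0$--$2$. Nothing in the proposal shows that this template contains a Hamilton decomposition at $m=4$.

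You also cannot close this by saying the search output is the table of Appendix~\ref{app:m4}, because that table is not of your form.
\begin{itemize}
\item It is not canonical on layer $3$: for example, $(0,0,3)$ carries \texttt{021} and $(0,1,2)$ carries \texttt{120}.
\item It does not follow the Route~E rules on layers $1$ and $2$: $(0,0,1)$ has $i=0$ but carries \texttt{012}, not \texttt{102}; $(0,0,2)$ has $j=0$ but carries \texttt{120}, not \texttt{210}.
\end{itemize}
So the paper's witness lies outside even your fallback search space, since you never free layer $3$. As written, your existence claim rests on an unverified expectation plus a misdescription of the cited table.

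The repair is to drop the search and take the explicit table as the certificate, which is exactly what the paper does. Check that every entry is a permutation of $(0,1,2)$. Then iterate each color four steps at a time from the points of $P_0$, or $64$ steps from $(0,0,0)$, and confirm a single cycle. For this table you cannot compute $R_c$ from three-letter words via Proposition~\ref{prop:routeE-transducer}, because layers $1$--$3$ are neither Route~E nor canonical; you must iterate through all four layers directly. With that change your argument coincides with the paper's proof.
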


\begin{proof}
Appendix~\ref{app:m4} records a complete direction table for a coloring of $D_3(4)$.
Starting from that table, each color orbit is obtained by finite iteration from any chosen initial vertex.
The three orbits starting from $(0,0,0)$ are recorded in the accompanying machine-readable supplement; each has length $64$ and agrees with the displayed table.
Thus the finite verification splits into two transparent steps: the table shows that every vertex sees a permutation of the three allowed directions, hence defines a valid direction assignment, and the extracted orbit data show that each color map is a single $64$-cycle on $V$, hence a bijection; together these establish a valid coloring.
Therefore the three color classes are Hamilton cycles and form a Hamilton decomposition of $D_3(4)$.
\end{proof}

\section{Completion for all \texorpdfstring{$m\ge 3$}{m >= 3}}\label{sec:completion}

\begin{proof}[Proof of Theorem~\ref{thm:main-all}]
If $m$ is odd, apply Theorem~\ref{thm:odd-hamilton}.
If $m=4$, apply Proposition~\ref{prop:m4-main}.
If $m$ is even and $m\ge 6$, apply Theorem~\ref{thm:routeE-even-main}.
These cases exhaust all integers $m\ge 3$.
\end{proof}

\section{Discussion and outlook}\label{sec:discussion}

The paper is best read as a theorem about an odometer hidden inside the directed torus.
The layer function provides the ambient clock, and the proof identifies the relevant clock-and-carry dynamics on the section $P_0$.
In the odd case, this yields an explicit affine conjugacy to the standard odometer.
In the even case, the parity barrier rules out the canonical Kempe route, so the construction rebuilds the mechanism directly from a low-layer repair and a further first-return reduction to explicit lanes.
The boundary case $m=4$ is the finite exceptional instance, recorded by a table rather than by a uniform arithmetic family.
The accompanying supplementary files are independent regression checks and do not form part of the proof.

Several structural questions remain.
First, can the even-case analysis be compressed into a Route~E-specific critical-lane theorem that reads off the repaired arithmetic family-blocks and the splice permutation directly from the affine defect heights, bypassing most of the remaining itinerary bookkeeping?
Relatedly, can the color-$0$ $m\bmod 12$ split be derived uniformly as a cyclic reordering inside a single residue-$4$ splice picture?
Second, the proof suggests a higher-dimensional program in which low-layer defect arrangements are analyzed by nested return sections until a finite-defect odometer emerges.
The sign-product invariant already points to genuine parity obstructions for Kempe-from-canonical strategies, while the Route~E construction suggests that obstruction-breaking witnesses should still be organized by clocks, carries, and finitely many splices.
The first natural test case is the directed $4$-torus, where one can ask whether the same return-section viewpoint still isolates a finite-defect clock-and-carry core.
More broadly, it is natural to ask whether composite higher-dimensional cases admit comparable nested return maps and finite-defect odometer models.
It would be especially interesting to understand whether higher-dimensional directed tori admit equally clean nested-return descriptions and whether odd and even ambient dimensions exhibit genuinely different parity behavior.

\paragraph{Software and formalization.}
Verification scripts and machine-readable witness data accompany the arXiv submission as ancillary files.
A Lean~4 formalization covering the odd-$m$ affine conjugacy argument and the even-$m$ Route~E construction is maintained at
\url{https://github.com/aria1th/Torus-Hamilton-Decomposition/}.
The entire formalization compiles against current Mathlib (\texttt{lake build TorusD3Odometer}).
Large language models assisted with writing and coding; all mathematical content was verified by the author.

\appendix

\paragraph{How to read the appendices.}
Appendix~\ref{app:routeE} is the lookup appendix: it records the closed formulas for the Route~E return maps $R_0,R_1,R_2$.
Appendix~\ref{app:routeE-derivation} is the derivation appendix: starting from Definition~\ref{def:routeE}, it evaluates the low-layer transducer and proves Proposition~\ref{prop:routeE-return-formulas}, which is the source of the displayed formulas.
Appendix~\ref{app:routeE-cycle} is the cycle-structure appendix: taking those formulas as input, it derives the first-return maps and return-time sums for colors~$1$ and~$0$ and packages the last closure step in Proposition~\ref{prop:routeE-splice}.
A reader mainly interested in proof flow may therefore read the appendices in the order Appendix~\ref{app:routeE}, then Appendix~\ref{app:routeE-cycle}, and finally Appendix~\ref{app:routeE-derivation}; Table~\ref{tab:appendix-cycle-guide} gives a color-by-color guide to the Hamiltonicity appendix.
Appendix~\ref{app:m4} and Appendix~\ref{app:verification} are archival supplements for the finite witness and the independent regression checks.
\section{Route~E return maps}\label{app:routeE}

Definition~\ref{def:routeE} appears in Section~\ref{sec:even}.
This appendix records the closed-form return maps on $P_0$ derived from that definition in Appendix~\ref{app:routeE-derivation} and analyzed in Appendix~\ref{app:routeE-cycle}.

\paragraph{Return maps on $P_0$.}
Parameterize $P_0=\{S=0\}$ by
\[
v(i,k):=(i,-i-k,k),\qquad (i,k)\in\mathbb Z_m^2.
\]
For the explicit Route~E direction assignment, define $R_c=f_c^m|_{P_0}$.
(We write $R_c$ rather than $F_c$ to distinguish the associated $m$-step maps of Route~E from the return maps $F_c$ of the Kempe-swap coloring analyzed in Section~\ref{sec:affine}.)
The following piecewise formulas are derived directly from Definition~\ref{def:routeE} in Appendix~\ref{app:routeE-derivation}.

\medskip
\noindent\textbf{Case I: $m\equiv 0,2\pmod 6$.}
\[
R_0(i,k)=
\begin{cases}
(i-2,\ k) & (i,k)=(0,0),\\
(i-3,\ k+3) & (i,k)=(1,m-1),\\
(i-2,\ k+2) & (i,k)=(m-2,0),\\
(i-1,\ k) & (i,k)=(m-1,1),\\
(i-3,\ k+2) & i+k\equiv 1\ \text{and}\ (i,k)\neq (1,0),\\
(i-1,\ k+1) & ((k=0\ \text{and}\ 1\le i\le m-3)\ \text{or}\ (i,k)\in\{(0,m-1),(m-2,1)\}),\\
(i-2,\ k+1) & \text{otherwise.}
\end{cases}
\]
\[
R_1(i,k)=
\begin{cases}
(i+2,\ k) & \begin{aligned}[t]
            &\bigl( (i+k\equiv m-1\ \text{and}\ 1\le i\le m-3)\\
            &\quad\text{or}\ (i,k)\in\{(0,0),(m-2,0),(m-1,m-1)\} \bigr),
            \end{aligned}\\[2.5ex]
(i+1,\ k) & \begin{aligned}[t]
            &\bigl( (i=0\ \text{and}\ 1\le k\le m-2)\\
            &\quad\text{or}\ (i,k)\in\{(1,m-1),(m-2,1)\} \bigr),
            \end{aligned}\\[2.5ex]
(i+1,\ k+1) & \text{otherwise.}
\end{cases}
\]

\medskip
\noindent\textbf{Case II: $m\equiv 4\pmod 6$.}
\[
R_0(i,k)=
\begin{cases}
(i-2,\ k) & (i,k)=(0,0),\\
(i-1,\ k) & (i,k)=(m-1,1),\\
(i-3,\ k+3) & (i,k)\in\{(1,m-1),(2,m-2)\},\\
(i-2,\ k+2) & \bigl( (i=1\ \text{and}\ 0\le k\le m-3)\ \text{or}\ (i,k)\in\{(2,m-1),(m-2,0)\} \bigr),\\
(i-3,\ k+2) & i+k\equiv 1\ \text{and}\ (i,k)\notin\{(1,0),(2,m-1)\},\\
(i-1,\ k+1) & \begin{aligned}[t]
              &\bigl( (k=0\ \text{and}\ 2\le i\le m-3)\\
              &\quad\text{or}\ (i,k)\in\{(0,m-1),(1,m-2),(m-2,1)\} \bigr),
              \end{aligned}\\[2.5ex]
(i-2,\ k+1) & \text{otherwise.}
\end{cases}
\]
\[
R_1(i,k)=
\begin{cases}
(i+2,\ k) & \begin{aligned}[t]
            &\bigl( (i+k\equiv m-1\ \text{and}\ 2\le i\le m-3)\\
            &\quad\text{or}\ (i,k)\in\{(0,0),(1,0),(m-2,0),(m-1,m-1)\} \bigr),
            \end{aligned}\\[2.5ex]
(i+1,\ k) & \begin{aligned}[t]
            &\bigl( (i=0\ \text{and}\ 1\le k\le m-2)\ \text{or}\ (i=1\ \text{and}\ 1\le k\le m-1)\\
            &\quad\text{or}\ (i,k)\in\{(2,m-2),(2,m-1),(m-2,1)\} \bigr),
            \end{aligned}\\[2.5ex]
(i+1,\ k+1) & \text{otherwise.}
\end{cases}
\]

\medskip
\noindent\textbf{Uniform for all even $m\ge 6$.}
\[
R_2(i,k)=
\begin{cases}
(i+1,\ k-3) & (i,k)=(2,0),\\
(i+2,\ k-3) & i+k\equiv 1\ \text{and}\ (i,k)\notin\{(1,0),(m-1,2)\},\\
(i,\ k-1) & ((k\equiv m-1-i\ \text{and}\ i\not\equiv m-1)\ \text{or}\ (i,k)=(m-1,m-1)),\\
(i,\ k-2) & ((k=0\ \text{and}\ i\notin\{0,2,m-1\})\ \text{or}\ (i,k)=(m-1,1)),\\
(i+1,\ k-1) & ((i=m-1\ \text{and}\ k\notin\{1,m-1\})\ \text{or}\ (i,k)=(0,0)),\\
(i+1,\ k-2) & \text{otherwise.}
\end{cases}
\]

\section{Derivation of the Route~E return maps}\label{app:routeE-derivation}

In this appendix we derive the return maps recorded in Appendix~\ref{app:routeE} directly from the low-layer Route~E rule of Definition~\ref{def:routeE}.
The argument does not appeal to the defect-cycle structure; it tracks only the first three layers $S=0,1,2$.

\begin{proposition}\label{prop:routeE-return-formulas}
For every even $m\ge 6$, the return maps $R_0,R_1,R_2$ induced by Definition~\ref{def:routeE} are exactly the piecewise formulas displayed in Appendix~\ref{app:routeE}.
\end{proposition}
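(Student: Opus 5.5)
The plan is to reduce everything to Proposition~\ref{prop:routeE-transducer}. Each return map is determined by the three-letter word $\omega_c(i,k)=(a_c,b_c,c_c)$ through
\[
R_0=(i+N_0-3,\ k+N_2),\qquad R_1=(i+N_0,\ k+N_2),\qquad R_2=(i+N_0,\ k+N_2-3),
\]
so it suffices to compute $\omega_c$ on every point of $P_0$. Layers $1$ and $2$ act as a transducer. The letter $b_c$ is fixed by the single test $i_1=0$ with $i_1=i+\mathbf 1_{a_c=0}$; for color~$1$ it is always $0$. The letter $c_c$ is fixed by the test $j_2=0$ with $j_2=-i-k+\mathbf 1_{a_c=1}+\mathbf 1_{b_c=1}$; for color~$1$ it is always $1$. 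The word is therefore a function of the layer-$0$ triple together with two affine equalities in $(i,k)$.

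First I treat the default region. There the layer-$0$ triple is $(1,2,0)$, and Lemma~\ref{lem:routeE-generic} gives the bulk branches whenever the tests fail.

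\begin{itemize}
\item Color~$0$ with $a_0=1$: the $i$-test is $i=0$ and the $j$-test is $-i-k+1=0$, i.e.\ $i+k\equiv1$.
\item Color~$2$ with $a_2=0$: the $i$-test is $i+1=0$, i.e.\ $i=m-1$. The $j$-test is $i+k\equiv0$ when $b_2=2$ and $i+k\equiv1$ when $b_2=1$.
\item Color~$1$ with $a_1=2$: the word is $201$ always, except that it changes on the families where $a_1\neq2$.
\end{itemize}

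Reading off $N_0$ and $N_2$ in each subcase produces the branches $(i-3,k+2)$ on $i+k\equiv1$ for color~$0$, and the branches on $i+k\equiv1$, $i+k\equiv m-1$ and $i=m-1$ for color~$2$. In this substitution I will translate each line condition using $j=-i-k$.

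Next I run through the overridden families of Definition~\ref{def:routeE-layer0-families}, rewritten in $(i,k)$ coordinates. Writing $X_{102}=\{j=1\}=\{i+k\equiv m-1\}$, $X_{021}=\{k=0\}$ and $X_{210}=\{i=0\}$, each family comes with its stated index range. For each family I insert its triple as $a_c$, re-evaluate the two tests, and record the word. For example, on $X_{102}$ color~$1$ has $a_1=0$, giving the word $001$ and the displacement $(i+2,k)$, which matches the first $R_1$ branch with $1\le i\le m-3$. On $X_{210}$ color~$1$ has $a_1=1$, giving $101$ and the displacement $(i+1,k)$. I then handle the isolated points $(m-2,1,1)$, $(m-2,2,0)$, and in Case~II the extra points together with the $Y_{210}$ enlargement on $i=1$. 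I do Case~I first; for Case~II only the modified ranges and the added track need recomputation, and since color~$2$'s letter agrees on all families with an affected track, this explains why $R_2$ is uniform.

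The main obstacle is the bookkeeping at the intersections of three kinds of condition: the family lines, the endpoint points, and the secondary test lines such as $i+k\equiv1$, $i=m-1$ and $k=0$. There a triggered test can coincide with a family point, and that produces the isolated exceptions like $(1,0)$, $(m-1,2)$, $(2,0)$ and $(m-1,1)$. To handle this I will organize the work as a finite table. The rows are the layer-$0$ classes from Lemma~\ref{rem:layer0-partition}, and the columns record the truth values of the two tests. For each line class I determine which of its points can make a test true, which is a linear equation in one parameter and so has at most a couple of solutions. Every solution point is listed separately. After this, for each color, I collect the cells that share the same displacement and check that their union equals the exact set written in the appendix formula. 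As a sanity check, every branch set should be confirmed to partition $\mathbb Z_m^2$.
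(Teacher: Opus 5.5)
Your plan is correct and is essentially the paper's own proof. The paper likewise reduces to the low-layer words via the $N_0,N_2$ count, rewrites the layer-$0$ partition as the sets $A,\dots,F$ in $(i,k)$-coordinates, reads color~$1$ off from the word $(a_1,0,1)$, and evaluates the two tests $i_1=0$ and $j_2=0$ row by row in tables for colors~$0$ and~$2$, before grouping by displacement. Your Case~II shortcut is valid and a bit more economical than the paper, which recomputes the full color-$2$ table: only points whose layer-$0$ triple changes need recomputing, and the color-$2$ letter is the same on all of them, which forces $R_2$ to be uniform. One small slip: the default triple contributes no color-$2$ branch on $i+k\equiv m-1$, since its only default point there, $(m-1,0)$, falls in the $i=m-1$ branch; the $(i,k-1)$ branch comes instead from the $102$ and $012$ points other than $(0,0)$, together with $(0,m-1)$.
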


\begin{proof}
Parameterize
\[
P_0=\{(i,j,k)\in \mathbb Z_m^3:i+j+k\equiv 0\}
\]
by
\[
v(i,k):=(i,-i-k,k),\qquad (i,k)\in \mathbb Z_m^2.
\]
Fix a color $c\in\{0,1,2\}$ and start from $v(i,k)\in P_0$.
Write the first three color-$c$ directions encountered on the layers $S=0,1,2$ as
\[
\omega_c(i,k)=(a_c,b_c,c_c)\in\{0,1,2\}^3.
\]
After those three steps the orbit lies on layer $S=3$.
From there until the next return to $P_0$ all layers are canonical, so the remaining $m-3$ steps are all in direction $c$.
Hence the whole return map is determined by the low-layer word $\omega_c(i,k)$.

If $N_r(\omega)$ denotes the number of occurrences of direction $r$ in a word $\omega$, then
\[
R_0(i,k)=\bigl(i+N_0(\omega_0)-3,\ k+N_2(\omega_0)\bigr),
\]
\[
R_1(i,k)=\bigl(i+N_0(\omega_1),\ k+N_2(\omega_1)\bigr),
\]
\[
R_2(i,k)=\bigl(i+N_0(\omega_2),\ k+N_2(\omega_2)-3\bigr).
\]
So it remains only to determine the low-layer words.

From Definition~\ref{def:routeE}:
\begin{itemize}[leftmargin=2em]
\item on layer $S=1$, the triple is $(1,0,2)$ if $i=0$, and $(2,0,1)$ if $i\ne 0$;
\item on layer $S=2$, the triple is $(2,1,0)$ if $j=0$, and $(0,1,2)$ if $j\ne 0$.
\end{itemize}
Therefore:
\begin{itemize}[leftmargin=2em]
\item for color $0$, the layer-$1$ direction depends on whether the post-step-$1$ $i$-coordinate equals $0$, and the layer-$2$ direction depends on whether the post-step-$2$ $j$-coordinate equals $0$;
\item for color $1$, the directions on layers $1$ and $2$ are always $0$ and $1$;
\item for color $2$, the layer-$1$ direction depends on whether the post-step-$1$ $i$-coordinate equals $0$, and the layer-$2$ direction depends on whether the post-step-$2$ $j$-coordinate equals $0$.
\end{itemize}

\paragraph{Case I: $m\equiv 0,2\pmod 6$.}
In the $(i,k)$-coordinates on $P_0$, the layer-$0$ direction triples become
\begin{align*}
A&:=\{(0,0)\}\cup\{(i,k):i+k\equiv m-1,\ 1\le i\le m-3\}\cup\{(m-1,m-1)\},\\
B&:=\{(0,m-1)\}\cup\{(i,0):1\le i\le m-3\}\cup\{(m-1,1)\},\\
C&:=\{(0,k):1\le k\le m-2\}\cup\{(1,m-1)\},\\
D&:=\{(m-2,1)\},\qquad E:=\{(m-2,0)\},
\end{align*}
corresponding respectively to the triples $102,021,210,012,201$, while the remaining points
\[
F:=\mathbb Z_m^2\setminus (A\cup B\cup C\cup D\cup E)
\]
carry the triple $120$.

\paragraph{Color $1$.}
On layers $1$ and $2$, color $1$ always uses directions $0$ and $1$.
So the low-layer word is always of the form $(a,0,1)$, where $a$ is the layer-$0$ color-$1$ direction.
Thus:
\begin{itemize}[leftmargin=2em]
\item if $a=0$, then $\omega_1=001$ and $R_1(i,k)=(i+2,k)$;
\item if $a=1$, then $\omega_1=101$ and $R_1(i,k)=(i+1,k)$;
\item if $a=2$, then $\omega_1=201$ and $R_1(i,k)=(i+1,k+1)$.
\end{itemize}
Reading off the sets on which the layer-$0$ color-$1$ direction equals $0$, $1$, or $2$ gives exactly the Case~I formula for $R_1$ in Appendix~\ref{app:routeE}.

\paragraph{Color $0$.}
For color $0$, we determine the word $\omega_0=(a_0,b_0,c_0)$ algebraically.
Step $1$ goes in direction $a_0$, producing $i_1=i+\mathbf{1}_{a_0=0}$.
The layer-$1$ rule gives $b_0=1$ if $i_1\equiv 0 \pmod m$, and $2$ otherwise.
Step $2$ goes in direction $b_0$, producing
\[
j_2=j+\mathbf{1}_{a_0=1}+\mathbf{1}_{b_0=1},\qquad j\equiv -i-k \pmod m.
\]
The layer-$2$ rule gives $c_0=2$ if $j_2\equiv 0 \pmod m$, and $0$ otherwise.
Table~\ref{tab:c0_case1} executes this arithmetic trace on each region of $P_0$.
Grouping the start points by the final displacement yields exactly the displayed Case~I formula for $R_0$.

\begin{table}[H]
\centering
\scriptsize
\setlength{\tabcolsep}{4pt}
\renewcommand{\arraystretch}{1.15}
\caption{Derivation of low-layer words $\omega_0$ for color $0$, Case~I ($m\equiv 0,2\pmod 6$).}
\label{tab:c0_case1}
\resizebox{\textwidth}{!}{%
\begin{tabular}{@{}llccccccl@{}}
\toprule
Set & Condition / point & $a_0$ & $i_1=i+\mathbf{1}_{a_0=0}$ & $b_0$ & $j_2=j+\mathbf{1}_{a_0=1}+\mathbf{1}_{b_0=1}$ & $c_0$ & $\omega_0$ & Disp. $R_0(i,k)$ \\
\midrule
$A$ & $(0,0)$ & $1$ & $0\Rightarrow$ & $1$ & $0+1+1=2\not\equiv 0\Rightarrow$ & $0$ & $110$ & $(i-2,k)$ \\
& $A\setminus\{(0,0)\}$ & $1$ & $i\not\equiv 0\Rightarrow$ & $2$ & $j+1+0\not\equiv 0\Rightarrow$ & $0$ & $120$ & $(i-2,k+1)$ \\
\midrule
$B$ & $(m-1,1)$ & $0$ & $0\Rightarrow$ & $1$ & $0+0+1=1\not\equiv 0\Rightarrow$ & $0$ & $010$ & $(i-1,k)$ \\
& $B\setminus\{(m-1,1)\}$ & $0$ & $i+1\not\equiv 0\Rightarrow$ & $2$ & $j+0+0=j\not\equiv 0\Rightarrow$ & $0$ & $020$ & $(i-1,k+1)$ \\
\midrule
$C$ & $(0,1)$ & $2$ & $0\Rightarrow$ & $1$ & $-1+0+1=0\Rightarrow$ & $2$ & $212$ & $(i-3,k+2)$ \\
& $(1,m-1)$ & $2$ & $1\not\equiv 0\Rightarrow$ & $2$ & $0+0+0=0\Rightarrow$ & $2$ & $222$ & $(i-3,k+3)$ \\
& remaining $C$ & $2$ & $0\Rightarrow$ & $1$ & $j+0+1\not\equiv 0\Rightarrow$ & $0$ & $210$ & $(i-2,k+1)$ \\
\midrule
$D$ & $(m-2,1)$ & $0$ & $m-1\not\equiv 0\Rightarrow$ & $2$ & $1+0+0=1\not\equiv 0\Rightarrow$ & $0$ & $020$ & $(i-1,k+1)$ \\
\midrule
$E$ & $(m-2,0)$ & $2$ & $m-2\not\equiv 0\Rightarrow$ & $2$ & $2+0+0=2\not\equiv 0\Rightarrow$ & $0$ & $220$ & $(i-2,k+2)$ \\
\midrule
$F$ & $i+k\equiv 1$ (equiv. $j\equiv -1$) & $1$ & $i\not\equiv 0\Rightarrow$ & $2$ & $-1+1+0=0\Rightarrow$ & $2$ & $122$ & $(i-3,k+2)$ \\
& otherwise & $1$ & $i\not\equiv 0\Rightarrow$ & $2$ & $j+1+0\not\equiv 0\Rightarrow$ & $0$ & $120$ & $(i-2,k+1)$ \\
\bottomrule
\end{tabular}}
\end{table}

\paragraph{Color $2$.}
For color $2$, the layer-$1$ test is again whether the post-step-$1$ $i$-coordinate vanishes, while the layer-$2$ test is whether the post-step-$2$ $j$-coordinate vanishes.
Table~\ref{tab:c2_case1} evaluates these tests on every region of the Case~I partition.
Grouping the resulting low-layer words by displacement yields exactly the uniform formula for $R_2$ displayed in Appendix~\ref{app:routeE}.

\begin{table}[H]
\centering
\scriptsize
\setlength{\tabcolsep}{4pt}
\renewcommand{\arraystretch}{1.15}
\caption{Derivation of low-layer words $\omega_2$ for color $2$, Case~I ($m\equiv 0,2\pmod 6$).}
\label{tab:c2_case1}
\resizebox{\textwidth}{!}{%
\begin{tabular}{@{}llccccccl@{}}
\toprule
Set & Condition / point & $a_2$ & $i_1=i+\mathbf{1}_{a_2=0}$ & $b_2$ & $j_2=j+\mathbf{1}_{a_2=1}+\mathbf{1}_{b_2=1}$ & $c_2$ & $\omega_2$ & Disp. $R_2(i,k)$ \\
\midrule
$A$ & $(0,0)$ & $2$ & $0\Rightarrow$ & $2$ & $0+0+0=0\Rightarrow$ & $0$ & $220$ & $(i+1,k-1)$ \\
& $A\setminus\{(0,0)\}$ & $2$ & $i\not\equiv 0\Rightarrow$ & $1$ & $j+0+1\not\equiv 0\Rightarrow$ & $2$ & $212$ & $(i,k-1)$ \\
\midrule
$B$ & $(0,m-1)$ & $1$ & $0\Rightarrow$ & $2$ & $1+1+0=2\not\equiv 0\Rightarrow$ & $2$ & $122$ & $(i,k-1)$ \\
& $(2,0)$ & $1$ & $2\not\equiv 0\Rightarrow$ & $1$ & $(m-2)+1+1=0\Rightarrow$ & $0$ & $110$ & $(i+1,k-3)$ \\
& $B\setminus\{(0,m-1),(2,0)\}$ & $1$ & $i\not\equiv 0\Rightarrow$ & $1$ & $j+1+1\not\equiv 0\Rightarrow$ & $2$ & $112$ & $(i,k-2)$ \\
\midrule
$C$ & $(0,1)$ & $0$ & $1\not\equiv 0\Rightarrow$ & $1$ & $(m-1)+0+1=0\Rightarrow$ & $0$ & $010$ & $(i+2,k-3)$ \\
& $C\setminus\{(0,1)\}$ & $0$ & $i+1\not\equiv 0\Rightarrow$ & $1$ & $j+0+1\not\equiv 0\Rightarrow$ & $2$ & $012$ & $(i+1,k-2)$ \\
\midrule
$D$ & $(m-2,1)$ & $2$ & $m-2\not\equiv 0\Rightarrow$ & $1$ & $1+0+1=2\not\equiv 0\Rightarrow$ & $2$ & $212$ & $(i,k-1)$ \\
\midrule
$E$ & $(m-2,0)$ & $1$ & $m-2\not\equiv 0\Rightarrow$ & $1$ & $2+1+1\not\equiv 0\Rightarrow$ & $2$ & $112$ & $(i,k-2)$ \\
\midrule
$F$ & $i=m-1$ and $k\notin\{1,m-1\}$ & $0$ & $0\Rightarrow$ & $2$ & $j+0+0\not\equiv 0\Rightarrow$ & $2$ & $022$ & $(i+1,k-1)$ \\
& $i+k\equiv 1$ and $i\neq m-1$ & $0$ & $i+1\not\equiv 0\Rightarrow$ & $1$ & $-1+0+1=0\Rightarrow$ & $0$ & $010$ & $(i+2,k-3)$ \\
& otherwise & $0$ & $i+1\not\equiv 0\Rightarrow$ & $1$ & $j+0+1\not\equiv 0\Rightarrow$ & $2$ & $012$ & $(i+1,k-2)$ \\
\bottomrule
\end{tabular}}
\end{table}

\paragraph{Case II: $m\equiv 4\pmod 6$.}
In the $(i,k)$-coordinates on $P_0$, the layer-$0$ direction triples become
\begin{align*}
A&:=\{(0,0)\}\cup\{(i,k):i+k\equiv m-1,\ 2\le i\le m-3\}\cup\{(m-1,m-1)\},\\
B&:=\{(0,m-1)\}\cup\{(i,0):2\le i\le m-3\}\cup\{(m-1,1)\},\\
C&:=\{(0,k):1\le k\le m-2\}\cup\{(1,k):1\le k\le m-3\}\cup\{(1,m-1)\}\cup\{(2,m-2),(2,m-1)\},\\
D&:=\{(1,m-2),(m-2,1)\},\qquad E:=\{(1,0),(m-2,0)\},
\end{align*}
corresponding again to the triples $102,021,210,012,201$, with $120$ on the remaining points
\[
F:=\mathbb Z_m^2\setminus (A\cup B\cup C\cup D\cup E).
\]

\paragraph{Color $1$.}
Exactly the same low-layer argument applies.
In Case~II the layer-$0$ color-$1$ direction equals $0$ on
\[
(i+k\equiv m-1,\ 2\le i\le m-3)\ \text{or}\ (i,k)\in\{(0,0),(1,0),(m-2,0),(m-1,m-1)\},
\]
it equals $1$ on
\[
(i=0,\ 1\le k\le m-2)\ \text{or}\ (i=1,\ 1\le k\le m-1)\ \text{or}\ (i,k)\in\{(2,m-2),(2,m-1),(m-2,1)\},
\]
and it equals $2$ otherwise.
This is exactly the Case~II formula for $R_1$.

\paragraph{Color $0$.}
Table~\ref{tab:c0_case2} executes the same arithmetic trace for the enlarged Case~II partition.
The additional $Y_{210}$ points contribute exactly the new rows carrying the words $220$ and $222$; after grouping by displacement, one recovers the displayed Case~II formula for $R_0$.

\begin{table}[H]
\centering
\scriptsize
\setlength{\tabcolsep}{4pt}
\renewcommand{\arraystretch}{1.15}
\caption{Derivation of low-layer words $\omega_0$ for color $0$, Case~II ($m\equiv 4\pmod 6$).}
\label{tab:c0_case2}
\resizebox{\textwidth}{!}{%
\begin{tabular}{@{}llccccccl@{}}
\toprule
Set & Condition / point & $a_0$ & $i_1=i+\mathbf{1}_{a_0=0}$ & $b_0$ & $j_2=j+\mathbf{1}_{a_0=1}+\mathbf{1}_{b_0=1}$ & $c_0$ & $\omega_0$ & Disp. $R_0(i,k)$ \\
\midrule
$A$ & $(0,0)$ & $1$ & $0\Rightarrow$ & $1$ & $0+1+1=2\not\equiv 0\Rightarrow$ & $0$ & $110$ & $(i-2,k)$ \\
& $A\setminus\{(0,0)\}$ & $1$ & $i\not\equiv 0\Rightarrow$ & $2$ & $j+1+0\not\equiv 0\Rightarrow$ & $0$ & $120$ & $(i-2,k+1)$ \\
\midrule
$B$ & $(m-1,1)$ & $0$ & $0\Rightarrow$ & $1$ & $0+0+1=1\not\equiv 0\Rightarrow$ & $0$ & $010$ & $(i-1,k)$ \\
& $B\setminus\{(m-1,1)\}$ & $0$ & $i+1\not\equiv 0\Rightarrow$ & $2$ & $j+0+0\not\equiv 0\Rightarrow$ & $0$ & $020$ & $(i-1,k+1)$ \\
\midrule
$C$ & $(0,1)$ & $2$ & $0\Rightarrow$ & $1$ & $(m-1)+0+1=0\Rightarrow$ & $2$ & $212$ & $(i-3,k+2)$ \\
& $(0,k)$ with $2\le k\le m-2$ & $2$ & $0\Rightarrow$ & $1$ & $j+0+1\not\equiv 0\Rightarrow$ & $0$ & $210$ & $(i-2,k+1)$ \\
& $(1,k)$ with $1\le k\le m-2$, or $(i,k)=(2,m-1)$ & $2$ & $i\not\equiv 0\Rightarrow$ & $2$ & $j+0+0\not\equiv 0\Rightarrow$ & $0$ & $220$ & $(i-2,k+2)$ \\
& $(1,m-1)$ or $(2,m-2)$ & $2$ & $i\not\equiv 0\Rightarrow$ & $2$ & $0+0+0=0\Rightarrow$ & $2$ & $222$ & $(i-3,k+3)$ \\
\midrule
$D$ & $(1,m-2)$ or $(m-2,1)$ & $0$ & $i+1\not\equiv 0\Rightarrow$ & $2$ & $j+0+0\not\equiv 0\Rightarrow$ & $0$ & $020$ & $(i-1,k+1)$ \\
\midrule
$E$ & $(1,0)$ or $(m-2,0)$ & $2$ & $i\not\equiv 0\Rightarrow$ & $2$ & $j+0+0\not\equiv 0\Rightarrow$ & $0$ & $220$ & $(i-2,k+2)$ \\
\midrule
$F$ & $i+k\equiv 1$ (equiv. $j\equiv -1$) & $1$ & $i\not\equiv 0\Rightarrow$ & $2$ & $-1+1+0=0\Rightarrow$ & $2$ & $122$ & $(i-3,k+2)$ \\
& otherwise & $1$ & $i\not\equiv 0\Rightarrow$ & $2$ & $j+1+0\not\equiv 0\Rightarrow$ & $0$ & $120$ & $(i-2,k+1)$ \\
\bottomrule
\end{tabular}}
\end{table}

\paragraph{Color $2$.}
Table~\ref{tab:c2_case2} records the corresponding low-layer trace for color $2$ over the Case~II partition.
The only genuinely new row relative to Case~I is the split inside $C$, caused by the extra points of the Case~II repair family; after grouping by displacement one again obtains the uniform formula for $R_2$.

\begin{table}[H]
\centering
\scriptsize
\setlength{\tabcolsep}{4pt}
\renewcommand{\arraystretch}{1.15}
\caption{Derivation of low-layer words $\omega_2$ for color $2$, Case~II ($m\equiv 4\pmod 6$).}
\label{tab:c2_case2}
\resizebox{\textwidth}{!}{%
\begin{tabular}{@{}llccccccl@{}}
\toprule
Set & Condition / point & $a_2$ & $i_1=i+\mathbf{1}_{a_2=0}$ & $b_2$ & $j_2=j+\mathbf{1}_{a_2=1}+\mathbf{1}_{b_2=1}$ & $c_2$ & $\omega_2$ & Disp. $R_2(i,k)$ \\
\midrule
$A$ & $(0,0)$ & $2$ & $0\Rightarrow$ & $2$ & $0+0+0=0\Rightarrow$ & $0$ & $220$ & $(i+1,k-1)$ \\
& $A\setminus\{(0,0)\}$ & $2$ & $i\not\equiv 0\Rightarrow$ & $1$ & $j+0+1\not\equiv 0\Rightarrow$ & $2$ & $212$ & $(i,k-1)$ \\
\midrule
$B$ & $(0,m-1)$ & $1$ & $0\Rightarrow$ & $2$ & $1+1+0=2\not\equiv 0\Rightarrow$ & $2$ & $122$ & $(i,k-1)$ \\
& $(2,0)$ & $1$ & $2\not\equiv 0\Rightarrow$ & $1$ & $(m-2)+1+1=0\Rightarrow$ & $0$ & $110$ & $(i+1,k-3)$ \\
& $B\setminus\{(0,m-1),(2,0)\}$ & $1$ & $i\not\equiv 0\Rightarrow$ & $1$ & $j+1+1\not\equiv 0\Rightarrow$ & $2$ & $112$ & $(i,k-2)$ \\
\midrule
$C$ & $(0,1)$ or $(2,m-1)$ & $0$ & $i+1\not\equiv 0\Rightarrow$ & $1$ & $(m-1)+0+1=0\Rightarrow$ & $0$ & $010$ & $(i+2,k-3)$ \\
& remaining $C$ & $0$ & $i+1\not\equiv 0\Rightarrow$ & $1$ & $j+0+1\not\equiv 0\Rightarrow$ & $2$ & $012$ & $(i+1,k-2)$ \\
\midrule
$D$ & $(1,m-2)$ or $(m-2,1)$ & $2$ & $i\not\equiv 0\Rightarrow$ & $1$ & $j+0+1\not\equiv 0\Rightarrow$ & $2$ & $212$ & $(i,k-1)$ \\
\midrule
$E$ & $(1,0)$ or $(m-2,0)$ & $1$ & $i\not\equiv 0\Rightarrow$ & $1$ & $j+1+1\not\equiv 0\Rightarrow$ & $2$ & $112$ & $(i,k-2)$ \\
\midrule
$F$ & $i=m-1$ and $k\notin\{1,m-1\}$ & $0$ & $0\Rightarrow$ & $2$ & $j+0+0\not\equiv 0\Rightarrow$ & $2$ & $022$ & $(i+1,k-1)$ \\
& $i+k\equiv 1$ and $i\neq m-1$ & $0$ & $i+1\not\equiv 0\Rightarrow$ & $1$ & $-1+0+1=0\Rightarrow$ & $0$ & $010$ & $(i+2,k-3)$ \\
& otherwise & $0$ & $i+1\not\equiv 0\Rightarrow$ & $1$ & $j+0+1\not\equiv 0\Rightarrow$ & $2$ & $012$ & $(i+1,k-2)$ \\
\bottomrule
\end{tabular}}
\end{table}

This proves all return-map formulas.
\end{proof}

\section{Hamiltonicity of the Route~E return maps}\label{app:routeE-cycle}

\subsection{Setup}

The first-return framework of Section~\ref{sec:even} applies uniformly to all three colors.
Appendix~\ref{app:routeE} supplies the closed-form return maps, and this appendix uses those formulas to recover Hamiltonicity.
The main text already proves color~$2$ and isolates the conceptual obstruction/repair story for Case~II, so only colors~$1$ and~$0$ remain here.

For quick navigation, Table~\ref{tab:appendix-cycle-guide} records the proposition/corollary chain for each color and congruence class.
The working coordinates and defect geometries for each color are summarized in Table~\ref{tab:routeE-summary}; later, Proposition~\ref{prop:routeE-splice} records the resulting arithmetic family-block decompositions and splice permutations.

\begin{table}[H]
\centering
\footnotesize
\renewcommand{\arraystretch}{1.15}
\begin{tabularx}{\textwidth}{@{}c c >{\raggedright\arraybackslash}X >{\raggedright\arraybackslash}X@{}}
\toprule
color & congruence class & first-return / lane-map input & concluding cycle statement \\
\midrule
$2$ & all even $m\ge 6$ & Proposition~\ref{prop:R2-main-data} (main text) & Corollary~\ref{cor:R2-main} \\
$1$ & $m\equiv 0,2\pmod 6$ & Proposition~\ref{prop:R1-caseI} together with Proposition~\ref{prop:routeE-splice} & Corollaries~\ref{cor:R1-caseI} and \ref{cor:R1-all} \\
$1$ & $m\equiv 4\pmod 6$ & Proposition~\ref{prop:R1-caseII} together with Proposition~\ref{prop:routeE-splice} & Corollaries~\ref{cor:R1-caseII} and \ref{cor:R1-all} \\
$0$ & $m\equiv 0,2\pmod 6$ & Proposition~\ref{prop:R0-caseI-data} together with Proposition~\ref{prop:routeE-splice} & Corollaries~\ref{cor:R0-caseI} and \ref{cor:R0-all} \\
$0$ & $m\equiv 4\pmod 6$ & Proposition~\ref{prop:R0-caseII-data} together with Proposition~\ref{prop:routeE-splice} & Corollaries~\ref{cor:R0-caseII} and \ref{cor:R0-all} \\
\bottomrule
\end{tabularx}
\caption{Navigation guide for Appendix~\ref{app:routeE-cycle}.}
\label{tab:appendix-cycle-guide}
\end{table}

The next lemma packages the cycle step that remains after the closed-form formulas for the first-return maps have been derived.

\begin{lemma}[splice graph lemma]\label{lem:splice-graph}
Let $X$ be a finite set partitioned into nonempty ordered blocks
\[
X=F_1\sqcup F_2\sqcup \cdots \sqcup F_r,
\qquad
F_j=(x_{j,0},x_{j,1},\dots,x_{j,\ell_j}).
\]
Assume $T:X\to X$ is such that for some permutation $\pi\in S_r$,
\[
T(x_{j,s})=x_{j,s+1}\qquad (0\le s<\ell_j),
\]
and
\[
T(x_{j,\ell_j})=x_{\pi(j),0}\qquad (1\le j\le r).
\]
Then the cycle decomposition of $T$ is obtained from the cycle decomposition of $\pi$: if
\[
(j_1\,j_2\,\dots\,j_q)
\]
is a cycle of $\pi$, then the concatenation
\[
(F_{j_1}\mid F_{j_2}\mid \cdots \mid F_{j_q})
\]
is a single $T$-cycle.
In particular, $T$ is a single $|X|$-cycle if and only if $\pi$ is a single $r$-cycle.
\end{lemma}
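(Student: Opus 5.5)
The plan is to follow the $T$-orbit of a block's first element and show that it traverses whole blocks in the order prescribed by $\pi$. First I note that $T$ is a bijection of $X$. Each element $x_{j,s}$ with $s\ge 1$ has the unique preimage $x_{j,s-1}$. Each first element $x_{j,0}$ has the unique preimage $x_{\pi^{-1}(j),\ell_{\pi^{-1}(j)}}$, because $\pi$ is a permutation. Since the blocks partition $X$, every element of $X$ is hit exactly once. Consequently the orbits of $T$ partition $X$ into cycles, and it is enough to identify the cycle through each block start.

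Next, fix a cycle $(j_1\,j_2\,\dots\,j_q)$ of $\pi$ and start at $x_{j_1,0}$. A short induction on $s$, using the within-block rule $T(x_{j,s})=x_{j,s+1}$, gives $T^{s}(x_{j_1,0})=x_{j_1,s}$ for $0\le s\le \ell_{j_1}$. One further step applies the exit rule and lands on $x_{\pi(j_1),0}=x_{j_2,0}$ after exactly $|F_{j_1}|=\ell_{j_1}+1$ steps. Repeating this block by block, the orbit lists $F_{j_1},F_{j_2},\dots,F_{j_q}$ in order, each in its internal order. It returns to $x_{j_1,0}$ for the first time after $\sum_{p=1}^q |F_{j_p}|$ steps, since $\pi(j_q)=j_1$.

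The return is genuinely first because the listed elements are pairwise distinct: they lie in distinct blocks $F_{j_p}$ (the $j_p$ are distinct), and within a block the entries are distinct. So the concatenation $(F_{j_1}\mid\cdots\mid F_{j_q})$ is a single $T$-cycle. Every element of $X$ lies in some block, and every block index lies in exactly one $\pi$-cycle, so these concatenations are all the $T$-cycles. This gives a bijection between the cycles of $\pi$ and those of $T$.

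The final equivalence then follows: $T$ has exactly one cycle if and only if $\pi$ does, and a single cycle of $T$ must contain all $|X|$ elements. I see no real obstacle here. The only point needing care is index bookkeeping in the degenerate case $\ell_j=0$ (a one-element block), where the within-block rule is vacuous and the exit rule acts immediately. The induction above covers this case without change.
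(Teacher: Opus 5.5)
Your proposal is correct and is essentially the paper's argument. You trace the $T$-orbit through each block, jump to the start of $F_{\pi(j)}$ at the terminal point, and read off one $T$-cycle on the concatenation $(F_{j_1}\mid\cdots\mid F_{j_q})$ for each cycle of $\pi$; you also spell out the bijectivity of $T$ and the distinctness that makes the return genuinely first, which the paper leaves implicit.
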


\begin{proof}
Inside each block $F_j$, the map $T$ moves one step forward until it reaches the terminal point $x_{j,\ell_j}$, and then it jumps to the initial point of $F_{\pi(j)}$.
Hence on the union of the blocks belonging to a cycle
\[
(j_1\,j_2\,\dots\,j_q)
\]
of $\pi$, the map $T$ is exactly the successor map on the concatenation
\[
(F_{j_1}\mid F_{j_2}\mid \cdots \mid F_{j_q}).
\]
Different cycles of $\pi$ give disjoint $T$-cycles, so the claim follows.
\end{proof}

In the sequel, the relevant blocks are the ordered arithmetic family-blocks obtained from the long bulk arithmetic runs by absorbing the $O(1)$ bridge vertices created by the defect set into adjacent runs.
Once the first-return formulas for $T_c$ and $\rho_c$ are known, the bulk rule determines the successor relation inside each block, while the defect set determines only the finite splice permutation on the block labels.
This resolves the single-cycle question for the induced lane maps $T_c$; Lemma~\ref{lem:counting} is then used separately to upgrade from $T_c$ being a single $m$-cycle to $R_c$ being a single $m^2$-cycle.
When one of the displayed arithmetic runs is empty for the smallest admissible values of $m$, it is omitted from the corresponding block decomposition.

\subsection{Color \texorpdfstring{$2$}{2}}

The color-$2$ return map is proved in full in the main text (Section~\ref{sec:even-r2}).
Proposition~\ref{prop:R2-main-data} derives the first-return data, and Corollary~\ref{cor:R2-main} establishes that $R_2$ is a single $m^2$-cycle for all even $m\ge 6$.
We do not repeat those orbit traces here.

\subsection{Color \texorpdfstring{$1$}{1}}

By Lemma~\ref{lem:routeE-bulkcoords}, the bulk coordinates for color~$1$ are $\Phi_1(i,k)=(u,t)=(i-k,\,k)$, in which the generic branch is $(u,t)\mapsto(u,t+1)$.
By Lemma~\ref{lem:routeE-finite-defect}\textup{(i)}, the defect set lies on the affine lines $u+t=0$ and $u+2t=m-1$ (and, in Case~II, also $u+t=1$), together with finitely many isolated points.
Following the same pattern, it suffices to list the defect components met before first return and sum the lane increments.

We use a first-return analysis on the transversal
\[
L_1=\{(x,0):x\in \Zm\}\subseteq P_0.
\]

Let $\rho_1(x)$ be the first return time of $R_1$ from $(x,0)$ to $L_1$, and define
\[
R_1^{\rho_1(x)}(x,0)=(T_1(x),0).
\]

In the original $(i,k)$-coordinates, $u=i-k$ is the lane coordinate.
A bulk move $(i,k)\mapsto(i+1,k+1)$ preserves $u$.
The two defect branches act as stalls: $(i,k)\mapsto(i+1,k)$ increases $u$ by $1$ (defect line $u+t=0$, i.e.\ $i=0$), and $(i,k)\mapsto(i+2,k)$ increases $u$ by $2$ (defect line $u+2t=m-1$, i.e.\ diagonal $i+k=m-1$).
Starting from $(x,0)$, the net change in $i$ at first return is exactly the total lane increment contributed by the stalls.

\subsubsection{Case I: \texorpdfstring{$m\equiv 0,2\pmod 6$}{m = 0,2 (mod 6)}}

\begin{proposition}\label{prop:R1-caseI}
Assume $m\equiv 0,2\pmod 6$.
Then the first-return map and return times on $L_1$ are
\[
T_1(x)=
\begin{cases}
2,&x=0,\\
x+3,&1\le x\le m-4,\\
1,&x=m-3,\\
0,&x=m-2,\\
3,&x=m-1,
\end{cases}
\qquad
\rho_1(x)=
\begin{cases}
1,&x=0\ \text{or}\ x=m-2,\\
m+2,&1\le x\le m-4,\\
m+3,&x=m-3\ \text{or}\ x=m-1.
\end{cases}
\]
\end{proposition}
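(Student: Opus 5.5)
The plan is to run the itinerary reduction of Lemma~\ref{lem:routeE-itinerary} directly on the Case~I formula for $R_1$ in Appendix~\ref{app:routeE}. I work in the original $(i,k)$ coordinates, where the lane is $u=i-k$ and the transversal $L_1$ is $\{k=0\}$. Starting from $(x,0)$, I track three kinds of move:
\begin{itemize}
\item the bulk move $(i,k)\mapsto(i+1,k+1)$, which preserves $u$ and advances $k$ by $1$;
\item the $+1$ stalls $(i,k)\mapsto(i+1,k)$, which occur on $i=0$ with $1\le k\le m-2$, and at $(1,m-1)$ and $(m-2,1)$;
\item the $+2$ stalls $(i,k)\mapsto(i+2,k)$, which occur on the diagonal $i+k\equiv m-1$ with $1\le i\le m-3$, and at $(0,0)$, $(m-2,0)$ and $(m-1,m-1)$.
\end{itemize}

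A key bookkeeping observation gives the return times almost for free. Stalls never change $k$, and every bulk step raises $k$ by exactly $1$. So an orbit that leaves $(x,0)$ by a bulk step returns to $k=0$ after exactly $m$ bulk steps. Hence $\rho_1(x)=m+(\text{number of stalls})$, and $T_1(x)-x$ is the sum of the stall increments. The two immediate cases are settled at once: $(0,0)$ and $(m-2,0)$ are isolated $+2$ stalls landing on $(2,0)$ and $(0,0)$. This gives $\rho_1=1$ and $T_1(0)=2$, $T_1(m-2)=0$.

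For the generic lanes $1\le x\le m-4$, I would show that the orbit meets exactly one $+1$ stall and one $+2$ stall, in an order that depends on the parity of $x$.
\begin{itemize}
\item If $x$ is odd, the bulk ray $(x+s,s)$ first hits the diagonal at $s=(m-1-x)/2$ and jumps by $+2$. It then runs on lane $x+2$ until $i=0$, giving a $+1$ stall. It then climbs back to $k=0$ without meeting the diagonal again, because the lane parity has become even.
\item If $x$ is even, the ray misses the diagonal by parity and first hits $i=0$ at height $k=m-x$, giving a $+1$ stall. The shifted ray then meets the diagonal at $s\equiv (x-2)/2$, giving a $+2$ stall.
\end{itemize}
In both cases $T_1(x)=x+3$ and $\rho_1(x)=m+2$. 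For each case I must check that the hit heights lie in the admissible ranges $1\le k\le m-2$ and $1\le i\le m-3$, and that none of the isolated points is met along the way. I expect this range check to be the main technical obstacle, together with the boundary lanes $x=m-3$ and $x=m-1$. Near $k\in\{0,1,m-1\}$ and $i\in\{m-2,m-1\}$ the isolated exceptions $(1,m-1)$, $(m-2,1)$ and $(m-1,m-1)$ replace the line stalls.

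For $x=m-3$ and $x=m-1$ I would write out the explicit traces and expect three stalls each. For $x=m-1$, the orbit goes $(m-1,0)\to(0,1)$, then takes a $+1$ stall at $i=0$, and so on. The increments should total $4$ and $4$ modulo $m$, landing on $1$ and $3$ respectively, with $\rho_1=m+3$.

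As a consistency check I would verify the total return time:
\[
\sum_x\rho_1(x)=2+(m-4)(m+2)+2(m+3)=m^2,
\]
which matches what Lemma~\ref{lem:counting} will later need.
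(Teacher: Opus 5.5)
Your proposal is correct and follows essentially the same route as the paper. It uses the same three-way classification of moves read off the Case~I formula for $R_1$, and the same parity split for $1\le x\le m-4$: for odd $x$, the diagonal at height $(m-1-x)/2$ followed by the vertical stall at $(0,m-x-2)$; for even $x$, the vertical stall at height $m-x$ followed by the diagonal at height $m-\tfrac x2-1$. It also uses explicit traces for $x=m-3$ and $x=m-1$, and your observation $\rho_1=m+\#\{\text{stalls}\}$ makes explicit what the paper uses tacitly.

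The boundary traces you defer work out as you predict. For $x=m-3$, the stalls are at $(m-2,1)$, $(0,2)$ and $(\tfrac m2-1,\tfrac m2)$, and the orbit then passes the non-defect point $(0,m-1)$ on its way to $(1,0)$. For $x=m-1$, the stalls are at $(0,1)$, $(m-1,m-1)$ and $(1,m-1)$.
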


\begin{proof}
The closed-form map $R_1$ has three types of moves:
\begin{itemize}[leftmargin=2em]
\item bulk $(i,k)\mapsto(i+1,k+1)$,
\item $+1$ stalls on $i=0$ together with $(1,m-1)$ and $(m-2,1)$,
\item $+2$ stalls on the diagonal $i+k=m-1$ together with $(0,0)$, $(m-2,0)$ and $(m-1,m-1)$.
\end{itemize}

The immediate cases are read directly from the formula:
\[
(0,0)\mapsto(2,0),\qquad (m-2,0)\mapsto(0,0).
\]
Hence
\[
T_1(0)=2,\quad \rho_1(0)=1,\qquad T_1(m-2)=0,\quad \rho_1(m-2)=1.
\]

Now fix $1\le x\le m-4$.
In both the odd and even sub-cases, the orbit from $(x,0)$ encounters exactly two stalls before first return---one diagonal ($+2$) and one vertical ($+1$), in either order---with total lane increment $3$ and return time $m+2$.

\smallskip
\noindent\textbf{Odd $x$:}\enspace
The lane $u=x$ meets the diagonal $i+k=m-1$ at height $(m{-}1{-}x)/2$, producing a $+2$ stall.
The orbit moves to lane $x+2$, then hits the vertical defect at $(0,m{-}x{-}2)$ for a $+1$ stall.
On the resulting lane $x+3$ the remaining defects lie behind the current clock value.

\smallskip
\noindent\textbf{Even $x$:}\enspace
The lane $u=x$ has no diagonal hit; its first defect is $(0,m{-}x)$ (a $+1$ stall, moving to odd lane $x+1$).
That lane then meets the diagonal at height $m-x/2-1$ for a $+2$ stall.
Again the resulting lane $x+3$ has no further defects before wrap.

\smallskip\noindent
In both cases,
\[
T_1(x)=x+3,\qquad \rho_1(x)=m+2.
\]

It remains to treat the two boundary values.

\smallskip
\noindent\textbf{The point $x=m-3$.}
The orbit begins
\[
(m-3,0)\to (m-2,1)\to (m-1,1)\to (0,2)\to (1,2).
\]
Thus it first gets a $+1$ stall at $(m-2,1)$ and then another $+1$ stall at $(0,2)$.
At that point the orbit lies on the lane $u=i-k\equiv m-1$.
The unique diagonal defect on this lane before $k$ wraps is
\[
\left(\frac m2-1,\frac m2\right),
\]
so the orbit continues
\[
(1,2)\xrightarrow{\bulk^{\frac m2-2}}\left(\frac m2-1,\frac m2\right)
\xrightarrow{+2}\left(\frac m2+1,\frac m2\right)
\xrightarrow{\bulk^{\frac m2}}(1,0).
\]
Hence the third stall is this $+2$ diagonal stall, there are exactly three stalls in total, and therefore
\[
T_1(m-3)=1,\qquad \rho_1(m-3)=m+3.
\]

\smallskip
\noindent\textbf{The point $x=m-1$.}
The orbit begins
\[
(m-1,0)\to (0,1)\to (1,1),
\]
so it gets a $+1$ stall at $(0,1)$.
Later it reaches $(m-1,m-1)$, giving a $+2$ stall, and then $(1,m-1)$, giving the final $+1$ stall.
At that point the orbit is on lane $3$ with current clock value $k=m-1$.
The vertical defect on lane $3$ is at height $m-3$, while a diagonal defect on lane $3$ would require
\[
k=\frac{m-4}{2},
\]
so both possible defect heights lie strictly behind the current value $m-1$.
Hence there are no more defects before return.
Therefore the total lane increment is $4$ and the number of stalls is $3$, so
\[
T_1(m-1)=3,\qquad \rho_1(m-1)=m+3.
\]

This proves the stated formulas.
\end{proof}

\subsubsection{Case II: \texorpdfstring{$m\equiv 4\pmod 6$}{m = 4 (mod 6)}}

\begin{proposition}\label{prop:R1-caseII}
Assume $m\equiv 4\pmod 6$.
Then the first-return map and return times on $L_1$ are
\[
T_1(x)=
\begin{cases}
2,&x=0,\\
3,&x=1,\\
5,&x=2,\\
x+2,&x\text{ even},\ x\notin\{0,2,m-2\},\\
0,&x=m-2,\\
x+6,&x\text{ odd},\ x\notin\{1,m-3,m-1\},\\
4,&x=m-3,\\
7,&x=m-1,
\end{cases}
\]
and
\[
\rho_1(x)=
\begin{cases}
1,&x\in\{0,1,m-2\},\\
m+3,&x=2,\\
m+2,&x\text{ even},\ x\notin\{0,2,m-2\},\\
m+4,&x\text{ odd},\ x\notin\{1,m-3,m-1\},\\
m+6,&x\in\{m-3,m-1\}.
\end{cases}
\]
\end{proposition}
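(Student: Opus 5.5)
Since the statement concerns the closed-form Case~II map $R_1$ from Appendix~\ref{app:routeE}, I will run the same stall-itinerary computation as in Proposition~\ref{prop:R1-caseI}, now using Lemma~\ref{lem:routeE-itinerary}. In the $(i,k)$-coordinates, with lane $u=i-k$, the Case~II map $R_1$ has three kinds of moves. The bulk move is $(i,k)\mapsto(i+1,k+1)$. There are $+1$ stalls on the two vertical lines $i=0$ (for $1\le k\le m-2$) and $i=1$ (for $1\le k\le m-1$), together with the points $(2,m-2),(2,m-1),(m-2,1)$. There are $+2$ stalls on the diagonal $i+k\equiv m-1$ (for $2\le i\le m-3$), together with the points $(0,0),(1,0),(m-2,0),(m-1,m-1)$.

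From a start $(x,0)$ I will list the stalls met before $k$ wraps to $0$. The first return occurs when $k$ returns to $0$, so $\rho_1(x)=m+(\text{number of stalls})$, except in the immediate cases where a stall starting at height $k=0$ returns at once. The lane increment is the sum of the stall sizes. The immediate cases come first: $(0,0)\mapsto(2,0)$, $(1,0)\mapsto(3,0)$ and $(m-2,0)\mapsto(0,0)$ are all single stalls of return time $1$. This gives the three entries with $\rho_1=1$.

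For the interior lanes I will track the height at which the orbit meets each defect. On lane $u$ at clock $k$ we have $i=u+k$. The line $i=0$ is hit at $k\equiv -u$, the line $i=1$ at $k\equiv 1-u$, and the diagonal at $2k\equiv m-1-u$, which is solvable only for odd $u$ with $k\in\{(m-1-u)/2,\ m-1-(u)/2\ldots\}$. I will fix the correct representative by the range constraint $2\le i\le m-3$.

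\textbf{Even $x$ (generic).} The first defect is the $i=0$ line at height $m-x$, which gives $+1$ onto lane $x+1$. The $i=1$ line on lane $x+1$ sits at the same height, so there is an immediate second $+1$ onto lane $x+2$. The diagonal on the odd intermediate lane is met before this only if its height is smaller, so I must check that it is not. On the even lane $x+2$ no diagonal exists and the vertical lines lie behind. The result is two stalls, total $+2$, and $\rho_1=m+2$.

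\textbf{Odd $x$ (generic).} The diagonal comes first at height $(m-1-x)/2$, giving lane $x+2$. Next come the two verticals at height $m-x-2$, giving two consecutive $+1$ moves onto lane $x+4$ (odd). Then a second diagonal hit on lane $x+4$ at height $m-1-(x+4)/2+\ldots$ must be checked to lie ahead, giving a final $+2$. The totals are $+6$ and $\rho_1=m+4$, which matches Example~\ref{ex:m10-color1}.

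The boundary lanes $x=2$, $m-3$ and $m-1$ will be traced explicitly, like the $m-3$ and $m-1$ cases in Proposition~\ref{prop:R1-caseI}. Their orbits run through the isolated points $(2,m-2)$, $(2,m-1)$, $(m-1,m-1)$, $(m-2,1)$ and the truncated ends of the diagonal ($i=2$ or $i=m-3$). I expect these to yield respectively $3$ stalls (total $+3$), $6$ stalls (total $+7$, giving $4$ modulo $m$) and $6$ stalls (total $+8$).

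The main obstacle is the height-ordering bookkeeping. For each lane class I must verify that the listed stalls occur in that order and that no further defect is hit before $k$ wraps. This means checking, after each stall, that the remaining defect heights on the new lane are either ahead of the current clock in the claimed order or strictly behind it. For odd $x$ this involves the second diagonal hit, whose height must be placed correctly relative to $m-1$; this is where $m\equiv 4\pmod 6$ and the endpoint ranges $2\le i\le m-3$ enter. I would first handle these comparisons uniformly as inequalities in $x$, and then isolate the values of $x$ near $m-1$ where the comparisons fail. Those should be exactly the exceptional lanes listed in the statement.
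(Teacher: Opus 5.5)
Your proposal is correct and follows the paper's proof essentially step for step. It uses the same lane-and-stall bookkeeping with $u=i-k$ and the same case split: immediate lanes $0,1,m-2$; lane $2$ with three consecutive $+1$ stalls; generic even lanes with the two stacked $+1$ stalls at height $m-x$; generic odd lanes with the $+2,+1,+1,+2$ pattern; and explicit traces for lanes $m-3$ and $m-1$. Your predicted stall counts also match the paper's orbits.

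One caution for the write-up: your uniform height comparison also fails at the non-exceptional lane $x=m-5$, so the failures are not exactly the exceptional lanes as you expect. There the orbit ends on lane $1$ at height $m/2$ with the point $(0,m-1)$ still ahead. The formula survives only because $(0,m-1)$ lies in none of the exceptional branches, which the paper checks explicitly. Also, $m\equiv 4\pmod 6$ plays no role in any of these height comparisons.
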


\begin{proof}
In Case II the closed-form map has
\begin{itemize}[leftmargin=2em]
\item bulk $(i,k)\mapsto(i+1,k+1)$,
\item $+1$ stalls on the lines $i=0$ and $i=1$, together with the special points
$(2,m-2)$, $(2,m-1)$, $(m-2,1)$,
\item $+2$ stalls on the diagonal $i+k=m-1$ for $2\le i\le m-3$, together with the special points
$(0,0)$, $(1,0)$, $(m-2,0)$, $(m-1,m-1)$.
\end{itemize}

The immediate cases are read directly from the formula:
\[
(0,0)\mapsto(2,0),\qquad (1,0)\mapsto(3,0),\qquad (m-2,0)\mapsto(0,0),
\]
so
\[
T_1(0)=2,\quad T_1(1)=3,\quad T_1(m-2)=0,
\]
and all three return times equal $1$.

For $x=2$, the lane $u=2$ has no diagonal defect.
The orbit runs generically to $(0,m-2)$, then encounters three consecutive $+1$ stalls:
\[
(2,0)\xrightarrow{\bulk^{m-2}}(0,m-2)\xrightarrow{+1}(1,m-2)\xrightarrow{+1}(2,m-2)\xrightarrow{+1}(3,m-2).
\]
From there two bulk moves return to $L_1$.
Hence
\[
T_1(2)=5,\qquad \rho_1(2)=m+3.
\]

Now let $x$ be even with $4\le x\le m-4$.
The lane $u=x$ has no diagonal defect; its first defect is $(0,m-x)$, followed immediately by the $i=1$ defect.
Because $x\neq 2$, these are the only two stalls, giving lane increment $2$ and
\[
T_1(x)=x+2,\qquad \rho_1(x)=m+2.
\]

Next let $x$ be odd with $3\le x\le m-5$.
The orbit encounters four stalls in the pattern $+2,+1,+1,+2$ (one diagonal, two vertical, one diagonal), with total lane increment $6$ and return time $m+4$.
For the endpoint $x=m-5$, the same four-stall pattern applies; after the second diagonal stall the orbit lies on lane $1$ at height $m/2$, and the point $(0,m-1)$ is not a defect (it is excluded from both the $i=0$ and diagonal branches), so the orbit returns to $L_1$ normally.
Hence for all odd $x\in\{3,\ldots,m-5\}$,
\[
T_1(x)=x+6,\qquad \rho_1(x)=m+4.
\]

For $x=m-3$, the orbit is
\[
\begin{aligned}
(m-3,0)&\xrightarrow{\bulk}(m-2,1)\xrightarrow{+1}(m-1,1)\xrightarrow{\bulk}(0,2)\xrightarrow{+1}(1,2)\\
&\xrightarrow{+1}(2,2)\xrightarrow{\bulk^{m-3}}(m-1,m-1)\xrightarrow{+2}(1,m-1)\\
&\xrightarrow{+1}(2,m-1)\xrightarrow{+1}(3,m-1)\xrightarrow{\bulk}(4,0).
\end{aligned}
\]
So the lane increment is $1+1+1+2+1+1=7$ and the return time is $m+6$.
Hence
\[
T_1(m-3)=4,\qquad \rho_1(m-3)=m+6.
\]

For $x=m-1$, the orbit is
\[
\begin{aligned}
(m-1,0)&\xrightarrow{\bulk}(0,1)\xrightarrow{+1}(1,1)\xrightarrow{+1}(2,1)
\xrightarrow{\bulk^{\frac m2-2}}\left(\frac m2,\frac m2-1\right)\\
&\xrightarrow{+2}\xrightarrow{\bulk^{\frac m2-2}}(0,m-3)\xrightarrow{+1}(1,m-3)
\xrightarrow{+1}(2,m-3)\\
&\xrightarrow{+2}(4,m-3)\xrightarrow{\bulk^3}(7,0).
\end{aligned}
\]
Thus the orbit has four $+1$ stalls and two $+2$ stalls, for total lane increment $8$ and return time
$m+6$.
Therefore
\[
T_1(m-1)=7,\qquad \rho_1(m-1)=m+6.
\]
\end{proof}

\subsection{Color \texorpdfstring{$0$}{0}}

By Lemma~\ref{lem:routeE-bulkcoords}, the bulk coordinates for color~$0$ are $\Phi_0(i,k)=(u,t)=(i+2k,\,k)$, in which the generic branch is $(u,t)\mapsto(u,t+1)$.
By Lemma~\ref{lem:routeE-finite-defect}\textup{(iii)}, the defect set lies on the lines $t=0$ and $u=t+1$ (and, in Case~II, also $u=1+2t$), together with finitely many isolated points.
Since the bulk frame already uses the convenient coordinates $(x,y)=(u,t)=(i+2k,\,k)$, we work directly in this frame.

The transversal is
\[
L_0=\{(x,0):x\in \Zm\}.
\]
The generic move freezes $x$ and increments $y$, so $y$ is literally a clock while the exceptional branches change only the lane coordinate $x$.

\subsubsection{Case I: \texorpdfstring{$m\equiv 0,2\pmod 6$}{m = 0,2 (mod 6)}}

\begin{lemma}\label{lem:R0-caseI-xy}
Assume $m\equiv 0,2\pmod 6$.
In the coordinates $(x,y)=(i+2k,k)$, the return map $R_0$ becomes
\[
R_0(x,y)=
\begin{cases}
(m-2,0),&(x,y)=(0,0),\\
(2,2),&(x,y)=(m-1,m-1),\\
(0,2),&(x,y)=(m-2,0),\\
(0,1),&(x,y)=(1,1),\\
(x+1,y+2),&x=y+1,\ (x,y)\neq (1,0),\\
(x+1,y+1),&\bigl(y=0,\ 1\le x\le m-3\bigr)\ \text{or}\ (x,y)\in\{(0,1),(m-2,m-1)\},\\
(x,y+1),&\text{otherwise}.
\end{cases}
\]
All arithmetic is modulo $m$.
\end{lemma}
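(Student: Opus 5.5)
The plan is a direct change of coordinates applied branch by branch to the Case~I formula for $R_0$ in Appendix~\ref{app:routeE}, which Proposition~\ref{prop:routeE-return-formulas} justifies. Set $\Phi_0(i,k)=(x,y)=(i+2k,k)$, with inverse $(i,k)=(x-2y,y)$. A displacement $(\delta_i,\delta_k)$ in the original coordinates then becomes $(\delta_i+2\delta_k,\delta_k)$ in the new ones. The conjugated map is $\Phi_0\circ R_0\circ\Phi_0^{-1}$, so for each branch I will translate both the displacement and the region where that branch applies.

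The displacements are a straightforward computation, and they agree with the proof of Lemma~\ref{lem:routeE-finite-defect}(iii). The generic branch $(i-2,k+1)$ becomes $(x,y+1)$. The branch $(i-1,k+1)$ becomes $(x+1,y+1)$, and $(i-3,k+2)$ becomes $(x+1,y+2)$. The four isolated branches $(i-2,k)$, $(i-3,k+3)$, $(i-2,k+2)$ and $(i-1,k)$ become $(x-2,y)$, $(x+3,y+3)$, $(x+2,y+2)$ and $(x-1,y)$ respectively.

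Next I translate the supports.
\begin{enumerate}[label=\textup{(\roman*)},leftmargin=2.5em]
\item The condition $i+k\equiv1$ reads $x-y\equiv1$, that is, $x=y+1$. The excluded point $(i,k)=(1,0)$ becomes $(x,y)=(1,0)$.
\item The condition $k=0$ becomes $y=0$, and on this line $x=i$, so the range $1\le i\le m-3$ carries over unchanged.
\item The special points $(0,m-1)$ and $(m-2,1)$ map to $(m-2,m-1)$ and $(0,1)$.
\item The four isolated points $(0,0)$, $(1,m-1)$, $(m-2,0)$ and $(m-1,1)$ map to $(0,0)$, $(m-1,m-1)$, $(m-2,0)$ and $(1,1)$.
\end{enumerate}
Applying the translated displacements at these four points gives the images $(m-2,0)$, $(2,2)$, $(0,2)$ and $(0,1)$, all taken mod $m$. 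These match the displayed absolute values.

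The main obstacle is confirming that the order of the cases and the precedence of the exceptions are respected after the transformation. In particular, two of the isolated points lie on the line $x=y+1$: $(x,y)=(1,1)$ satisfies $x=y+1$, and so does $(0,m-1)$, whose original is $(m-2,1)$, since $0\equiv m-1+1$. In the original formula these points were governed by their own earlier branches. I will check each one against the original partition.
\begin{itemize}[leftmargin=2em]
\item The point $(i,k)=(m-1,1)$ has $i+k\equiv0$, so it never lay on $i+k\equiv1$; its branch is $(i-1,k)$. In the new coordinates it sits at $(1,1)$, which the displayed formula lists before the diagonal case.
\item The point $(m-2,1)$ has $i+k\equiv m-1$, not $1$, so it is off the diagonal in original terms. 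It maps to $(0,1)$, not to $(0,m-1)$, so no conflict arises.
\item The point $(i,k)=(1,m-1)$ has $i+k\equiv0$. It maps to $(x,y)=(m-1,m-1)$, which does not satisfy $x=y+1$.
\end{itemize}
Once each listed point has been checked for membership in exactly one branch, using the ``first matching case'' convention as in the original formula, the displayed piecewise formula follows.
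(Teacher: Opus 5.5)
Your proposal is correct and follows the paper's proof, which substitutes $i=x-2y$, $k=y$ into the Case~I formula and transforms each branch's displacement and support; your displacements, supports, and the four isolated images all check out. One correction to your ``main obstacle'' paragraph: $(1,1)$ does not lie on $x=y+1$, and $(x,y)=(0,m-1)$ comes from $(i,k)=(2,m-1)$, not from $(m-2,1)$ --- $(2,m-1)$ genuinely lies on $i+k\equiv 1$, so $(0,m-1)$ correctly takes the $(x+1,y+2)$ branch. In fact no precedence issue arises at all, since the original branches are pairwise disjoint and $\Phi_0$ is a linear bijection, so the transformed branches are automatically disjoint.
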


\begin{proof}
Substitute $i=x-2y$ and $k=y$ into the Case~I formula for $R_0$ in Appendix~\ref{app:routeE} and simplify branch by
branch.
The generic move $(i,k)\mapsto(i-2,k+1)$ becomes $(x,y)\mapsto(x,y+1)$, and the six exceptional
branches transform exactly as displayed.
\end{proof}

Write
\[
G(x,y)=(x,y+1)
\]
for the generic branch in Lemma~\ref{lem:R0-caseI-xy}.
The six exceptional moves will be denoted $P,Q,R,S,A,B$ in the order displayed in the lemma:
$P$ and $Q$ are isolated-point defects at $(0,0)$ and $(m-1,m-1)$;
$R$ maps $(m-2,0)$ to $(0,2)$;
$S$ maps $(1,1)$ to $(0,1)$;
$A$ is the defect on the line $u=t+1$ (i.e.\ $x=y+1$);
$B$ is the defect on the line $t=0$ (i.e.\ $y=0$) together with the boundary points $(0,1)$ and $(m-2,m-1)$.

\begin{proposition}\label{prop:R0-caseI-data}
Assume $m\equiv 0,2\pmod 6$.
Then the first-return map on $L_0$ is
\[
T_0(x)=
\begin{cases}
m-2,&x=0,\\
x+2,&1\le x\le m-5,\\
m-1,&x=m-4,\\
2,&x=m-3,\\
1,&x=m-2,\\
0,&x=m-1,
\end{cases}
\]
and the return times are
\[
\rho_0(x)=
\begin{cases}
1,&x=0,\\
m-1,&1\le x\le m-4\ \text{or}\ x=m-1,\\
2m-3,&x=m-3,\\
2m-1,&x=m-2.
\end{cases}
\]
\end{proposition}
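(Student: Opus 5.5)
The plan is a direct orbit-tracing argument in the frame $(x,y)=(i+2k,k)$, using the explicit branch list of Lemma~\ref{lem:R0-caseI-xy} and the defect-itinerary reduction of Lemma~\ref{lem:routeE-itinerary}. From each start $(x,0)\in L_0$, I will follow the orbit as an alternation of generic climbs $G^{a}$ and single defect moves $P,Q,R,S,A,B$, stopping at the first return to $y=0$. The new lane is read off from the sum of the lane increments of the defects, and $\rho_0(x)$ is the number of generic steps plus the number of defect steps. One convention has to be fixed: a point lying on several listed sets, such as $(1,1)$, which is both $S$ and on the line $x=y+1$, follows the first applicable branch in the order of the lemma. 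The case split is $x=0$; the generic range $1\le x\le m-5$; and the boundary lanes $m-4$, $m-3$, $m-2$, $m-1$.

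\textbf{The generic family.} For $1\le x\le m-5$, the start point lies on $y=0$ with $1\le x\le m-3$, so the first move is $B$, giving $(x+1,1)$. On lane $x+1$ the only defect is the $A$-line $x=y+1$, met at $(x+1,x)$ after $x-1$ generic steps. That $A$ move sends the orbit to $(x+2,x+2)$. From there the orbit climbs generically through $y=m-1$ back to $y=0$. This climb takes $m-x-2$ steps, because on lane $x+2$ the $A$-height $x+1$ is already behind and the isolated points $(m-2,m-1)$, $(m-1,m-1)$, $(0,1)$ and $(1,1)$ are not on this lane in this range. So $T_0(x)=x+2$ and $\rho_0(x)=1+(x-1)+1+(m-x-2)=m-1$.

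\textbf{The boundary lanes.} Each is an explicit short itinerary:
\begin{itemize}
\item $x=0$: the move $P$ gives $(m-2,0)$ in one step.
\item $x=m-1$: this lane is not on $y=0$ with $1\le x\le m-3$. It climbs to $(m-1,m-2)$ and then $A$ gives $(0,0)$, so $\rho_0=m-1$.
\item $x=m-4$: the moves are $B$, a climb, $A$ to $(m-2,m-2)$, one $G$, and then the boundary $B$-point $(m-2,m-1)$, which gives $(m-1,0)$. This again totals $m-1$ steps.
\item $x=m-3$: the moves are $B$ to $(m-2,1)$, a climb, $A$ to $(m-1,m-1)$, $Q$ to $(2,2)$, and then $m-2$ generic steps to $(2,0)$. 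The total is $2m-3$.
\item $x=m-2$: the moves are $R$ to $(0,2)$, a climb to $(0,m-1)$, $A$ to $(1,1)$, $S$ to $(0,1)$, $B$ to $(1,2)$, and then $m-2$ generic steps to $(1,0)$. The total is $2m-1$.
\end{itemize}

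\textbf{Main obstacle and final check.} The main obstacle is not the arithmetic but the claim that no unlisted defect is met along the generic climbs. For each block I will check the lane value and height range against every branch condition of Lemma~\ref{lem:R0-caseI-xy}. I will also check that the point reached just before each return really takes the listed branch under the priority order. The congruence hypothesis $m\equiv0,2\pmod 6$ should enter only through Lemma~\ref{lem:R0-caseI-xy} itself. As a sanity check, the return times sum to
\[
1+(m-3)(m-1)+(2m-3)+(2m-1)=m^2,
\]
which is what Lemma~\ref{lem:counting} will need afterwards.
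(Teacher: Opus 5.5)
Your proposal is correct and follows the paper's proof essentially verbatim. It uses the same frame $(x,y)=(i+2k,k)$, the same case split, and the same defect itineraries ($P$; $B\,G^{x-1}A\,G^{m-x-2}$; $B\,G^{m-5}A\,G\,B$; $B\,G^{m-4}A\,Q\,G^{m-2}$; $R\,G^{m-3}A\,S\,B\,G^{m-2}$; $G^{m-2}A$) with the same step counts.

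One small correction: $(1,1)$ does not lie on the line $x=y+1$, since $1\neq 2$. In fact the branches of Lemma~\ref{lem:R0-caseI-xy} are pairwise disjoint, so your priority convention is unnecessary, though harmless.
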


\begin{proof}
Fix $x\in \Zm$ and start from $(x,0)\in L_0$.

\smallskip
\noindent\textbf{Case $x=0$.}
By branch $P$,
\[
(0,0)\xrightarrow{P}(m-2,0),
\]
so $T_0(0)=m-2$ and $\rho_0(0)=1$.

\smallskip
\noindent\textbf{Case $1\le x\le m-5$.}
The orbit is
\[
(x,0)\xrightarrow{B}(x+1,1)\xrightarrow{G^{x-1}}(x+1,x)\xrightarrow{A}(x+2,x+2)
\xrightarrow{G^{m-x-2}}(x+2,0).
\]
Indeed, on the lane $x+1$ the first defect above $y=1$ is the $A$-point $(x+1,x)$.
After that jump the orbit lies on lane $x+2$ at height $y=x+2$.
On this lane the only possible $A$-point is $(x+2,x+1)$, whose height $x+1$ is already behind the current
clock value.
The special points $P,Q,R,S$ lie on different lanes, and the $B$-branch occurs only at $y=0$ or at the
isolated points $(0,1)$ and $(m-2,m-1)$, none of which lies ahead on lane $x+2$.
Thus every remaining step before $y=0$ is generic.
Therefore
\[
T_0(x)=x+2,\qquad
\rho_0(x)=1+(x-1)+1+(m-x-2)=m-1.
\]

\smallskip
\noindent\textbf{Case $x=m-4$.}
Here
\[
(m-4,0)\xrightarrow{B}(m-3,1)\xrightarrow{G^{m-5}}(m-3,m-4)\xrightarrow{A}(m-2,m-2)
\xrightarrow{G}(m-2,m-1)\xrightarrow{B}(m-1,0).
\]
So
\[
T_0(m-4)=m-1,\qquad
\rho_0(m-4)=1+(m-5)+1+1+1=m-1.
\]

\smallskip
\noindent\textbf{Case $x=m-3$.}
The orbit is
\[
(m-3,0)\xrightarrow{B}(m-2,1)\xrightarrow{G^{m-4}}(m-2,m-3)\xrightarrow{A}(m-1,m-1)
\xrightarrow{Q}(2,2)\xrightarrow{G^{m-2}}(2,0).
\]
Hence
\[
T_0(m-3)=2,\qquad
\rho_0(m-3)=1+(m-4)+1+1+(m-2)=2m-3.
\]

\smallskip
\noindent\textbf{Case $x=m-2$.}
The orbit is
\[
(m-2,0)\xrightarrow{R}(0,2)\xrightarrow{G^{m-3}}(0,m-1)\xrightarrow{A}(1,1)\xrightarrow{S}(0,1)
\xrightarrow{B}(1,2)\xrightarrow{G^{m-2}}(1,0).
\]
Therefore
\[
T_0(m-2)=1,\qquad
\rho_0(m-2)=1+(m-3)+1+1+1+(m-2)=2m-1.
\]

\smallskip
\noindent\textbf{Case $x=m-1$.}
The orbit is
\[
(m-1,0)\xrightarrow{G^{m-2}}(m-1,m-2)\xrightarrow{A}(0,0).
\]
So
\[
T_0(m-1)=0,\qquad \rho_0(m-1)=m-2+1=m-1.
\]

These cases exhaust $x\in \Zm$.
\end{proof}

\subsubsection{Case II: \texorpdfstring{$m\equiv 4\pmod 6$}{m = 4 (mod 6)}}

\begin{lemma}\label{lem:R0-caseII-xy}
Assume $m\equiv 4\pmod 6$.
In the coordinates $(x,y)=(i+2k,k)$, the return map $R_0$ becomes
\[
R_0(x,y)=
\begin{cases}
(m-2,0),&(x,y)=(0,0),\\
(x+3,y+3),&(x,y)\in\{(m-1,m-1),(m-2,m-2)\},\\
(x+2,y+2),&\substack{(x,y)\in\{(m\!-\!2,0),(0,m\!-\!1)\}\ \text{or}\ x\equiv 1+2y\pmod m,\\ 0\le y\le m-3,}\\
(0,1),&(x,y)=(1,1),\\
(x+1,y+2),&x=y+1,\ (x,y)\notin\{(1,0),(0,m-1)\},\\
(x+1,y+1),&\substack{y=0,\ 2\le x\le m-3,\\ \text{or }(x,y)\in\{(0,1),(m-3,m-2),(m-2,m-1)\},}\\
(x,y+1),&\text{otherwise}.
\end{cases}
\]
All arithmetic is modulo $m$.
\end{lemma}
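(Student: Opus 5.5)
The plan mirrors the proof of Lemma~\ref{lem:R0-caseI-xy}: substitute $i=x-2y$, $k=y$ into the Case~II formula for $R_0$ recorded in Appendix~\ref{app:routeE}. Then rewrite each branch condition and each displacement in the $(x,y)$ frame, using the linear map $\Phi_0(i,k)=(i+2k,k)$ of Lemma~\ref{lem:routeE-bulkcoords}. Since $\Phi_0$ is invertible, the piecewise domains in $(i,k)$ correspond bijectively to piecewise domains in $(x,y)$. So it suffices to transport the image vectors and the defining sets branch by branch, and then check that the transported sets agree with the displayed ones.

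First I compute the displacements. An $(i,k)$-displacement $(\delta_i,\delta_k)$ becomes $(\delta_i+2\delta_k,\delta_k)$. This gives:
\begin{itemize}
\item the generic $(-2,1)\mapsto(0,1)$;
\item $(-2,0)\mapsto(-2,0)$, $(-1,0)\mapsto(-1,0)$ and $(-3,3)\mapsto(3,3)$;
\item $(-2,2)\mapsto(2,2)$, $(-3,2)\mapsto(1,2)$ and $(-1,1)\mapsto(1,1)$.
\end{itemize}

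Next I transport the domains. The following correspondences check the isolated-point branches of the lemma:
\begin{itemize}
\item $(i,k)=(0,0)\mapsto(0,0)$, giving $(0,0)\mapsto(m-2,0)$;
\item $(m-1,1)\mapsto(1,1)$, with image $(0,1)$;
\item $(1,m-1)\mapsto(m-1,m-1)$ and $(2,m-2)\mapsto(m-2,m-2)$, both with the $(3,3)$ shift.
\end{itemize}

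For the $(2,2)$ branch, the line $i=1$ with $0\le k\le m-3$ becomes $x\equiv 1+2y$ with $0\le y\le m-3$. The extra points $(2,m-1)\mapsto(0,m-1)$ and $(m-2,0)\mapsto(m-2,0)$ complete that branch.

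For the $(1,2)$ branch, $i+k\equiv1$ becomes $x-y\equiv1$. The excluded points $(1,0)\mapsto(1,0)$ and $(2,m-1)\mapsto(0,m-1)$ match the stated exclusions.

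For the $(1,1)$ branch, $k=0$ with $2\le i\le m-3$ becomes $y=0$ with $2\le x\le m-3$. The three exceptional points transport as $(0,m-1)\mapsto(m-2,m-1)$, $(1,m-2)\mapsto(m-3,m-2)$ and $(m-2,1)\mapsto(0,1)$.

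The only genuine care point is whether the "first matching case" ordering is consistent, since some transported sets overlap nominally. For instance, $(m-2,m-2)$ satisfies $x=y+1$? No: $x-y=0$. But $(0,m-1)$ lies on $x=y+1$ and is excluded there, matching the Appendix exclusion. Likewise $(m-1,m-1)$ lies on $x\equiv1+2y$ only when $y=m-1$, which is outside $0\le y\le m-3$.

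So I would check each pairwise intersection of the transported lines, namely $x=y+1$, $x=1+2y$ and $y=0$. These meet at $(1,0)$, at $(0,m-1)$, and at $y=0,\,x=1$ only. For each such point I verify that exactly one branch applies and that it agrees with the original $(i,k)$ case split. I expect this overlap bookkeeping, which hinges on the disjointness already built into the Appendix formula via Lemma~\ref{rem:layer0-partition}, to be the main (though routine) obstacle. Once it is done, the "otherwise" branch is the image of the generic branch, and the lemma follows.
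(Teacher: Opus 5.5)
Your proposal is correct and is the paper's argument: substitute $i=x-2y$, $k=y$ into the Case~II formula for $R_0$ and transport each branch. Your displacement and domain computations all check out. One small slip: the lines $y=0$, $x=y+1$ and $x\equiv 1+2y$ meet pairwise only at $(1,0)$, and $(0,m-1)$ is not an intersection point. It is an isolated point of the $(x+2,y+2)$ branch that lies on $x=y+1$, and your exclusion check already handles it.
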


\begin{proof}
Again substitute $i=x-2y$ and $k=y$ into the Case~II formula from Appendix~\ref{app:routeE}.
The generic branch becomes $(x,y)\mapsto(x,y+1)$, and the exceptional branches simplify exactly as shown.
\end{proof}

As before write
\[
G(x,y)=(x,y+1)
\]
for the generic move.
For an odd lane $u\in \Zm$ define the two $R$-heights
\[
r_-(u)=\frac{u-1}{2},
\qquad
r_+(u)=\frac{u-1}{2}+\frac m2.
\]
These are exactly the two solutions $y\in \{0,\dots,m-1\}$ of
\[
u\equiv 1+2y\pmod m.
\]

\paragraph{Reading note for Case~II color $0$.}
Relative to Case~I, the only new long defect component is the extra $R$-branch on the affine line $x\equiv 1+2y\pmod m$.
It is therefore helpful to sort the lanes before reading Proposition~\ref{prop:R0-caseII-data}:
\begin{center}
{\footnotesize
\renewcommand{\arraystretch}{1.15}
\begin{tabularx}{\textwidth}{@{}ll>{\raggedright\arraybackslash}X@{}}
\toprule
family & lanes & role in the proof \\
\midrule
special point & $x=0$ & immediate bridge to $m-2$ in one step \\
warm-up lanes & $x=1,2$ & first encounters with the Case~II repair family \\
generic lanes & $3\le x\le m-7$ & the parity only changes the order of the defect encounters; in both parities the net lane increment is $+4$ and the return time is $m-2$ \\
boundary splice lanes & $x\in\{m-6,m-5,m-4,m-3,m-2,m-1\}$ & the only lanes where the isolated points $Q,S$ or the top $R$-hit alter the generic pattern and determine the final residue-$4$ splice \\
\bottomrule
\end{tabularx}
}
\end{center}
So the proposition is easiest to read in exactly that order: first the special and warm-up lanes, then the generic family, and finally the six boundary splice lanes.

\begin{proposition}\label{prop:R0-caseII-data}
Assume $m\equiv 4\pmod 6$.
Then the first-return map on $L_0$ is given by
\[
T_0(0)=m-2,\qquad T_0(1)=4,\qquad T_0(2)=6,\qquad T_0(x)=x+4\ \ (3\le x\le m-7),
\]
together with
\[
T_0(m-6)=3,\quad T_0(m-5)=m-1,\quad T_0(m-4)=0,
\]
\[
T_0(m-3)=2,\quad T_0(m-2)=5,\quad T_0(m-1)=1,
\]
and the return times are
\[
\rho_0(0)=1,\qquad \rho_0(x)=m-2 \quad (\text{for } 1\le x\le m-7 \text{ or } x=m-4),
\]
\[
\rho_0(m-6)=2m-4,\quad \rho_0(m-2)=2m-4,\quad \rho_0(m-5)=m-1,
\]
\[
\rho_0(m-1)=m-1,\quad
\rho_0(m-3)=2m-3.
\]
\end{proposition}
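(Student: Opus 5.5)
The plan is to prove Proposition~\ref{prop:R0-caseII-data} exactly as Proposition~\ref{prop:R0-caseI-data} was proved: trace orbits of the explicit map of Lemma~\ref{lem:R0-caseII-xy} lane by lane. By Lemma~\ref{lem:routeE-itinerary}, each orbit factors as a product of generic blocks $G^{a}$ and defect steps, so $T_0(x)$ is $x$ plus the sum of the lane increments, and $\rho_0(x)$ is the number of generic steps plus the number of defect steps. For each start $(x,0)$ I will list the defects in order of height and check that every point of each intermediate generic block avoids all defect branches. This check is possible because, on a fixed lane $u$, the only candidate defect heights are $y=u-1$ (the $A$-line $x=y+1$), $y=r_\pm(u)$ when $u$ is odd (the new $R$-line $x\equiv 1+2y$, only for $y\le m-3$), $y=0$ (the $B$-line), and the finitely many isolated points. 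I will follow the reading-note order: special lanes first, then warm-up lanes, then the generic family, then the boundary lanes.

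\textbf{Special and warm-up lanes.} The lane $x=0$ is immediate, since $(0,0)\mapsto(m-2,0)$. For $x=1$ the start point lies on the $R$-line at $y=0$ ($1\equiv 1+0$). The orbit goes $(1,0)\to(3,2)$, and then on lane $3$ it meets the $A$-point $(3,2)\to(4,4)$. It next checks that $r_\pm(3)=1,1+m/2$ lie behind or do not apply, and climbs to $(4,0)$, giving $\rho=1+1+(m-4)=m-2$. For $x=2$ the step $B$ sends $(2,0)\to(3,1)$, then $R$ at $r_-(3)=1$ sends it to $(5,3)$, and $A$ at $(5,4)$ sends it to $(6,6)$. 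Before wrap it must verify that $r_+(5)=2+m/2$ is not hit again; this needs care, because $r_+(5)>6$.

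\textbf{Generic lanes $3\le x\le m-7$.} Split by parity.
\begin{itemize}
\item For even $x$, the first step $B$ goes to the odd lane $x+1$ at height $1$. That lane meets $R$ at $r_-(x+1)=x/2$ (jump $+2,+2$), then lands on the odd lane $x+3$ at height $x/2+2$.
\item For odd $x$, the lane meets $R$ at $r_-(x)$ before $B$ is relevant. Here the order of the $B$-step at $y=0$ and the $R$-hits must be sorted out.
\end{itemize}
In both parities I expect exactly one $B$, one $A$ and one $R$ stall before wrap, giving $+4$ and return time $(m-3)+3=m-2$. The real content is the height comparison: after each jump, the remaining candidate heights $u-1$ and $r_\pm(u)$ on the new lane must be ahead of or behind the current clock as claimed. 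Here $m\equiv4\pmod 6$ and $x\le m-7$ will enter, in particular to ensure that $r_+$ of the final lane exceeds $m-3$ or lies behind the clock.

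\textbf{Boundary lanes and the sum check.} Each of $x\in\{m-6,\dots,m-1\}$ is traced explicitly. These are where $Q$, $S$, the points $(m-2,m-2)$, $(m-3,m-2)$, $(m-2,m-1)$, and the cutoff $y\le m-3$ on the $R$-line interact. For example, $x=m-1$ should run generically to the $A$-point $(m-1,m-2)$, then take the $(+3,+3)$ branch, and so on. As a final consistency check I will verify
\[
\textstyle\sum_x\rho_0(x)=1+(m-7)(m-2)+(m-2)+2(2m-4)+2(m-1)+(2m-3)=m^2,
\]
which also cross-checks the traces.

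\textbf{Main obstacle.} The hardest step is the generic odd/even family. Proving that no unexpected second $R$-hit (at $r_+$) or $A$-hit occurs requires uniform inequalities in $x$ and $m$, and this is exactly where the two $R$-heights per odd lane make Case~II delicate. I would first tabulate the heights symbolically as functions of $x$ and confirm each ordering against the case $m=10$ before writing the general comparison.
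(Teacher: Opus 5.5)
Your method is the paper's: trace each lane of Lemma~\ref{lem:R0-caseII-xy} through its defect itinerary, in the same order (special, warm-up, generic, six boundary lanes). Your return-time sum is correct. However, the proposition consists of nothing but these itineraries, and two of the ones you predict are wrong.

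\emph{Odd generic lanes.} For odd $x$ with $3\le x\le m-7$, the start point $(x,0)$ is itself a $B$-point ($y=0$, $2\le x\le m-3$). So the orbit leaves lane $x$ at the first step, and $r_-(x)$ is never reached. The actual itinerary is
\begin{multline*}
(x,0)\xrightarrow{B}(x+1,1)\xrightarrow{G^{x-1}}(x+1,x)\xrightarrow{A}(x+2,x+2)\\
\xrightarrow{G^{(m-x-3)/2}}\bigl(x+2,\,r_+(x+2)\bigr)\xrightarrow{R}\Bigl(x+4,\,\tfrac{x+5+m}{2}\Bigr)\xrightarrow{G^{(m-x-5)/2}}(x+4,0).
\end{multline*}
So the upper $R$-hit you planned to rule out is exactly the one that must occur. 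The two inequalities needed are:
\begin{itemize}
\item $x+2<r_+(x+2)$, i.e.\ $x<m-3$;
\item $r_+(x+2)=\tfrac{x+1+m}{2}\le m-3$, i.e.\ $x\le m-7$.
\end{itemize}
The second is why the generic range stops at $m-7$: for $x=m-5$ the would-be hit is at height $m-2$, outside the $R$-branch, so that lane becomes a boundary lane. The congruence $m\equiv 4\pmod 6$ enters no height comparison; only $m$ even and $m\ge 10$ are used. Also, the step count is $(m-5)+3$, not $(m-3)+3$, because the three defect steps advance $y$ by $1+2+2=5$.

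\emph{Lane $x=m-1$.} Your sketch, a generic run up to the $A$-point $(m-1,m-2)$, is the Case~I trajectory. It would end with $(m-1,m-2)\mapsto(0,0)$, giving $T_0(m-1)=0$ and contradicting the statement. Besides, the $(+3,+3)$ branch sits at $(m-1,m-1)$, not at $(m-1,m-2)$. Lane $m-1$ is odd with $r_-(m-1)=\tfrac m2-1<m-2$, so
\[
(m-1,0)\xrightarrow{G^{m/2-1}}\Bigl(m-1,\tfrac m2-1\Bigr)\xrightarrow{R}\Bigl(1,\tfrac m2+1\Bigr)\xrightarrow{G^{m/2-1}}(1,0),
\]
which gives $T_0(m-1)=1$ and $\rho_0(m-1)=m-1$.

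\emph{Smaller misplacements.} For $x=1$ and $x=2$, the $A$-jump lands on the even lanes $4$ and $6$, which carry no $R$-points, so nothing delicate happens afterwards. On lane $5$ you only need $r_+(5)=2+\tfrac m2>4$, so that the $A$-point at height $4$ comes first.

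With these itineraries corrected, your plan is the paper's proof. As written, the odd-lane and $x=m-1$ steps would not produce the claimed values.
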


\begin{proof}
Fix $x\in \Zm$ and start from $(x,0)\in L_0$.

\smallskip
\noindent\textbf{Case $x=0$.}
By the special point branch,
\[
(0,0)\xrightarrow{P}(m-2,0),
\]
so $T_0(0)=m-2$ and $\rho_0(0)=1$.

\smallskip
\noindent\textbf{Case $x=1$.}
The orbit is
\[
(1,0)\xrightarrow{R}(3,2)\xrightarrow{A}(4,4)\xrightarrow{G^{m-4}}(4,0).
\]
So
\[
T_0(1)=4,\qquad \rho_0(1)=1+1+(m-4)=m-2.
\]

\smallskip
\noindent\textbf{Case $x=2$.}
The orbit is
\[
(2,0)\xrightarrow{B}(3,1)\xrightarrow{R}(5,3)\xrightarrow{G}(5,4)\xrightarrow{A}(6,6)
\xrightarrow{G^{m-6}}(6,0),
\]
hence
\[
T_0(2)=6,\qquad \rho_0(2)=1+1+1+1+(m-6)=m-2.
\]

\smallskip
\noindent\textbf{Case $3\le x\le m-7$ (generic lanes).}
In both the odd and even sub-cases, the orbit starting from $(x,0)$ encounters exactly one $B$-stall (at $y=0$), one or two traversals of the $R$-branch (depending on parity), and one $A$-stall, with total lane increment $4$ and return time $m-2$.

\smallskip
\noindent\textbf{Odd $x$:}\enspace
The orbit follows $B\to G^{x-1}\to A\to G^{\frac{m-x-3}{2}}\to R\to G^{\frac{m-x-5}{2}}$, encountering the $A$-point at height $x$ and the upper $R$-point at height $r_+(x+2)$.

\smallskip
\noindent\textbf{Even $x$:}\enspace
The orbit follows $B\to G^{x/2-1}\to R\to G^{x/2}\to A\to G^{m-x-4}$, encountering the lower $R$-point at height $x/2$ and the $A$-point at height $x+2$.

\smallskip\noindent
In both cases,
\[
T_0(x)=x+4,\qquad \rho_0(x)=m-2.
\]

\smallskip
\noindent\textbf{Case $x=m-6$.}
Here the orbit is
\begin{multline*}
(m-6,0)\xrightarrow{B}(m-5,1)
\xrightarrow{G^{\frac{m-8}{2}}}\left(m-5,\frac{m-6}{2}\right)\xrightarrow{R}\left(m-3,\frac m2-1\right) \\
\xrightarrow{G^{\frac{m-6}{2}}}(m-3,m-4)\xrightarrow{A}(m-2,m-2)
\end{multline*}
\[
\xrightarrow{Q}(1,1)\xrightarrow{S}(0,1)\xrightarrow{B}(1,2)
\xrightarrow{G^{\frac{m-4}{2}}}\left(1,\frac m2\right)\xrightarrow{R}\left(3,\frac m2+2\right)
\xrightarrow{G^{\frac{m-4}{2}}}(3,0).
\]
Thus
\[
T_0(m-6)=3,
\]
and
\[
\rho_0(m-6)=
1+\frac{m-8}{2}+1+\frac{m-6}{2}+1+1+1+1+\frac{m-4}{2}+1+\frac{m-4}{2}
=
2m-4.
\]

\smallskip
\noindent\textbf{Case $x=m-5$.}
The orbit is
\begin{multline*}
(m-5,0)\xrightarrow{B}(m-4,1)\xrightarrow{G^{m-6}}(m-4,m-5)\xrightarrow{A}(m-3,m-3) \\
\xrightarrow{G}(m-3,m-2)\xrightarrow{B}(m-2,m-1)\xrightarrow{B}(m-1,0).
\end{multline*}
Therefore
\[
T_0(m-5)=m-1,\qquad
\rho_0(m-5)=1+(m-6)+1+1+1+1=m-1.
\]

\smallskip
\noindent\textbf{Case $x=m-4$.}
The orbit is
\[
(m-4,0)\xrightarrow{B}(m-3,1)\xrightarrow{G^{\frac m2-3}}\left(m-3,\frac{m-4}{2}\right)
\xrightarrow{R}\left(m-1,\frac m2\right)\xrightarrow{G^{\frac m2-2}}(m-1,m-2)\xrightarrow{A}(0,0).
\]
So
\[
T_0(m-4)=0,\qquad
\rho_0(m-4)=1+\left(\frac m2-3\right)+1+\left(\frac m2-2\right)+1=m-2.
\]

\smallskip
\noindent\textbf{Case $x=m-3$.}
The orbit is
\[
(m-3,0)\xrightarrow{B}(m-2,1)\xrightarrow{G^{m-4}}(m-2,m-3)\xrightarrow{A}(m-1,m-1)
\xrightarrow{Q}(2,2)\xrightarrow{G^{m-2}}(2,0).
\]
Hence
\[
T_0(m-3)=2,\qquad
\rho_0(m-3)=1+(m-4)+1+1+(m-2)=2m-3.
\]

\smallskip
\noindent\textbf{Case $x=m-2$.}
The orbit is
\begin{multline*}
(m-2,0)\xrightarrow{R}(0,2)\xrightarrow{G^{m-3}}(0,m-1)\xrightarrow{R}(2,1)\xrightarrow{A}(3,3) \\
\xrightarrow{G^{\frac{m-4}{2}}}\left(3,\frac m2+1\right)\xrightarrow{R}\left(5,\frac m2+3\right)
\xrightarrow{G^{\frac{m-6}{2}}}(5,0).
\end{multline*}
Therefore
\[
T_0(m-2)=5,
\]
and
\[
\rho_0(m-2)=1+(m-3)+1+1+\frac{m-4}{2}+1+\frac{m-6}{2}=2m-4.
\]

\smallskip
\noindent\textbf{Case $x=m-1$.}
The orbit is
\[
(m-1,0)\xrightarrow{G^{\frac m2-1}}\left(m-1,\frac m2-1\right)\xrightarrow{R}\left(1,\frac m2+1\right)
\xrightarrow{G^{\frac m2-1}}(1,0).
\]
Hence
\[
T_0(m-1)=1,\qquad
\rho_0(m-1)=\left(\frac m2-1\right)+1+\left(\frac m2-1\right)=m-1.
\]

These cases exhaust $x\in \Zm$.
\end{proof}

\subsection{Splice graphs and completion}

The preceding propositions determine the explicit first-return maps $T_c$ and return times $\rho_c$.
We now compress the single-cycle step for the induced lane maps into a finite splice-permutation statement on arithmetic family-blocks.

\begin{proposition}[defect-set splice normal form for the Route~E lane maps]\label{prop:routeE-splice}
After the first-return formulas are known, the defect set does not create any new long-range dynamics inside the bulk arithmetic runs.
Instead, it only determines where finitely many ordered arithmetic family-blocks are cut and how they are spliced together.
After absorbing the isolated bridge vertices into adjacent arithmetic runs, the block data are as follows.

\medskip
\noindent\textbf{Color $2$.}
\begin{center}
{\footnotesize
\renewcommand{\arraystretch}{1.15}
\begin{tabularx}{\textwidth}{@{}l>{\raggedright\arraybackslash}Xc@{}}
\toprule
case & ordered family-blocks & splice \\
\midrule
all even $m\ge 6$ & $F_{2,1}=(0,1)$; $F_{2,2}=(m-1,m-2,\dots,2)$ & $(1\,2)$ \\
\bottomrule
\end{tabularx}
}
\end{center}

\medskip
\noindent\textbf{Color $1$.}
\begin{center}
{\footnotesize
\renewcommand{\arraystretch}{1.15}
\begin{tabularx}{\textwidth}{@{}l>{\raggedright\arraybackslash}Xc@{}}
\toprule
case & ordered family-blocks & splice \\
\midrule
$m\equiv 2\pmod 6$ & $F_{1,1}=(0,2,5,\dots,m-3)$; $F_{1,2}=(1,4,7,\dots,m-1)$; $F_{1,3}=(3,6,9,\dots,m-2)$ & $(1\,2\,3)$ \\
$m\equiv 0\pmod 6$ & $F_{1,1}=(0,2,5,\dots,m-1)$; $F_{1,2}=(3,6,9,\dots,m-3,1)$; $F_{1,3}=(4,7,10,\dots,m-2)$ & $(1\,2\,3)$ \\
$m\equiv 4\pmod 6$ & $F_{1,1}=(0,2,5,11,\dots,m-5,1)$; $F_{1,2}=(3,9,15,\dots,m-1,7,13,\dots,m-3)$; $F_{1,3}=(4,6,8,\dots,m-2)$ & $(1\,2\,3)$ \\
\bottomrule
\end{tabularx}
}
\end{center}

\medskip
\noindent\textbf{Color $0$.}
\begin{center}
{\footnotesize
\renewcommand{\arraystretch}{1.15}
\begin{tabularx}{\textwidth}{@{}l>{\raggedright\arraybackslash}Xc@{}}
\toprule
case & ordered family-blocks & splice \\
\midrule
$m\equiv 0,2\pmod 6$ & $F_{0,1}=(0,m-2)$; $F_{0,2}=(1,3,5,\dots,m-3)$; $F_{0,3}=(2,4,6,\dots,m-4)$; $F_{0,4}=(m-1)$ & $(1\,2\,3\,4)$ \\
$m\equiv 10\pmod{12}$ & $F_{0,1}=(0,m-2,5,9,\dots,m-1,1)$; $F_{0,2}=(4,8,\dots,m-6)$; $F_{0,3}=(3,7,\dots,m-3)$; $F_{0,4}=(2,6,\dots,m-4)$ & $(1\,2\,3\,4)$ \\
$m\equiv 4\pmod{12}$ & $F_{0,1}=(0,m-2,5,9,\dots,m-3)$; $F_{0,2}=(2,6,\dots,m-6)$; $F_{0,3}=(3,7,\dots,m-1,1)$; $F_{0,4}=(4,8,\dots,m-4)$ & $(1\,2\,3\,4)$ \\
\bottomrule
\end{tabularx}
}
\end{center}

In every row, the map $T_c$ advances to the next term inside each block, and the terminal point of $F_{c,j}$ maps to the initial point of $F_{c,\pi_c(j)}$.
Consequently each $T_c$ is a single $m$-cycle.
\end{proposition}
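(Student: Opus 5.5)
The proof is a direct check: each row's block data is compared against the already-established first-return formulas, and then Lemma~\ref{lem:splice-graph} is applied. For each color $c$ and congruence class, I take the explicit lane map $T_c$ from Proposition~\ref{prop:R2-main-data}, \ref{prop:R1-caseI}, \ref{prop:R1-caseII}, \ref{prop:R0-caseI-data}, or \ref{prop:R0-caseII-data}. I then verify three things about the listed blocks $F_{c,1},\dots,F_{c,r}$. First, they partition $\Zm$; each block is an arithmetic progression, or a concatenation of two progressions with a short prefix or suffix. Second, $T_c$ sends each non-terminal entry to its successor. Third, the terminal entry of $F_{c,j}$ maps to the initial entry of $F_{c,j+1}$ (indices mod $r$). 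With these in hand, Lemma~\ref{lem:splice-graph} converts the cyclic splice permutation $(1\,2\,\cdots\,r)$ into the statement that $T_c$ is a single $m$-cycle.

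The interior successor relations follow immediately from the generic branches. For color~$2$ the generic branch is $x\mapsto x-1$. For color~$1$ it is $x\mapsto x+3$ in Case~I; in Case~II it is $x\mapsto x+2$ on even lanes and $x\mapsto x+6$ on odd lanes. For color~$0$ it is $x\mapsto x+2$ in Case~I and $x\mapsto x+4$ in Case~II. The work is therefore at the block endpoints, and I would organize it by residues. Take color~$1$, Case~I, with $m\equiv2\pmod 6$. Then $m-3\equiv 2$, $m-1\equiv1$, and $m-2\equiv0\pmod 3$. So the progression starting at $2$ ends at $m-3$, the one starting at $1$ ends at $m-1$, and the one starting at $3$ ends at $m-2$. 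The boundary values $T_1(m-3)=1$, $T_1(m-1)=3$, $T_1(m-2)=0$, and $T_1(0)=2$ then produce exactly the $3$-cycle of splices. The case $m\equiv0\pmod 6$ is the same argument with the residues permuted, and $1$ is absorbed as a bridge vertex after $m-3$.

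Color~$0$ in Case~I is similar, working mod $2$. The odd progression $1,3,\dots,m-3$ ends with $T_0(m-3)=2$. The even progression $2,\dots,m-4$ ends with $T_0(m-4)=m-1$. Then $m-1\mapsto0$, $0\mapsto m-2$, and $m-2\mapsto1$.

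The main obstacle is Case~II, where blocks mix residue classes mod $4$ or mod $6$ and the bookkeeping depends on $m\bmod 12$. For color~$1$ with $m=6q+4$, the odd lanes advance by $6$. Since $m-5\equiv 5$ and $m-1\equiv 3\pmod 6$, the chain $5,11,\dots$ reaches $m-5$. That value is then sent to $1$, because $T_1(m-5)=m+1\equiv1$, and $T_1(1)=3$ starts the chain $3,9,\dots,m-1$. Next $T_1(m-1)=7$ starts $7,13,\dots,m-3$, and $T_1(m-3)=4$ enters the even block $4,\dots,m-2$, which closes through $0\mapsto2\mapsto5$.

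For color~$0$ in Case~II, I would split $m\equiv 4$ versus $m\equiv10\pmod{12}$. This split decides which residue class mod $4$ contains $m-1$, $m-3$, $m-4$, and $m-6$. In each subcase I would trace the four mod-$4$ progressions using the boundary values $T_0(m-6)=3$, $T_0(m-5)=m-1$, $T_0(m-4)=0$, $T_0(m-3)=2$, $T_0(m-2)=5$, and $T_0(m-1)=1$, and check that the blocks are joined in one cyclic order. I expect this to need care, especially for the smallest admissible $m$ ($m=10,16$), where some runs are empty and must be omitted as the paper allows. Finally, as a sanity check, I would confirm that the block lengths sum to $m$ in each row.
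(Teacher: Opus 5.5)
Your proposal is correct and takes essentially the paper's approach: read the within-block successor relations off the first-return formulas, check that each block's terminal maps to the next block's initial point, and apply Lemma~\ref{lem:splice-graph}. The one piece you leave as a plan is color~$0$ in Case~II, split by $m\bmod 12$, which the paper writes out explicitly. When you carry it out, your list of boundary values must also include the non-generic lower values $T_0(0)=m-2$ and $T_0(1)=4$.
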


\begin{proof}
For $T_2$, Proposition~\ref{prop:R2-main-data} gives
\[
T_2(0)=1,\qquad T_2(1)=m-1,\qquad T_2(2)=0,\qquad T_2(x)=x-1\quad (3\le x\le m-1),
\]
which is exactly the successor rule inside the blocks $F_{2,1},F_{2,2}$ with splice permutation $\pi_2=(1\,2)$.

For color~$1$, Proposition~\ref{prop:R1-caseI} yields the Case~I formulas and Proposition~\ref{prop:R1-caseII} the Case~II formulas.
In Case~I the generic rule $x\mapsto x+3$ produces the three long arithmetic families, while the special values $0,m-3,m-2,m-1$ give the terminal-to-initial jumps recorded in the displayed blocks.
In Case~II the even rule $x\mapsto x+2$ and odd rule $x\mapsto x+6$ produce the three displayed blocks, and the special values $0,1,2,m-3,m-2,m-1$ again determine only the block-to-block jumps.
Thus in every congruence class the induced splice permutation is $\pi_1=(1\,2\,3)$.

For color~$0$, Proposition~\ref{prop:R0-caseI-data} gives the Case~I formulas with generic rule $x\mapsto x+2$, and Proposition~\ref{prop:R0-caseII-data} gives the Case~II formulas with generic rule $x\mapsto x+4$.
In Case~I the exceptional values $0,m-4,m-3,m-2,m-1$ cut the odd and even arithmetic families into the four displayed blocks.
For Case~II, we spell out the nontrivial closure because it is the most bookkeeping-heavy row.
The generic rule is $x\mapsto x+4$ on $3\le x\le m-7$.
If $m\equiv 10\pmod{12}$, Proposition~\ref{prop:R0-caseII-data} gives the special values
\[
T_0(0)=m-2,\quad T_0(m-2)=5,\quad T_0(m-1)=1,\quad T_0(1)=4,
\]
\[
T_0(m-6)=3,\quad T_0(m-3)=2,\quad T_0(m-4)=0.
\]
Hence the orbit segments are
\[
0\to m-2\to 5\to 9\to\cdots\to m-1\to 1,
\]
\[
4\to 8\to\cdots\to m-6,\qquad 3\to 7\to\cdots\to m-3,\qquad 2\to 6\to\cdots\to m-4,
\]
which are exactly the displayed blocks $F_{0,1},F_{0,2},F_{0,3},F_{0,4}$.
Their terminal images are
\[
1\mapsto 4,\qquad m-6\mapsto 3,\qquad m-3\mapsto 2,\qquad m-4\mapsto 0,
\]
so the terminals of $F_{0,1},F_{0,2},F_{0,3},F_{0,4}$ map to the initials of $F_{0,2},F_{0,3},F_{0,4},F_{0,1}$, giving the splice permutation $(1\,2\,3\,4)$.
If instead $m\equiv 4\pmod{12}$, the same generic rule together with
\[
T_0(0)=m-2,\quad T_0(m-2)=5,\quad T_0(m-3)=2,\quad T_0(m-6)=3,\quad T_0(1)=4,\quad T_0(m-4)=0
\]
yields the blocks
\[
0\to m-2\to 5\to 9\to\cdots\to m-3,
\]
\[
2\to 6\to\cdots\to m-6,\qquad 3\to 7\to\cdots\to m-1\to 1,\qquad 4\to 8\to\cdots\to m-4,
\]
with terminal images
\[
m-3\mapsto 2,\qquad m-6\mapsto 3,\qquad 1\mapsto 4,\qquad m-4\mapsto 0.
\]
Again the terminals map to the initials of the next displayed blocks, so the splice permutation is $(1\,2\,3\,4)$.
Thus in every congruence class the induced splice permutation is $\pi_0=(1\,2\,3\,4)$.

Therefore the hypotheses of Lemma~\ref{lem:splice-graph} hold in each row.
Since every displayed $\pi_c$ is a single cycle, each induced lane map $T_c$ is a single $m$-cycle.
\end{proof}

For color~$2$, Proposition~\ref{prop:routeE-splice} recovers the cyclic order already used in Corollary~\ref{cor:R2-main}.
For colors~$1$ and $0$, the remaining corollaries combine Proposition~\ref{prop:routeE-splice} with the return-time sums from the first-return formulas.

\begin{corollary}\label{cor:R1-caseI}
If $m\equiv 0,2\pmod 6$, then $R_1$ is a single $m^2$-cycle.
\end{corollary}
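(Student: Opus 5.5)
The plan is to apply Lemma~\ref{lem:counting} with $X=P_0\cong\Zm^2$, $F=R_1$, $N=m^2$, and the transversal $L=L_1=\{(x,0):x\in\Zm\}$ in the color-$1$ bulk coordinates $\Phi_1(i,k)=(i-k,k)$. Since $\Phi_1$ is a bijection (Lemma~\ref{lem:routeE-bulkcoords}), it suffices to prove the statement for the conjugate map in $(u,t)$-coordinates. First-return times are well defined on $L_1$ by Proposition~\ref{prop:R1-caseI}, which lists $\rho_1(x)$ for every $x\in\Zm$.

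Hypothesis (i) of Lemma~\ref{lem:counting}, that $T_1$ is a single $m$-cycle on $L_1$, comes from Proposition~\ref{prop:routeE-splice}. In the rows $m\equiv 2\pmod 6$ and $m\equiv 0\pmod 6$ for color~$1$, the lanes are partitioned into three ordered arithmetic blocks with splice permutation $(1\,2\,3)$. Lemma~\ref{lem:splice-graph} then gives that $T_1$ is a single $m$-cycle. As a sanity check I would confirm directly from the formula in Proposition~\ref{prop:R1-caseI} that the terminal values $m-3\mapsto1$, $m-2\mapsto0$, $m-1\mapsto3$ match the displayed block endpoints in each subcase.

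Hypothesis (ii) is a direct computation from the return times in Proposition~\ref{prop:R1-caseI}. The lanes $x=0$ and $x=m-2$ each contribute $1$. The $m-4$ lanes $1\le x\le m-4$ each contribute $m+2$. The lanes $x=m-3$ and $x=m-1$ each contribute $m+3$. Hence
\[
\sum_{x\in\Zm}\rho_1(x)=2+(m-4)(m+2)+2(m+3)=m^2-2m-8+2m+8=m^2 .
\]
Lemma~\ref{lem:counting} then shows that $R_1$ is a single $m^2$-cycle.

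The only real obstacle is already absorbed into the cited propositions, namely the correctness of the orbit traces for the boundary lanes and the block bookkeeping. Here the argument just checks that the two inputs combine, and the arithmetic of the return-time sum is the only new step.
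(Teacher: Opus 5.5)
Your proposal is correct and follows the paper's proof essentially verbatim: you take $T_1$ being a single $m$-cycle from Proposition~\ref{prop:routeE-splice} (via Lemma~\ref{lem:splice-graph}), and compute the return-time sum $2+(m-4)(m+2)+2(m+3)=m^2$ from Proposition~\ref{prop:R1-caseI}. Lemma~\ref{lem:counting} then closes the argument, just as in the paper.
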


\begin{proof}
In Case~I, Proposition~\ref{prop:routeE-splice} shows that $T_1$ is a single $m$-cycle.
Also, Proposition~\ref{prop:R1-caseI} gives
\[
\sum_{x\in \Zm}\rho_1(x)=\underbrace{2}_{x\in\{0,m{-}2\}}+\underbrace{(m-4)(m+2)}_{\text{generic}}+\underbrace{2(m+3)}_{\text{boundary}}=m^2.
\]
Now apply Lemma~\ref{lem:counting}.
\end{proof}

\begin{corollary}\label{cor:R1-caseII}
If $m\equiv 4\pmod 6$, then $R_1$ is a single $m^2$-cycle.
\end{corollary}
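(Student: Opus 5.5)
The proof follows Corollary~\ref{cor:R1-caseI}, with Proposition~\ref{prop:R1-caseII} supplying the Case~II first-return data. By Lemma~\ref{lem:counting}, applied to $F=R_1$ on $X=P_0$ with transversal $L=L_1$, it suffices to check two things: the induced lane map $T_1$ is a single $m$-cycle, and $\sum_{x\in\Zm}\rho_1(x)=m^2$. Every point of $L_1$ has a finite first return, and Proposition~\ref{prop:R1-caseII} gives these returns explicitly.

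\emph{Single-cycle step.} I will invoke Proposition~\ref{prop:routeE-splice}, row $m\equiv 4\pmod 6$, whose blocks are
\[
F_{1,1}=(0,2,5,11,\dots,m-5,1),\quad F_{1,2}=(3,9,\dots,m-1,7,13,\dots,m-3),\quad F_{1,3}=(4,6,\dots,m-2),
\]
with splice permutation $(1\,2\,3)$. Lemma~\ref{lem:splice-graph} then makes $T_1$ a single $m$-cycle. As a consistency check against Proposition~\ref{prop:R1-caseII}, the odd lanes split into two arithmetic progressions of step $6$, namely those $\equiv 5$ and those $\equiv 3\pmod 6$. The first is entered from $T_1(2)=5$ and ends at $m-5$, which maps to $1$. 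The second runs from $3$ to $m-1$, jumps via $T_1(m-1)=7$, and continues to $m-3$, which maps to $4$. The even lanes $4,\dots,m-2$ form the $+2$ run, and $T_1(m-2)=0$ closes the loop. The terminal-to-initial jumps are therefore $1\mapsto 3$, $m-3\mapsto 4$, and $m-2\mapsto 0$, which is exactly $(1\,2\,3)$.

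\emph{Return-time sum.} Group the values of $\rho_1$ from Proposition~\ref{prop:R1-caseII} as follows:
\begin{itemize}
\item the three lanes $\{0,1,m-2\}$ contribute $3$;
\item the lane $x=2$ contributes $m+3$;
\item the even lanes other than $0,2,m-2$ number $m/2-3$, each contributing $m+2$;
\item the odd lanes other than $1,m-3,m-1$ also number $m/2-3$, each contributing $m+4$;
\item the two lanes $m-3,m-1$ contribute $2(m+6)$.
\end{itemize}
Hence
\[
\sum_x\rho_1(x)=3+(m+3)+\Bigl(\tfrac m2-3\Bigr)(2m+6)+2m+12=3m+18+(m^2-3m-18)=m^2 .
\]
Lemma~\ref{lem:counting} then gives that $R_1$ is a single $m^2$-cycle.

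The main obstacle is not the arithmetic but confirming that the Case~II block description stays valid at the smallest admissible modulus $m=10$, where some runs degenerate. Under the convention of omitting empty runs, I will check $m=10$ directly against the explicit cycle $0\to2\to5\to1\to3\to9\to7\to4\to6\to8\to0$ recorded in Example~\ref{ex:m10-color1}. Its return times are $1,1,13,16,12,12,14,16,1,16$, which sum to $100$.
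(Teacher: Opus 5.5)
Your proof is correct and is the paper's own argument: Proposition~\ref{prop:routeE-splice} makes $T_1$ a single $m$-cycle, the Case~II return times of Proposition~\ref{prop:R1-caseII} sum to $3+(m+3)+\frac{m-6}{2}(m+2)+\frac{m-6}{2}(m+4)+2(m+6)=m^2$ exactly as in your count, and Lemma~\ref{lem:counting} concludes. There are slips only in your optional $m=10$ check and consistency remark, and they are harmless because your general count already covers $m=10$ (with $m/2-3=2$ lanes in each generic family): the return times on lanes $0,\dots,9$ are $1,1,13,14,12,14,12,16,1,16$, whereas your list has three $16$'s and one $14$ and actually sums to $102$, and the second block mixes the odd classes $\equiv 3$ and $\equiv 1 \pmod 6$ rather than consisting only of the class $\equiv 3$.
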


\begin{proof}
In Case~II, Proposition~\ref{prop:routeE-splice} shows that $T_1$ is a single $m$-cycle.
Also, Proposition~\ref{prop:R1-caseII} gives
\[
\sum_{x\in \Zm}\rho_1(x)
=
\underbrace{3}_{\text{imm.}}+\underbrace{(m+3)}_{x=2}+\underbrace{\tfrac{m-6}{2}(m+2)}_{\text{gen.\ even}}+\underbrace{\tfrac{m-6}{2}(m+4)}_{\text{gen.\ odd}}+\underbrace{2(m+6)}_{\text{bdy.}}=m^2.
\]
Therefore Lemma~\ref{lem:counting} applies.
\end{proof}

\begin{corollary}\label{cor:R1-all}
For every even $m\ge 6$, the return map $R_1$ is a single $m^2$-cycle on $P_0$.
\end{corollary}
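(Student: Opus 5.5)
The statement to prove is Corollary~\ref{cor:R1-all}: for every even $m\ge 6$, $R_1$ is a single $m^2$-cycle on $P_0$. This is a case assembly, and I will not redo any orbit tracing.

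Every even $m\ge 6$ satisfies exactly one of $m\equiv 0,2\pmod 6$ or $m\equiv 4\pmod 6$. These are precisely the hypotheses of Corollary~\ref{cor:R1-caseI} and Corollary~\ref{cor:R1-caseII}. In the first class I invoke Corollary~\ref{cor:R1-caseI}; in the second I invoke Corollary~\ref{cor:R1-caseII}. Either way $R_1$ is a single $m^2$-cycle, and since the two classes exhaust all even $m\ge 6$, the statement follows.

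Each of those corollaries rests on two inputs, which I would check are in place for every admissible $m$:
\begin{enumerate}[label=\textup{(\roman*)},leftmargin=2.5em]
\item Proposition~\ref{prop:routeE-splice} shows that $T_1$ is a single $m$-cycle, because the splice permutation $(1\,2\,3)$ is a single cycle and Lemma~\ref{lem:splice-graph} applies.
\item The return-time sum $\sum_x\rho_1(x)=m^2$ holds, which feeds Lemma~\ref{lem:counting}.
\end{enumerate}

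The only place I expect friction is at the smallest moduli, where some arithmetic runs may be empty. These are $m=6,8$ in Case~I and $m=10$ in Case~II. In Case~II, for instance, the generic odd range $3\le x\le m-5$ and the even range $4\le x\le m-4$ must be interpreted correctly when $m=10$. The splice appendix explicitly allows empty runs to be omitted from the block decomposition, and Example~\ref{ex:m10-color1} checks $m=10$ directly. The return-time sums also remain valid there, since the counts $(m-6)/2$ simply become zero or small.

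So the proof is a two-line case split citing Corollaries~\ref{cor:R1-caseI} and~\ref{cor:R1-caseII}, with a remark that the small boundary values are covered by those statements as written.
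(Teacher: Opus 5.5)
Your proposal is correct and matches the paper's proof, which is exactly the combination of Corollaries~\ref{cor:R1-caseI} and~\ref{cor:R1-caseII} over the exhaustive split $m\equiv 0,2\pmod 6$ versus $m\equiv 4\pmod 6$. Your small-modulus remarks are harmless but unnecessary, since those corollaries are already stated for every $m$ in their class; also, Example~\ref{ex:m10-color1} is not an independent check, because it reads its cycle off Proposition~\ref{prop:R1-caseII}.
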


\begin{proof}
Combine Corollaries~\ref{cor:R1-caseI} and \ref{cor:R1-caseII}.
\end{proof}

\begin{corollary}\label{cor:R0-caseI}
If $m\equiv 0,2\pmod 6$, then $R_0$ is a single $m^2$-cycle.
\end{corollary}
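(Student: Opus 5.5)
The plan mirrors the proofs of Corollaries~\ref{cor:R1-caseI} and~\ref{cor:R1-caseII}. By Lemma~\ref{lem:counting}, applied with $X=P_0$, $F=R_0$ and $L=L_0=\{(x,0)\}$ in the coordinates $(x,y)=(i+2k,k)$ of Lemma~\ref{lem:R0-caseI-xy}, it suffices to check two hypotheses. First, the induced lane map $T_0$ must be a single $m$-cycle on $L_0$. Second, $\sum_{x\in\Zm}\rho_0(x)=m^2$. The coordinate change $(i,k)\mapsto(i+2k,k)$ is a bijection of $\Zm^2$ (Lemma~\ref{lem:routeE-bulkcoords}), so cycle structure is unaffected. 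Both the existence of first returns and the explicit values of $T_0$ and $\rho_0$ are supplied by Proposition~\ref{prop:R0-caseI-data}.

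For the first hypothesis, I would invoke the Case~I color-$0$ row of Proposition~\ref{prop:routeE-splice}. The blocks are $F_{0,1}=(0,m-2)$, $F_{0,2}=(1,3,\dots,m-3)$, $F_{0,3}=(2,4,\dots,m-4)$ and $F_{0,4}=(m-1)$, with splice permutation $(1\,2\,3\,4)$. As a sanity check against Proposition~\ref{prop:R0-caseI-data}:
\begin{itemize}
\item $0\mapsto m-2$ and $m-2\mapsto 1$;
\item the odd lanes advance by $x\mapsto x+2$ up to $m-3$, and then $m-3\mapsto 2$;
\item the even lanes advance up to $m-4$, and then $m-4\mapsto m-1$;
\item finally $m-1\mapsto 0$.
\end{itemize}
So Lemma~\ref{lem:splice-graph} gives the single cycle $(0,m-2,1,3,\dots,m-3,2,4,\dots,m-4,m-1)$. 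Parity works because $m$ is even, so $m-3$ is odd and $m-4$ is even.

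For the second hypothesis, I would sum the return times in Proposition~\ref{prop:R0-caseI-data}. Lane $x=0$ contributes $1$. The lanes $1\le x\le m-4$ and $x=m-1$ are $m-3$ lanes, each contributing $m-1$. Lane $m-3$ contributes $2m-3$ and lane $m-2$ contributes $2m-1$. The total is
\[
1+(m-3)(m-1)+(2m-3)+(2m-1)=m^2-4m+3+4m-3=m^2 .
\]
Lemma~\ref{lem:counting} then yields that $R_0$ is a single $m^2$-cycle.

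The only step carrying real content is the first-return data of Proposition~\ref{prop:R0-caseI-data}, which is assumed here. What remains is bookkeeping: checking that the block decomposition covers every lane exactly once, including the small case $m=6$, where $F_{0,3}=(2)$ and the generic range $1\le x\le m-5$ is just $\{1\}$. I expect this edge check to be the main (mild) obstacle.
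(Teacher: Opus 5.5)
Your proposal is correct and matches the paper's proof. The paper also takes the single-cycle property of $T_0$ from the Case~I color-$0$ row of Proposition~\ref{prop:routeE-splice}, sums the return times of Proposition~\ref{prop:R0-caseI-data} to get $1+(m-3)(m-1)+(2m-3)+(2m-1)=m^2$, and concludes with Lemma~\ref{lem:counting}; your explicit block-by-block check and the $m=6$ edge case are extra bookkeeping the paper leaves implicit, and both are right.
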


\begin{proof}
In Case~I, Proposition~\ref{prop:routeE-splice} shows that $T_0$ is a single $m$-cycle.
Also Proposition~\ref{prop:R0-caseI-data} gives
\[
\sum_{x\in \Zm}\rho_0(x)=\underbrace{1}_{x=0}+\underbrace{(m-3)(m-1)}_{\text{generic}}+\underbrace{(2m-3)}_{x=m{-}3}+\underbrace{(2m-1)}_{x=m{-}2}=m^2.
\]
Lemma~\ref{lem:counting} now applies.
\end{proof}

\begin{corollary}\label{cor:R0-caseII}
If $m\equiv 4\pmod 6$, then $R_0$ is a single $m^2$-cycle.
\end{corollary}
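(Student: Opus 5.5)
The plan mirrors the proof of Corollary~\ref{cor:R0-caseI}: we verify the two hypotheses of Lemma~\ref{lem:counting} for $F=R_0$ on $X=P_0\cong\Zm^2$ (so $N=m^2$), with transversal $L=L_0=\{(x,0):x\in\Zm\}$ and the first-return data of Proposition~\ref{prop:R0-caseII-data}.

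For hypothesis (i), that $T_0$ is a single $m$-cycle on $L_0$, we invoke Proposition~\ref{prop:routeE-splice}. In Case~II ($m\equiv 4\pmod 6$) it splits according to $m\bmod 12$. In both rows $m\equiv 10$ and $m\equiv 4\pmod{12}$, the lane set is partitioned into four ordered arithmetic family-blocks, advanced by the generic rule $x\mapsto x+4$. The block terminals are spliced by $\pi_0=(1\,2\,3\,4)$, so Lemma~\ref{lem:splice-graph} gives a single $m$-cycle. No new argument is needed here beyond citing that row of the splice table.

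For hypothesis (ii), we sum the return times listed in Proposition~\ref{prop:R0-caseII-data}, grouped as follows:
\begin{itemize}
\item $x=0$ contributes $1$;
\item the lanes $1\le x\le m-7$ together with $x=m-4$ form $m-6$ lanes with $\rho_0=m-2$;
\item the lanes $x=m-6$ and $x=m-2$ each contribute $2m-4$;
\item the lanes $x=m-5$ and $x=m-1$ each contribute $m-1$;
\item $x=m-3$ contributes $2m-3$.
\end{itemize}
These groups cover each $x\in\Zm$ exactly once, since $m\ge 10$ makes the listed ranges disjoint. The total is
\[
1+(m-6)(m-2)+2(2m-4)+2(m-1)+(2m-3)=m^2-8m+12+8m-12=m^2 .
\]
With both hypotheses in place, Lemma~\ref{lem:counting} yields that $R_0$ is a single $m^2$-cycle.

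The only real point of care is the bookkeeping in that sum. One must make sure the exceptional lanes $m-6,\dots,m-1$ are not double-counted with the generic range $1\le x\le m-7$, and that $x=m-4$, whose return time $m-2$ agrees with the generic one, is counted exactly once. All substantive difficulty (the orbit traces and the splice structure) has already been absorbed into Propositions~\ref{prop:R0-caseII-data} and~\ref{prop:routeE-splice}.
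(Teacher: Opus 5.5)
Your proposal is correct and is essentially the paper's proof. The paper likewise cites Proposition~\ref{prop:routeE-splice} for $T_0$ being a single $m$-cycle, sums the return times of Proposition~\ref{prop:R0-caseII-data} with exactly your grouping $1+(m-6)(m-2)+2(2m-4)+2(m-1)+(2m-3)=m^2$ (the paper's ``generic'' term silently includes $x=m-4$, which you make explicit), and concludes by Lemma~\ref{lem:counting}.
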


\begin{proof}
In Case~II, Proposition~\ref{prop:routeE-splice} shows that $T_0$ is a single $m$-cycle.
Also Proposition~\ref{prop:R0-caseII-data} gives
\[
\sum_{x\in \Zm}\rho_0(x)=\underbrace{1}_{x=0}+\underbrace{(m-6)(m-2)}_{\text{generic}}+\underbrace{2(2m-4)}_{x\in\{m{-}6,m{-}2\}}+\underbrace{2(m-1)}_{x\in\{m{-}5,m{-}1\}}+\underbrace{(2m-3)}_{x=m{-}3}=m^2.
\]
Lemma~\ref{lem:counting} finishes the proof.
\end{proof}

\begin{corollary}\label{cor:R0-all}
For every even $m\ge 6$, the return map $R_0$ is a single $m^2$-cycle on $P_0$.
\end{corollary}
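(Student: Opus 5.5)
The statement is the even-$m$ completion for color~$0$. The plan mirrors Corollary~\ref{cor:R1-all}: every even $m\ge 6$ lies in exactly one of the two congruence classes $m\equiv 0,2\pmod 6$ (Case~I) or $m\equiv 4\pmod 6$ (Case~II). In Case~I I will invoke Corollary~\ref{cor:R0-caseI}, and in Case~II Corollary~\ref{cor:R0-caseII}. Each of these already concludes that $R_0$ is a single $m^2$-cycle on $P_0$, so the corollary follows by this case split.

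To make sure nothing is hidden in that split, I will recheck the two inputs each case corollary feeds into Lemma~\ref{lem:counting}. The first input is that $T_0$ is a single $m$-cycle on $L_0$. This comes from Proposition~\ref{prop:routeE-splice} with splice permutation $\pi_0=(1\,2\,3\,4)$ and Lemma~\ref{lem:splice-graph}. In Case~II this is further split according to $m\bmod 12$, namely $m\equiv 4$ or $10\pmod{12}$; together these exhaust $m\equiv 4\pmod 6$. The second input is the return-time identity $\sum_x\rho_0(x)=m^2$. This comes from Propositions~\ref{prop:R0-caseI-data} and~\ref{prop:R0-caseII-data}. For example, in Case~I,
\[
1+(m-3)(m-1)+(2m-3)+(2m-1)=m^2 .
\]
Lemma~\ref{lem:counting} then upgrades the first-return statement on $L_0$ to a single $m^2$-cycle of $R_0$ on $P_0\cong\Zm^2$. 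This uses that $\Phi_0$ is a bijection (Lemma~\ref{lem:routeE-bulkcoords}), so working in the $(x,y)$ frame does not change cycle structure.

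The only delicate point is small $m$, where some arithmetic runs in the block decompositions are empty. The relevant cases are $m=6$ in Case~I and $m=10$ (so $m\equiv 10\pmod{12}$) in Case~II; $m=16$ is the first instance of $m\equiv 4\pmod{12}$. For these I will confirm that the generic ranges $1\le x\le m-5$ and $3\le x\le m-7$ degenerate consistently. Empty runs are omitted, as the paper stipulates after Lemma~\ref{lem:splice-graph}. If a doubt remains, I will list $T_0$ explicitly for $m=6$ and $m=10$ to confirm a single cycle. Apart from this bookkeeping, the proof is a direct citation of the two case corollaries.
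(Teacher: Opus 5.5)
Your proposal is correct and follows the paper's proof: the paper proves this corollary in one line by combining Corollaries~\ref{cor:R0-caseI} and~\ref{cor:R0-caseII}, which together cover every even $m\ge 6$ through the split $m\equiv 0,2\pmod 6$ versus $m\equiv 4\pmod 6$. Your recheck of the two inputs to Lemma~\ref{lem:counting} (the $(1\,2\,3\,4)$ splice and the return-time sums, including the small cases $m=6$ and $m=10$) agrees with the paper's data, but the corollary itself does not need it.
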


\begin{proof}
Combine Corollaries~\ref{cor:R0-caseI} and \ref{cor:R0-caseII}.
\end{proof}

\section{An explicit decomposition for \texorpdfstring{$m=4$}{m=4}}\label{app:m4}

For $m=4$, define a direction assignment by specifying at each vertex $(i,j,k)\in(\mathbb Z_4)^3$ a direction triple $(d_0,d_1,d_2)\in\{0,1,2\}^3$, where $0,1,2$ correspond respectively to the directions $+i,+j,+k$.
The entry in the table for fixed $i$ and coordinates $(j,k)$ is the three-digit word $d_0d_1d_2$.

\begin{center}
\begin{minipage}[t]{0.48\linewidth}\centering\small
$i=0$\\[2pt]
\begin{tabular}{c|cccc}
\diagbox{j}{k} & 0 & 1 & 2 & 3\\\hline
0 & \texttt{210} & \texttt{012} & \texttt{120} & \texttt{021}\\
1 & \texttt{201} & \texttt{021} & \texttt{120} & \texttt{210}\\
2 & \texttt{120} & \texttt{012} & \texttt{201} & \texttt{210}\\
3 & \texttt{201} & \texttt{201} & \texttt{210} & \texttt{102}\\
\end{tabular}
\end{minipage}
\hfill
\begin{minipage}[t]{0.48\linewidth}\centering\small
$i=1$\\[2pt]
\begin{tabular}{c|cccc}
\diagbox{j}{k} & 0 & 1 & 2 & 3\\\hline
0 & \texttt{120} & \texttt{210} & \texttt{120} & \texttt{210}\\
1 & \texttt{102} & \texttt{021} & \texttt{201} & \texttt{012}\\
2 & \texttt{021} & \texttt{201} & \texttt{210} & \texttt{120}\\
3 & \texttt{210} & \texttt{201} & \texttt{012} & \texttt{201}\\
\end{tabular}
\end{minipage}

\vspace{1em}
\begin{minipage}[t]{0.48\linewidth}\centering\small
$i=2$\\[2pt]
\begin{tabular}{c|cccc}
\diagbox{j}{k} & 0 & 1 & 2 & 3\\\hline
0 & \texttt{021} & \texttt{210} & \texttt{201} & \texttt{021}\\
1 & \texttt{012} & \texttt{201} & \texttt{120} & \texttt{210}\\
2 & \texttt{210} & \texttt{120} & \texttt{210} & \texttt{102}\\
3 & \texttt{102} & \texttt{102} & \texttt{012} & \texttt{210}\\
\end{tabular}
\end{minipage}
\hfill
\begin{minipage}[t]{0.48\linewidth}\centering\small
$i=3$\\[2pt]
\begin{tabular}{c|cccc}
\diagbox{j}{k} & 0 & 1 & 2 & 3\\\hline
0 & \texttt{021} & \texttt{201} & \texttt{012} & \texttt{120}\\
1 & \texttt{210} & \texttt{210} & \texttt{120} & \texttt{021}\\
2 & \texttt{201} & \texttt{021} & \texttt{201} & \texttt{210}\\
3 & \texttt{201} & \texttt{120} & \texttt{201} & \texttt{210}\\
\end{tabular}
\end{minipage}
\end{center}

The direction table above is the finite witness.
For print readability, the full $64$-term color cycles are not reproduced here; they are stored in the supplementary files \texttt{m4\_witness.json} and \texttt{m4\_cycle\_lists.txt}.
Those files record the three color orbits obtained by iterating from $(0,0,0)$, and the script \texttt{verify\_m4\_witness.py} automates the finite audit:
\begin{enumerate}[leftmargin=2.2em]
\item it checks that every table entry is a permutation of $(0,1,2)$;
\item it follows each color from $(0,0,0)$ using the table and verifies that the orbit has length $64$ before returning;
\item it confirms that the stored machine-readable data and the human-readable cycle lists agree with those computed directly from the table.
\end{enumerate}
Because $D_3(4)$ has only $64$ vertices, this finite verification is completely transparent: the printed table is the human-readable witness, while the supplementary files preserve the full archival orbit data.

\section{Verification artifacts}\label{app:verification}

The verification artifacts focus on the even case $m\ge 6$, which involves the longest case analysis.
The odd construction is given by a short closed form in the main text.
For the boundary case $m=4$, the same directory includes the finite witness \texttt{m4\_witness.json} and its checker \texttt{verify\_m4\_witness.py}.

The main script \texttt{route\_e\_even.py} checks that, for every even $m$ with $6\le m\le 60$:
\begin{enumerate}[leftmargin=2.2em]
\item the resulting direction assignment is a valid coloring (each color is a permutation and the three outgoing arcs at every vertex are distinct);
\item the return maps on $P_0$ agree with the displayed piecewise formulas;
\item the induced return maps on $P_0$ are single $m^2$-cycles;
\item each color permutation is a single Hamilton cycle.
\end{enumerate}
Two auxiliary scripts target the most verification-heavy steps in the appendices:
\begin{enumerate}[leftmargin=2.2em,label=(\alph*)]
\item \texttt{routee\_return\_formula\_tables\_check.py} checks the partition sets, low-layer words, and return-map formulas of Proposition~\ref{prop:routeE-return-formulas};
\item \texttt{routee\_first\_return\_check.py} checks the appendix first-return formulas against the actual return-map dynamics on the stated transversals.
\end{enumerate}
For larger even $m$, the main script can also validate the construction by checking only the closed-form first-return maps and return-time sums.
These computations are not part of the proofs in the present paper; they serve as independent verification of the even-case construction and as regression checks for the displayed closed-form formulas.


\begin{thebibliography}{99}

\bibitem{Kotzig1973}
A.~Kotzig,
\newblock Every Cartesian product of two circuits is decomposable into two Hamiltonian circuits,
\newblock Centre de Recherches Math\'ematiques, Montr\'eal, Rapport 233 (1973).

\bibitem{Foregger1978}
M.~F.~Foregger,
\newblock Hamiltonian decompositions of products of cycles,
\newblock {\em Discrete Mathematics} 24 (1978), 251--260.

\bibitem{Bermond1978}
J.-C.~Bermond,
\newblock Hamilton decomposition of graphs, directed graphs and hypergraphs,
\newblock in {\em Advances in Graph Theory}, Annals of Discrete Mathematics 3 (1978), 21--28.

\bibitem{AlspachBermondSotteau1990}
B.~Alspach, J.-C.~Bermond, and D.~Sotteau,
\newblock Decomposition into cycles I: Hamilton decompositions,
\newblock in {\em Cycles and Rays} (G.~Hahn, G.~Sabidussi, R.~E.~Woodrow, eds.),
NATO ASI Series, vol.~301, Springer, 1990, pp.~9--18.

\bibitem{Stong1991}
R.~Stong,
\newblock Hamilton decompositions of Cartesian products of graphs,
\newblock {\em Discrete Mathematics} 90 (1991), 169--190.

\bibitem{WitteGallian1984}
D.~Witte and J.~A.~Gallian,
\newblock A survey: Hamiltonian cycles in Cayley graphs,
\newblock {\em Discrete Mathematics} 51 (1984), 293--304.

\bibitem{CurranWitte1985}
S.~J.~Curran and D.~Witte,
\newblock Hamilton paths in Cartesian products of directed cycles,
\newblock in {\em Cycles in Graphs}, Annals of Discrete Mathematics 27 (1985), 35--74.

\bibitem{TrotterErdos1978}
W.~T.~Trotter and P.~Erd\H{o}s,
\newblock When the Cartesian product of directed cycles is Hamiltonian,
\newblock {\em Journal of Graph Theory} 2 (1978), 137--142.

\bibitem{CurranGallian1996}
S.~J.~Curran and J.~A.~Gallian,
\newblock Hamiltonian cycles and paths in Cayley graphs and digraphs --- a survey,
\newblock {\em Discrete Mathematics} 156 (1996), 1--18.

\bibitem{Keating1985}
K.~Keating,
\newblock Multiple-ply Hamiltonian graphs and digraphs,
\newblock in {\em Cycles in Graphs}, {\em Annals of Discrete Mathematics} 27 (1985), 81--88.

\bibitem{Bogdanowicz2017}
Z.~R.~Bogdanowicz,
\newblock On decomposition of the Cartesian product of directed cycles into cycles of equal lengths,
\newblock {\em Discrete Applied Mathematics} 229 (2017), 148--150.

\bibitem{Bogdanowicz2020}
Z.~R.~Bogdanowicz,
\newblock Identifying Hamilton cycles in the Cartesian product of directed cycles,
\newblock {\em AKCE International Journal of Graphs and Combinatorics} 17 (2020), no.~1, 534--538.

\bibitem{DarijaniMiraftabMorris2022}
I.~Darijani, B.~Miraftab, and D.~Witte Morris,
\newblock Arc-disjoint Hamiltonian paths in Cartesian products of directed cycles,
\newblock {\em Ars Mathematica Contemporanea} 25 (2025), no.~2, P2.10.

\bibitem{Mohar2006}
B.~Mohar,
\newblock Kempe equivalence of colorings,
\newblock in {\em Graph Theory in Paris}, Trends in Mathematics, Birkh\"auser, 2006, pp.~287--297.

\bibitem{StiebitzEtAl2012}
M.~Stiebitz, D.~Scheide, B.~Toft, and L.~M.~Favrholdt,
\newblock {\em Graph Edge Coloring: Vizing's Theorem and Goldberg's Conjecture},
\newblock Wiley Series in Discrete Mathematics and Optimization, Wiley, 2012.

\bibitem{Knuth2026}
D.~E.~Knuth,
\newblock Claude's cycles,
\newblock Preprint, March 2026.
\newblock \url{https://www-cs-faculty.stanford.edu/~knuth/papers/claude-cycles.pdf}.

\bibitem{AquinoMichaels2026}
K.~Aquino-Michaels,
\newblock Completing Claude's cycles,
\newblock Preprint, March 2026.
\newblock \url{https://github.com/no-way-labs/residue}.

\end{thebibliography}
\end{document}